\newtheorem{thm}[subsection]{Theorem}
\newtheorem{thm/def}[subsection]{Theorem/Definition}
\newtheorem{cor}[subsection]{Corollary}
\newtheorem{lem}[subsection]{Lemma}
\newtheorem{prop}[subsection]{Proposition}
\theoremstyle{definition}
\newtheorem{defn}[subsection]{Definition}
\theoremstyle{definition}
\theoremstyle{definition}
\newtheorem{rem}[subsection]{Remark}
\newtheorem{example}[subsection]{Example}
\numberwithin{equation}{subsection}
\newtheorem{pg}[subsection]{}
\newcommand{\mc}{\mathcal }
\newcommand{\Sp}{\text{\rm Spec}}
\newcommand{\Hom}{\underline {\text{\rm Hom}}}
\newcommand{\mls}{\mathscr}
\newcommand{\sEnd}{\mathscr{E}\!{\it nd}}
\newcommand{\Proj}{\underline{\mathrm{Proj}}}
\newcommand{\Spec}{ \underline {\Sp }}
\newcommand{\Gm}{\mathbf{G}_m}
\newcommand{\Ecan}[1]{\mls E^{\mathrm{can}}}
\newcommand{\QP}{\mls{QP}}
\newcommand{\bmu}{\boldsymbol{\mu}}
\begin{document}

\title{Ample vector bundles and moduli of tame stacks}
\author{Daniel Bragg,  Martin Olsson, Rachel Webb}

\begin{abstract}
We explain how to define an embedding of a tame stack over a noetherian ring into a certain generalization of a weighted projective stack using a notion of ample vector bundle on the stack. 
As applications we construct algebraic moduli stacks of tame stacks equipped with an ample vector bundle and algebraic stacks of tame orbicurves.
\end{abstract}

\maketitle

\section{Introduction}

\subsection{Overview} The study of embeddings in, and more generally maps to, projective spaces is fundamental to our understanding of algebraic varieties.  Our aim in this article is to explain analogous constructions for tame stacks.   There are two basic questions to be addressed:
\begin{enumerate}
    \item [(i)] What should replace projective spaces in the theory of tame stacks?  
    \item [(ii)] How do we describe embeddings in such stacks in terms of line bundles, or perhaps vector bundles, as in the classical theory for schemes?
\end{enumerate}

An answer to these questions for cyclotomic stack has been given by Abramovich--Hassett \cite{AH} (a stack is \emph{cyclotomic} if its stabilizers are of the form $\bmu_n$). In this case the replacement for projective spaces are the weighted projective stacks
\[
    [\mathbf{A}^{n+1}-\{0\}/_{\underline \alpha }\mathbf{G}_m] \quad \quad \quad \text{where} \quad u \cdot (x_0, \ldots, x_n) = (u^{\alpha _0}x_0, \dots, u^{\alpha _n}x_n)
\]
for $u \in \mathbf{G}_m$ and a sequence of positive integers $\underline{\alpha}=(\alpha_0, \ldots, \alpha_n)$. In \cite[2.1.3 and 2.4.4]{AH} maps to such stacks are discussed in terms of line bundles and sections. We also note work of El Haloui \cite{haloui2016ample}, who considered related notions using graded commutative algebra, and work of Voight and Zureick-Brown \cite{MR4403928} studying the geometry of stacky curves through their canonical rings. 
A weighted projective stack is cyclotomic, as is any substack. Thus, to move beyond cyclotomic stacks, a general answer to (i) must replace weighted projective stacks with a more general class of stacks. Moreover, if $\mc X$ is an algebraic stack and $\mc L$ is a line bundle such that the action of the stabilizer group schemes on the fibers of $\mc L$ is faithful then the stack $\mc X$ is necessarily cyclotomic. It follows that a general answer to (ii) must replace line bundles with vector bundles. 

For Deligne--Mumford stacks over a field, questions (i) and (ii) have been studied by Kresch \cite{MR2483938}. Our approach has something in common with his, but differs in our emphasis on the role of an ample vector bundle and in our construction of the resulting embeddings. We track the data of the bundle and the embedding so that we can we apply our theory to construct moduli of tame stacks equipped with ample vector bundles. In the case of one-dimensional stacks with generically trivial stabilizer, we are able to forget the data of the bundle and construct a moduli stack of tame orbicurves.

\subsection{Ample vector bundles} \label{P:1.2} Let $R$ be a noetherian ring and let $\mls X/R$ be a finite type tame Artin stack with finite diagonal over the base (see \cite{tame} for the notion of tame Artin stack; in characteristic $0$, the stacks considered here are simply finite type separated Deligne--Mumford stacks). Let $\mls E$ be a vector bundle over $\mls X$. For each geometric point $\bar x\rightarrow \mls X$ we have an action of the stabilizer group scheme $G_{\bar x}$ on the fiber $\mls E(\bar x):= \bar x^*\mls E$. Let $\pi: \mls X \to X$ be the coarse moduli space map and let $S^n\mls E$ denote the $n$-th symmetric power of $\mls E$. 

\begin{defn}\label{D:ampledef}  A vector bundle $\mls E$ over $\mls X$ is 

(i)  \emph{faithful} if for every geometric point $\bar x\rightarrow \mls X$ the representation $\mls E(\bar x)$ of $G_{\bar x}$ is faithful.

(ii)  \emph{$H$-ample} if it is faithful and for every coherent sheaf $\mls F$ on $\mls X$ there exists an integer $n_0$ such that $\pi _*(\mls F\otimes S^n\mls E)$ is generated by global sections for all $n\geq n_0$. 

(iii) \emph{det-ample} if it is faithful and for some integer $N>0$ the line bundle $\text{\rm det}(\mls E)^{\otimes N}$ descends to an ample invertible sheaf on the coarse space $X$.
\end{defn}

\begin{rem} The $H$ in  ``$H$-ample'' stands for Hartshorne, in reference to the definition of ample vector bundle in \cite{Hartshorneample}. In the case of schemes an $H$-ample vector bundle is also $\det$-ample by \cite[2.6]{Hartshorneample}. On stacks an $H$-ample bundle which is also \textit{generating} is det-ample (see \ref{L:3.2}), and for line bundles the two notions of ampleness are equivalent (see \ref{E:cyclotomic}). However in general we don't know if $H$-ample implies det-ample. In the converse direction, if a stack admits a $\det $-ample vector bundle then it also admits a vector bundle which is both $\det $- and $H$-ample \ref{cor:4.6}. 
\end{rem}

\begin{rem}
    It is tempting to replace the condition in (ii) with the condition that $\mls F\otimes S^n\mls E$ is generated by global sections.  However, this does not give a good notion since it implies that the adjunction map $\pi ^*\pi _*(\mls F\otimes S^n\mls E)\rightarrow \mls F\otimes S^n\mls E$ is surjective, which  in turn implies that the stabilizer action on $\mls F\otimes S^n\mls E$ is trivial at every point. Since $\mls X$ is tame this implies that $\mls F\otimes S^n\mls E$ descends to the coarse space.  So, for example, if $\mls E-\mls L$ is a line bundle and $\mls F = \mls O_{\mls X}$, the condition that $\mls L^{\otimes n}$ is generated by global sections for $n$ sufficiently large implies that the stabilizer action on $\mls L$ is trivial and therefore $\mls L$ descends to the coarse space.
\end{rem}





We study these notions of ampleness in detail in Section \ref{S:ample}.


\subsection{Immersions from ample vector bundles}
\label{P:1.9}  Next let us explain the stacks that will play the role of projective spaces.
 Let $R$ be a noetherian ring and 
let $V$ be a finitely generated  $R$-module with a left polynomial $\mathbf{GL}_{r}$-action (defined in \ref{def:polynomial representation}). We have an associated right action of $\mathbf{GL}_r$ on $\mathbf{A}_V := \Sp_R(S^\bullet V)$.  Let $\det:\mathbf{GL}_r\to\mathbf{G}_m$ denote the determinant character. By \ref{T:GIT} there is an associated stable locus $\mathbf{A}^{s,\; \det}_{V}$, open in $\mathbf{A}_{V}$, such that the stack quotient $[\mathbf{A}^{s,\; \det }_{V}/\mathbf{GL}_r]$ has finite diagonal. By \cite[Proposition 3.6]{tame} there is a maximal tame open substack of this quotient.

\begin{defn}\label{def:qp}
We let $\QP(V)$ denote the maximal tame open substack of $[\mathbf{A}^{s,\; \det }_{V}/\mathbf{GL}_r]$. (The letters $\QP$ stand for ``quasi-projective.'')
\end{defn}

By construction, the stack $\QP(V)$ is a finite type tame Artin stack with finite diagonal and quasi-projective coarse moduli space $QP(V)$ (quasi-projectivity of the coarse space follows from \ref{C:B.12}). 
Since $\QP(V)$ is constructed as a $\mathbf{GL}_r$-quotient it carries a canonical faithful rank-$r$ vector bundle, which by  \ref{rem:git-detample} is, in fact,  $\det$-ample.


\begin{rem} 
If the ring $R$ is equal to a field $k$ then $QP(V)$ is an open subscheme of the twisted affine GIT quotient of $\mathbf{A}_V$ by $\mathbf{GL}_r$ with respect to the character $\det$. In particular, if $\mathbf{A}_V^{ss, \det } = \mathbf{A}_V^{s, \det }$, if $k$ has characteristic $0$,  and if the affine quotient equals $\Sp (k)$, then $QP(V)$ and hence also $\QP (V)$ are proper over $R$.

\end{rem}

Our results concerning ample vector bundles and embeddings are summarized by the following (compare with \cite[Thm~5.3]{MR2483938}).

\begin{thm}\label{cor:4.6}
Let $\mls X$ be a finite type tame Artin stack with finite diagonal over a noetherian ring $R$. The following are equivalent:
\begin{enumerate}
\item $\mls X$ is globally the quotient of an $R$-scheme by $\mathbf{GL}_{r, R}$ for some $r$ and has quasi-projective coarse moduli space.
\item $\mls X$ admits a vector bundle that is both $H$-ample and $\det$-ample.
\item $\mls X$ admits a $\det$-ample vector bundle.
\item $\mls X$ admits an immersion into a stack of the form $\QP(V)$.
\end{enumerate}
\end{thm}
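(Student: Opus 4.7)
The plan is to prove the theorem by establishing the chain of implications $(2) \Rightarrow (3) \Rightarrow (1) \Rightarrow (4) \Rightarrow (2)$. Only $(2) \Rightarrow (3)$ is immediate from the definitions.

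For $(3) \Rightarrow (1)$, given a $\det$-ample vector bundle $\mls E$ of rank $r$, form the frame bundle $P = \mathrm{Fr}(\mls E)$. This is a $\mathbf{GL}_r$-torsor over $\mls X$ giving a presentation $\mls X \cong [P/\mathbf{GL}_r]$, and since $\mls E$ is faithful the stabilizers act freely on $P$, so $P$ is representable by an algebraic space. The $\det$-ample hypothesis means that some power $\det(\mls E)^{\otimes N}$ descends to an ample line bundle $\mls L$ on the coarse moduli space $X$, so $X$ is quasi-projective over $R$. One then promotes $P$ from an algebraic space to a scheme via the factorization $P \to \mathrm{Fr}(\det(\mls E)) \to X$: quasi-affineness of the $\mathbf{G}_m$-torsor of $\mls L$ (from ampleness) together with affineness of $P \to \mathrm{Fr}(\det(\mls E))$ exhibits $P$ as quasi-projective over $X$.

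The heart of the argument is $(1) \Rightarrow (4)$: given a presentation $\mls X = [U/\mathbf{GL}_r]$ with $U$ an $R$-scheme and quasi-projective coarse space $X$, we construct an equivariant immersion of $U$ into an affine space $\mathbf{A}_V$ for some polynomial $\mathbf{GL}_r$-representation $V$, landing in the stable locus $\mathbf{A}_V^{s,\det}$ and inducing an immersion into the tame open substack $\QP(V)$. The tautological rank-$r$ bundle $\mls E$ on $\mls X$ coming from the presentation is automatically faithful; modifying it appropriately (for instance by tensoring with pullbacks of ample line bundles from the coarse space) one arranges it also to be $H$-ample. Global sections of $\pi_* S^n \mls E$ for sufficiently large $n$ then assemble, using the natural $\mathbf{GL}_r$-action on $S^n$, into a polynomial $\mathbf{GL}_r$-representation $V^\vee$ together with a surjection $\pi^* V^\vee \twoheadrightarrow S^n \mls E$. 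This produces an equivariant morphism $U \to \mathbf{A}_V$. One verifies it is an immersion using that $H$-ampleness provides enough sections to separate points and tangent vectors (combined with the finite diagonal hypothesis), that the image lies in $\mathbf{A}_V^{s,\det}$ via the polarization coming from $\det$-ampleness on $X$, and that the induced quotient map factors through $\QP(V)$ because $\mls X$ is itself tame.

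For $(4) \Rightarrow (2)$, pull back the canonical faithful rank-$r$ vector bundle on $\QP(V)$, which is both $H$-ample and $\det$-ample by construction. Both properties are preserved under immersions of tame Artin stacks with finite diagonal: ampleness of the descended determinant line bundle on the coarse space restricts to locally closed subschemes, and the generation condition on symmetric powers pulls back along the induced morphism of coarse spaces. The main obstacle is the construction in $(1) \Rightarrow (4)$: producing the polynomial $\mathbf{GL}_r$-representation $V$ and verifying that the resulting equivariant morphism from $U$ is an immersion landing in the stable tame locus of $\mathbf{A}_V$. This requires combining the Hartshorne-style ampleness theory developed in Section \ref{S:ample} with the GIT construction of $\QP(V)$ and careful bookkeeping to ensure GIT-stability, separation of points and tangent vectors, and tameness of the image all simultaneously.
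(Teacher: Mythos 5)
Your overall architecture (a cycle through the four conditions, with the embedding as the hard step) matches the paper's, but the central construction in your $(1)\Rightarrow(4)$ step contains a genuine error. You propose to find a polynomial representation $V$ with a surjection $\pi^*V^\vee \twoheadrightarrow S^n\mls E$ coming from global sections of $\pi_*S^n\mls E$. Any such map factors through the adjunction $\pi^*\pi_*(S^n\mls E)\rightarrow S^n\mls E$, and surjectivity of that adjunction forces the stabilizer action on $S^n\mls E$ to be trivial at every point --- this is exactly the pitfall flagged in the remark following \ref{D:ampledef}. For a faithful $\mls E$ on a stack with nontrivial stabilizers (say $\bmu_3$ acting on its regular representation), $S^n\mls E$ carries a nontrivial stabilizer action for every $n\geq 1$, so no such surjection exists and no amount of $H$-ampleness produces one. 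This is why the paper's construction (\ref{lem:find}, \ref{prop:embed}) never tries to generate $S^n\mls E$ on the stack: it instead works on the coarse space, choosing $m_0$ so that the algebra $\oplus_n\pi_*S^n(\mls E^{\oplus r})$ is generated in degrees $\leq m_0$, embedding $X$ into $\mathbf{P}(V_1)$ via the ample $\mls O_X(1)$ descended from $\det(\mls E^{\oplus r})^{\otimes N}$, and taking $V_2$ to be sections of the \emph{twists} $\pi_*S^m(\mls E^{\oplus r})\otimes\mls O_X(1)$; the immersion of the frame bundle $\mathbf{I}$ is then an algebra-generation statement over each $D_+(s)$, not a separation-of-points argument, and only $\det$-ampleness (not $H$-ampleness) is needed. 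Note also that the paper runs the cycle as $(1)\Rightarrow(2)\Rightarrow(3)\Rightarrow(4)\Rightarrow(1)$, so the embedding is built from $(3)$, which is strictly easier than your $(1)\Rightarrow(4)$.

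Two secondary gaps. In $(4)\Rightarrow(2)$ you assert that $\mls E_{\QP(V)}$ is $H$-ample ``by construction''; the paper only establishes $\det$-ampleness (\ref{rem:git-detample}), and $(2)$ is instead obtained from $(1)$ by tensoring a faithful bundle with a high power of the pullback of an ample line bundle on the coarse space (\cite[5.5]{OS03} plus \ref{E:3.6}). In $(3)\Rightarrow(1)$ your argument that the frame bundle $P$ is a scheme routes through $\mathrm{Fr}(\det\mls E)$, but that object is a $\mathbf{G}_m$-torsor over $\mls X$ and is in general a stack (the stabilizers need not act faithfully on $\det\mls E$), and the composite $P\rightarrow X$ is not obviously quasi-affine; the conclusion is correct (since $P$ is affine over $\mls X$ and is its own coarse space, it is affine over $X$), but the paper sidesteps the issue entirely by deducing $(1)$ from $(4)$, restricting the presentation of $\QP(V)$ whose atlas is already a locally closed subscheme of $\mathbf{A}_V$.
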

The way in which a det-ample vector bundle defines an immersion is often important for applying \ref{cor:4.6}. The construction of this immersion is explained in Section \ref{S:section4}.
A relative version of ample vector bundles and \ref{cor:4.6} are discussed in Section \ref{S:section6}.




\begin{rem} Our construction of the immersion in \ref{cor:4.6} is related to  the argument of \cite[Proof of 5.3]{MR2483938}.
Two key differences are that the construction in \cite[Proof of 5.3]{MR2483938} embeds $\mls X$ into a stack of the form $[\mathbf{P}(V)/\mathbf{GL}_r]$ for some projective space $\mathbf{P}(V)$ with an action of $\mathbf{GL}_r$, whereas our embedding lands in a stack of the form $[\mathbf{A}_V/\mathbf{GL}_r]$; and that we keep track of the spaces of sections defining this embedding.
\end{rem}

\subsection{Moduli of tame stacks}
We apply our theory of ample vector bundles to construct various moduli of tame stacks. 
If $T$ is a scheme, let $\mls S_r(T)$ be the category of tame stacks over $T$ equpped with a rank $r$ det-ample vector bundle: objects of $\mls S_r(T)$ are pairs $(\mls X, \mls E)$ where $\mls X$ is a proper flat tame Artin stack over $T$ and $\mls E$ is a vector bundle of rank $r$ on $\mls X$ that is det-ample in every geometric fiber. Morphisms are isomorphism classes of pairs of isomorphisms $(f: \mls X \xrightarrow{\sim} \mls X', f^*\mls E' \xrightarrow{\sim} \mls E)$. Let $\mls S_r$ denote the fibered category over schemes whose fiber over $T$ is given by $\mls S_r(T)$. In Section \ref{S:stackofstacks} we prove the following.
\begin{thm}[Theorem \ref{T:5.1}]
$\mls S_r$ is an algebraic stack.
\end{thm}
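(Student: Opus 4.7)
I verify the three standard criteria: $\mls S_r$ is a stack, the diagonal is representable by algebraic spaces, and there is a smooth surjection from an algebraic stack. The presentation will come from the Hilbert stacks of the $\QP(V)$, with \ref{cor:4.6} providing the crucial input.

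Stack descent for $\mls S_r$ is routine: properness, flatness, and tameness of Artin stacks descend in the fpqc topology, vector bundles descend, and det-ampleness is a geometric-fiberwise condition. For the diagonal, given two objects $(\mls X, \mls E)$ and $(\mls X', \mls E')$ over $T$, the Isom stack forgets onto $\underline{\mathrm{Isom}}_T(\mls X, \mls X')$, which is an algebraic space by Olsson's theorem on Hom stacks for proper flat tame Artin stacks. The fiber over a stack isomorphism $f$ is an open subscheme of the affine scheme $\Hom_T(\mls E, f^*\mls E')$, affine because $\mls X$ is proper and flat over $T$. Hence the diagonal is representable by algebraic spaces.

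For the smooth cover, for each finitely generated polynomial $\mathbf{GL}_r$-representation $V$, let $\mls H_V$ be the stack whose $T$-points are proper flat tame closed substacks $i: \mls X \hookrightarrow \QP(V) \times_R T$ for which $i^*\mls E_V$ is det-ample on geometric fibers. Since $\QP(V)$ has finite diagonal and quasi-projective coarse moduli, its Hilbert stack is algebraic (e.g.\ by Hall--Rydh), and the conditions (flatness, properness, tameness, det-ampleness on geometric fibers) cut out an open substack, so $\mls H_V$ is itself algebraic. There is a forgetful map $\Phi_V: \mls H_V \to \mls S_r$ given by $i \mapsto (\mls X, i^*\mls E_V)$. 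By \ref{cor:4.6}, every object of $\mls S_r$ is in the image of some $\Phi_V$, so the union $\bigsqcup_V \mls H_V \to \mls S_r$ is surjective.

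The main obstacle is verifying smoothness of $\Phi_V$ for sufficiently many $V$. Unwinding the infinitesimal criterion, smoothness at an embedding $i: \mls X \hookrightarrow \QP(V)_T$ amounts to extending $i$ across a square-zero thickening $T \hookrightarrow T'$, given any extension $(\mls X', \mls E')$ of $(\mls X, \mls E)$ in $\mls S_r(T')$. Since $\QP(V) \subset [\mathbf{A}_V / \mathbf{GL}_r]$, a map to $\QP(V)$ with prescribed bundle pullback is encoded by a section $s$ of the vector bundle $V(\mls E)$ associated to $V$ and $\mls E$; extending $i$ reduces to extending $s$, with obstruction in an $H^1$ that need not vanish for fixed $V$. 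The resolution is to let $V$ vary locally on $\mls S_r$: by twisting a base polynomial representation (producing an embedding at the given point via \ref{cor:4.6}) by sufficiently high powers of the determinant character and using the det-ampleness of $\mls E$ together with Serre vanishing on the coarse space $X$ (combined with tameness, which gives $R^i\pi_* = 0$ for $i > 0$), we arrange the vanishing of this $H^1$ while retaining the embedding. Combined with the openness of the condition that $s$ defines an immersion landing in $\QP(V)$, this shows that each point of $\mls S_r$ admits an open neighborhood covered smoothly by some $\mls H_V$, yielding the desired smooth presentation and completing the proof.
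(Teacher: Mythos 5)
Your overall architecture (cover $\mls S_r$ by Hilbert-type stacks attached to the $\QP(V)$'s) is in the spirit of the paper, but the paper does not prove algebraicity by exhibiting a \emph{smooth} cover and checking the infinitesimal criterion. Instead it decomposes $\mls S_r$ into open substacks $\mls S_{N,m_0}$ (fixing the descent integer $N$ and the generation degree $m_0$) and shows each one is \emph{equivalent} to a disjoint union of algebraic stacks $\mls H'_{a,b}$, where an object is a triple $(\mls V_1,\mls V_2,i)$ with $\mls V_1,\mls V_2$ vector bundles on $T$ with $\mathbf{GL}_r$-action and $i$ a closed immersion into $\QP(\mls V_1\oplus\mls V_2)^\circ$, subject to the condition that the canonical maps $\mls V_1\to \bar f_*\mls O_X(1)$ and $\mls V_2\to \bar f_*(\oplus_m\pi_*S^m(\mls E^{\oplus r})(1))$ are isomorphisms. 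That last condition, together with the recovery procedure of \ref{S:recover}, rigidifies the Hilbert-stack datum so that it carries exactly the same information as $(\mls X,\mls E)$; no smoothness of a forgetful map ever has to be verified, and part (i) of \ref{T:5.1} (triviality of $2$-automorphisms) falls out of the same equivalence. Your route is genuinely different, and it has two gaps.

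First, surjectivity of $\bigsqcup_V\mls H_V\to\mls S_r$ with $V$ ranging over \emph{fixed} representations over the base ring does not follow from \ref{cor:4.6}. For a family $(\mls X,\mls E)$ over $T$ the representation produced by the construction is $\mls V=\bar f_*\mls O_X(1)\oplus(\oplus_m\bar f_*(\pi_*S^m(\mls E^{\oplus r})(1)))$, a locally free sheaf \emph{on $T$} whose $\mathbf{GL}_r$-comodule structure varies with the family; even Zariski-locally on $T$ where $\mls V$ is free, there is no reason its polynomial action is pulled back from a fixed representation on your list. This is precisely why the paper introduces the stack $\Sigma_a$ of rank-$a$ bundles with polynomial $\mathbf{GL}_r$-action of bounded degree and builds the Hilbert stack relatively over $B\mathbf{GL}_b\times\Sigma_a$: the representation has to be part of the moduli data. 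Second, your smoothness argument is incomplete at its crux. Extending the embedding across a square-zero thickening, with the deformation $(\mls X',\mls E')$ prescribed, amounts (via \ref{P:2.3} and \ref{R:2.8}) to lifting a $\mathbf{GL}_r$-\emph{equivariant} map $V\to H^0(X,\oplus_m\pi_*S^m(\mls E^{\oplus r}))$. Serre vanishing and tameness can kill the coherent $H^1$ on $\mls X$ governing lifts of the underlying module map, but they say nothing about lifting \emph{equivariantly}: over $\mathbf{Z}$ or in positive characteristic $\mathbf{GL}_r$ is not linearly reductive, so the equivariant lifting problem carries a further obstruction in group cohomology that twisting by $\det^N$ does not remove. (There is also the issue, which you do not address, that descent for the $1$-category $\mls S_r$ requires knowing that $2$-automorphisms of morphisms in $\mls S_r^{(2)}$ are trivial; this is true because a det-ample bundle is faithful, but it needs to be said.) Both difficulties evaporate in the paper's formulation because the comparison map from $\mls H'_{a,b}$ is an isomorphism onto an open substack rather than a smooth surjection.
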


As a second application, let $\mls M(T)$ be the category of tame orbicurves over $T$: objects are flat proper tame Artin stacks $\mls C \to T$ such that geometric fibers are geometrically connected of dimension 1 and have a dense open subscheme. Morphisms are isomorphism classes of isomorphisms of stacks over $T$. Let $\mls M$ be the fibered category over schemes associated to the categories $\mls M(T)$. In Section \ref{S:section8} we show the following.
\begin{thm}[Proposition \ref{P:7.11} and Theorem \ref{T:8.13}]
$\mls M$ is an algebraic stack.
\end{thm}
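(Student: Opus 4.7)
The strategy is to reduce algebraicity of $\mls M$ to that of $\mls S_1$ (from the previous theorem) by producing a smooth surjective morphism from a locally closed substack of $\mls S_1$ onto $\mls M$ via the forgetful functor. The key structural fact is that tame orbicurves are cyclotomic: at any non-generic geometric point the stabilizer acts faithfully on the one-dimensional tangent space and hence embeds in $\Gm$, so stabilizers are of the form $\bmu_n$, and line bundles suffice as ample bundle data. For the diagonal $\mls M \to \mls M \times \mls M$, representability by a finite-type algebraic space reduces to showing that $\underline{\text{Isom}}_T(\mls C, \mls C')$ is representable for any two tame orbicurves $\mls C, \mls C' \to T$; this follows from general representability results for Hom stacks between proper tame Artin stacks (due to Olsson), with isomorphisms forming an open subfunctor.

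For the smooth cover, let $\mls S_1^{\mathrm{oc}} \subset \mls S_1$ denote the locally closed substack parametrizing pairs $(\mls C, \mls L)$ with $\mls C$ a tame orbicurve; it is algebraic by the previous theorem. Consider the forgetful morphism $F: \mls S_1^{\mathrm{oc}} \to \mls M$, $(\mls C, \mls L) \mapsto \mls C$. Smoothness of $F$ follows from deformation theory: given a square-zero thickening $T \hookrightarrow T'$ with kernel $I$ and a lift $\mls C'/T'$ of $\mls C/T$, the obstruction to extending a line bundle $\mls L$ on $\mls C$ lies in $H^2(\mls C, \mls O_{\mls C} \otimes I)$, which vanishes because for a proper tame Artin stack of relative dimension one over $T$ the coarse moduli map $\pi: \mls C \to C$ satisfies $R^i\pi_* = 0$ for $i > 0$ on coherent sheaves, reducing the computation to $H^2$ of a proper relative curve. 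Surjectivity requires that every tame orbicurve $\mls C/T$ admit a $\det$-ample line bundle \'etale-locally on $T$; we construct one as $\pi^* A^{\otimes N} \otimes \mls L_\Sigma$, where $A$ is a relatively ample line bundle on the coarse space $C \to T$ (available \'etale-locally by projectivity of proper flat families of curves) and $\mls L_\Sigma$ is a line bundle on $\mls C$ that is faithful at every stacky point (built \'etale-locally from Cartier divisors supported on the stacky locus).

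The main technical obstacle is surjectivity of $F$ in families. While at a single geometric point the construction of $\mls L_\Sigma$ is transparent --- at a stacky point $P$ with stabilizer $\bmu_n$, the ideal sheaf $\mls O_{\mls C}(-P)$ carries a faithful $\bmu_n$-action --- globalizing over $T$ requires \'etale-localizing $T$ to trivialize the monodromy of the stacky locus $\Sigma \subset \mls C$ and to realize each component as a Cartier divisor on $\mls C$. The local structure theorem for tame Artin stacks \cite{tame}, presenting $\mls C$ \'etale-locally as a quotient $[U/\bmu_n]$ near each stacky point, should provide the needed local models; one then verifies that the ample power $N$ can be chosen uniformly across the base after sufficient \'etale localization so that $\pi^* A^{\otimes N} \otimes \mls L_\Sigma$ is $\det$-ample in every geometric fiber, completing the proof via \ref{cor:4.6}.
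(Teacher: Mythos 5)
Your overall architecture --- take the stack of pairs (orbicurve, ample bundle) from the previous theorem, restrict to the locus of orbicurves, and map it onto $\mls M$ by forgetting the bundle, checking surjectivity by producing an ample bundle on each fiber and formal smoothness by an $H^2$-vanishing obstruction computation --- is exactly the paper's strategy, and your deformation-theoretic step (obstruction in $H^2(\mls C, \mathscr{E}\!nd(\mls E)\otimes I)$, killed because $R^i\pi_*=0$ for $i>0$ by tameness and the coarse space has dimension $1$) matches the paper's proof of smoothness of the cover.

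However, there is a genuine gap at the foundation of your reduction: the claim that tame orbicurves are cyclotomic, and hence that line bundles (i.e.\ the substack of $\mls S_1$) suffice, is false in the generality of this paper. The definition of tame Artin curve here allows arbitrary singularities and non-reduced structure, and the paper explicitly emphasizes that $\mls M$ parametrizes ``curves with arbitrary singularities and stabilizer groups.'' Your tangent-space argument only applies at smooth points, where the tangent space is one-dimensional; at a node the tangent space is two-dimensional and the stabilizer need not embed in $\Gm$. Concretely, over $\C$ let the dihedral group $D_n$ ($n\geq 3$) act on the node $\{xy=0\}$ with $\bmu_n$ acting by $(x,y)\mapsto (\zeta x, \zeta^{-1}y)$ and the involution swapping $x$ and $y$: the generic stabilizer on each branch is trivial, so (after compactifying) the quotient is a tame orbicurve, but the stabilizer at the node is the non-abelian group $D_n$, which admits no faithful one-dimensional representation. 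Such a stack carries no faithful line bundle at all, so your map from $\mls S_1^{\mathrm{oc}}$ cannot be surjective, and no amount of \'etale localization on the base or Cartier-divisor construction $\mls L_\Sigma$ repairs this. The paper instead uses the full disjoint union $\sqcup_r \mls S_r$ over all ranks, and the surjectivity input is the existence of a \emph{faithful vector bundle} of some rank on an arbitrary tame Artin curve (quoted from Mathur, \emph{loc.\ cit.}\ Corollary 60), combined with \ref{E:3.6} to twist it into a $\det$-ample bundle; this existence statement is a substantive input that your proposal replaces with an argument that only works in the cyclotomic case. A secondary, minor point: the orbicurve locus inside $\mls S_r$ is open (not merely locally closed), and one still needs the separate argument of \ref{P:7.11} that $\mls M$ is a stack, which your proposal does not address.
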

The geometric objects parametrized by $\mls M$ are curves with arbitrary singularities and stabilizer groups. Hence this stack may be viewed as an analog of the algebraic stack of representable curves in \cite[\href{https://stacks.math.columbia.edu/tag/0D5A}{Tag 0D5A}]{stacks-project} (see also \cite[3.3]{dJHS} \cite[B.1]{smyth}).


The article includes two appendices.  The first contains a few results about ample line bundles on algebraic spaces that we could not locate in the literature.  The second discusses geometric invariant theory over general rings, following work of Alper \cite{Alperadequate}, which in turn generalizes work of Seshadri \cite{Seshadri}.  

\begin{rem} Though we do not address it here, we expect that at least some of the results of this article can be extended to certain stacks with positive-dimensional stabilizer group schemes and adequate moduli spaces.
\end{rem}

\subsection{Conventions}
A \emph{vector bundle} on an algebraic stack is a locally free sheaf of finite  rank (we allow the rank to vary across connected components). If $\mls W$ is a vector bundle on an algebraic stack $\mls X$ then $\mathbf{GL}(\mls W)$ is the group scheme over $\mls X$ associated to automorphisms of $\mls W$ and $\mls End(\mls W)$ is the monoid scheme associated to endomorphisms of $\mls W$.  They act on $\mls W$ on the left. Finally, if $\mls W$ is any coherent sheaf on $\mls X$ we use $S^\bullet \mls W$ to denote the symmetric algebra of $\mls W$ and we define $\mathbf{A}_{\mls W} := \Spec_{\mls X}(S^\bullet \mls W)$. When $\mls W$ is locally free of finite rank we call $\mathbf{A}_{\mls W}$ the associated geometric vector bundle. 

The notion of tame stack is defined in \cite[3.1]{tame}.  Note in particular that all tame stacks are assumed of finite presentation over a specified base scheme.  We will further assume throughout that all tame stacks have finite diagonal over the base. 

If $\mls X$ is a tame stack over a scheme $S$ then it has a coarse moduli space $\pi :\mls X\rightarrow X$, with $X$ an algebraic space of finite presentation over $S$.  In the case when $S$ is locally noetherian this is shown in \cite{KeelMori}.  The general case is treated in \cite{KMConrad}.  Because of the tameness assumption, which implies that the formation of coarse moduli space commutes with base change,  the coarse space can be described as follows.  If $\Sp (R)\subset S$ is an affine open, then since $\mls X/S$ is of finite presentation there exists a finitely generated (in particular noetherian) subring $R_0\subset R$ such that $\mls X|_{\Sp (R)} = \mls X_0\times _{\Sp (R_0)}\Sp (R)$ for a tame stack $\mls X_0/R_0$.  The restriction $X|_{\Sp (R)}$ is then simply $X_0\times _{\Sp (R_0)}\Sp (R)$, where $X_0$ is the coarse space of $\mls X_0$.

\subsection{Acknowledgements} Bragg received support from NSF grant 1840190. Olsson was partially supported by NSF FRG grant DMS-2151946 and the Simons Foundation. Webb was partially supported by a grant from the Simons Foundation, and also benefited from the hospitality and stimulating academic environment of the Isaac Newton Institute. 
 This work originated with discussions at a meeting of the AMS Mathematics Research Community  \emph{Explicit Computations with Stacks} supported by the NSF under Grant Number DMS 1916439, whose stimulating environment is gratefully acknowledged.    Separately the authors learned that Max Lieblich has considered similar questions in unpublished work.

\section{Polynomial representations and stacks}\label{S:section2}

\subsection{Motivation: polynomial representations of $\Gm$}
To motivate our discussion of polynomial representations, we recall two characterizations of maps to projective space. 

If $\mls L$ is a line bundle on a scheme $X$ let $\mathbf{A}_{\mls L} := \Spec_X(\oplus_{m \in \mathbf{N}} \mls L^{\otimes m})$ be the associated geometric line bundle equipped with the scaling $\Gm$ action. There are functorial bijections
\begin{equation}\label{eq:motivate2}
\left\{\begin{array}{c}
\text{morphisms}\\
X \to \mathbf{P}^n \end{array}\right\} \leftrightarrow \left\{\begin{array}{c}
\text{line bundles}\;\mls L\;\text{and}\\
\text{sections}\;s_0, \ldots, s_n\;\text{of}\;\mls L\\
\text{with no common zero}\end{array}\right\} \leftrightarrow \left\{\begin{array}{c}
\text{geometric line bundles}\;\mathbf{A}_{\mls L}\;\text{and}\\
\text{equivariant morphisms}\;\\
\mathbf{L} \to \mathbf{A}^{n+1}\setminus \{0\}\end{array}\right\}
\end{equation}
where $\Gm$ acts on $\mathbf{A}^{n+1} \setminus \{0\}$ by scaling.
Indeed, a choice of sections $s_i \in H^0(X, \mls L)$ for $i=0, \ldots, n$ is equivalent to a graded ring homomorphism 
\[
k[x_0, \ldots, x_n] \to \oplus_{m \geq 0} H^0(X, \mls L^{\otimes m});
\]
i.e., an equivariant morphism $\mathbf{A}_{\mls L} \to \mathbf{A}^{n+1}$.

If we replace the scaling $\Gm$ action on $\mathbf{A}^{n+1}$ with any action having integer weights $\alpha_0, \ldots, \alpha_n$, we obtain a description similar to \eqref{eq:motivate2} for morphisms to the stack $\mls P := [\mathbf{A}^{n+1}\setminus\{0\}/\Gm]$:
\[
\left\{\begin{array}{c}
\text{morphisms}\\
X \to\mls P \end{array}\right\} \leftrightarrow \left\{\begin{array}{c}
\text{line bundles}\;\mls L\;\text{and}\\
\text{sections}\; s_i \in H^0(X, \mls L^{\otimes a_i})\\
\text{with no common zero}\end{array}\right\} \leftrightarrow \left\{\begin{array}{c}
\text{geometric line bundles}\;\mathbf{A}_{\mls L}\;\text{and}\\
\text{equivariant morphisms}\;\\
\mathbf{A}_{\mls L}^* \to \mathbf{A}^{n+1}\setminus \{0\}\end{array}\right\}
\]
Here, $\mathbf{A}_{\mls L}^* \subset \mathbf{A}_{\mls L}$ is the complement of the zero section.
The key to the second bijection is that our line bundle $\mls L$ and sections $s_i \in H^0(X, \mls L^{\otimes \alpha_i})$ now define a graded ring homomorphism 
\begin{equation}\label{eq:motivate1}
k[x_0, \ldots, x_n] \to \oplus_{m \in \mathbf{Z}} H^0(X, \mls L^{\otimes m}).
\end{equation}
The morphism $\mathbf{A}_{\mls L}^* \to \mathbf{A}^{n+1}$ extends to a morphism $\mathbf{A}_{\mls L} \to \mathbf{A}^{n+1}$ exactly when the ring map \eqref{eq:motivate1} factors through $\oplus_{m \geq 0} H^0(X, \mls L^{\otimes m})$; i.e., exactly when the weights $\alpha_i$ are nonnegative.\footnote{When the weights are in fact all positive, the stack $\mls P$ is the stacky weighted projective space with weights $\alpha_0, \ldots, \alpha_n$.} Thus if we want a characterization of morphisms to $\mls P$ that fully generalizes the bijections in \eqref{eq:motivate2}, we must require $\alpha_i \geq 0$.

The representations $\mathbf{A}^{n+1}$ of $\Gm$ with all nonnegative weights are precisely the \textit{polynomial} representations of $\Gm$. Polynomiality of representations of $\mathbf{GL}_r$ and the bijections analogous to \eqref{eq:motivate2} are discussed in the next sections. 

\begin{example}

Let $\mathbf{G}_m$ act on $\mathbf{A}^2$ with weights $(1, -1)$ and let $\mls P$ denote the corresponding weighted projective stack. For any stack $\mls X$, a morphism $\mls X \to \mls P$ is equivalent to a line bundle $\mls L$ on $\mls X$ and an equivariant morphism $\mathbf{A}^*_{\mls L} \to \mathbf{A}^2\setminus\{0\}$, or equivalently a $\mathbf{G}_m$-equivariant ring map 
    $$
    \underline {\Sp }_{\mls X}(\oplus _{n\in \mathbf{Z}}\mls L)\rightarrow \mathbf{A}^2.
    $$
    that is, a morphism $\mls X \to \mls P$ is equivalent to a line bundle $\mls L$ and sections
    $s\in H^0(\mls X, \mls L)$ and $t\in H^0(\mls X, \mls L^{-1})$.  Note, in particular, that the induced map $\mls O_{\mls X}^{\oplus 2}\rightarrow \oplus _{n\in \mathbf{Z}}\mls L^{\otimes n}$ does not have image in $\oplus _{n\geq 0}\mls L^{\otimes n}.$
\end{example}

\subsection{Relative representations of $\mathbf{GL}_r$}

Let $\mls Y$ be an algebraic stack over a ring $R$. Let $R[x_{ij}]_{\det}$ be the coordinate ring of $\mathbf{GL}_{r, R}$, where $1 \leq i, j \leq r$. A representation of $\mathbf{GL}_r$ over $\mls Y$ is a vector bundle $\mls W$ on $\mls Y$ together with a group homomorphism 
\begin{equation}\label{eq:rep1}
\mathbf{GL}_{r, \mls Y} \to \mathbf{GL}(\mls W)
\end{equation}
over $\mls Y$. Such a homomorphism corresponds to a left action of $\mathbf{GL}_{r, \mls Y}$ on $\mls W$, or geometrically a right action
\begin{equation}\label{eq:rep2}
\mathbf{A}_{\mls W} \times \mathbf{GL}_r \to \mathbf{A}_{\mls W}
\end{equation}
satisfying certain axioms. It is also equivalent to the data of a $R[x_{ij}]_{\det}$-comodule; that is, an $\mls O_{\mls Y}$-module homomorphism
\begin{equation}\label{eq:rep3}
\mls W \to \mls W \otimes_R R[x_{ij}]_{\det}
\end{equation}
satisfying certain axioms. For more on these equivalences see for example \cite[4.a]{Milne}.

Let $M_r$ denote the monoid scheme of $r \times r$ matrices, so we have an open  immersion $\mathbf{GL}_{r, R} \to M_{r, R}$ dual to the localization map
\[
R[x_{ij}] \to R[x_{ij}]_{\det}.
\]
\begin{defn}\label{def:polynomial representation}
    Following classical terminology (see for example \cite{Green}) we say that the representation $\mls W$ is \textit{polynomial} if the associated coaction morphism \eqref{eq:rep3} has image in $\mls W \otimes_R R[x_{ij}]$. Equivalently, the homomorphism \eqref{eq:rep1} extends to a morphism of monoid schemes $M_{r, \mls Y} \to  \mls End(\mls W)$ over $\mls Y$, and the associated $\mathbf{GL}_r$-module \eqref{eq:rep2} is the restriction of an action of the monoid scheme $M_r$ on $\mathbf{A}_{\mls W}$.
\end{defn}

\begin{defn} We say that a polynomial representation $\mls W$ has \emph{degree $\leq M$} if the image of the coaction map $\mls W\rightarrow \mls W\otimes _RR[x_{ij}]$ is contained in the sub-$R$-module $\mls W\otimes _RR[x_{ij}]^{\leq m}$, where $R[x_{ij}]^{\leq m}\subset R[x_{ij}]$ is the submodule of polynomials of degree $\leq m$.
\end{defn}

\begin{rem} Concretely if $\mls W$ is a polynomial representation, then the condition that $\mls W$ has degree $\leq M$ is equivalent to the following.  Locally on $\Sp (R)$ we can trivialize $\mls W$ in which case the action of $M_r$ on $\mls W$ is given by a map $t:M_r\rightarrow M_s$, where $s$ is the rank of $\mls W$.  Therefore for any $R$-algebra $A$ and matrix $U\in M_r(A)$ we get another matrix $t(U)\in M_s(A)$.  The condition that $\mls W$ has degree $\leq M$ means that the entries of $t(U)$ are given by polynomials of degree $\leq M$ in the entries of $U$.  
\end{rem}

\begin{rem} Some discussion of monoid schemes can be found in \cite[Expos\'e I, 2.2]{SGA31}. Concretely a monoid scheme $M$ over a stack $\mls Y$ consists of a representable morphism $M\rightarrow \mls Y$ and a map $M\times _{\mls Y}M\rightarrow M$ satisfying the usual axioms for a monoid.
\end{rem}

\begin{rem}\label{R:relative-actions}
More generally, if $\mls W$ is a coherent sheaf on $\mls Y$ an action of $\mathbf{GL}_r$ on $\mls W$ is a group homomorphism $\mathbf{GL}_r \to \mathrm{Aut}(\mls W)$ (where automorphisms of $\mls W$ act on the left). This data is equivalent to a geometric right action $\mathbf{A}_{\mls W} \times \mathbf{GL}_r \to \mathbf{A}_{\mls W}$ over $\mls Y$ on $\mathbf{A}_{\mls W}:= \Spec _{\mls Y}(\oplus _{n\geq 0}S^n\mls W)$ associated to $\mls W$, and it is also equivalent to a comodule structure $\mls W \to \mls W \otimes_R R[x_{ij}]_{\det}$.
\end{rem}

\subsection{Morphisms to quotients of polynomial representations}
Let $\mls Y$ be an algebraic stack and let $\mls W$ be a representation of $\mathbf{GL}_r$ over $\mls Y$. We have an associated algebraic stack $\mls A_{\mls W} := [\mathbf{A}_\mls W / \mathbf{GL}_{r, \mls Y} ]$ over $\mls Y$.

\begin{thm}\label{P:2.3} Let $\mls W$ be a polynomial representation of $\mathbf{GL}_r$ over an algebraic stack $\mls Y$. Let $f:\mls X\rightarrow \mls Y$ be a morphism of algebraic stacks.  Then the groupoid of liftings of $f$ to a morphism $\tilde f:\mls X\rightarrow \mls A_{\mls W}$ is equivalent to the groupoid of data $(\mls F, \gamma )$ as follows:
\begin{enumerate}
    \item [(i)] $\mls F$ is a vector bundle of rank $r$ on $\mls X$.
    \item [(ii)] $\gamma: \mathbf{A}_{\mls F^{\oplus r}} \to \mathbf{A}_\mls W$ is a $\mathbf{GL}_r$-equivariant morphism, where the right action on $\mathbf{A}_{\mls F^{\oplus r}}$ is induced by the natural structure of $\mls F^{\oplus r}$ as a $\mathbf{GL}_r$-representation. 
\end{enumerate}
\end{thm}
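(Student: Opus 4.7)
The plan is to combine the standard description of morphisms into a quotient stack with the classical equivalence between $\mathbf{GL}_r$-torsors and rank-$r$ vector bundles, and then to use the polynomial hypothesis to extend equivariant morphisms defined on the torsor to its ambient affine space $\mathbf{A}_{\mls F^{\oplus r}}$.

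First, by the standard yoga of quotient stacks, liftings $\tilde f:\mls X\rightarrow \mls A_{\mls W}$ of $f$ are equivalent to pairs $(\mls P,\sigma)$, where $\mls P\rightarrow \mls X$ is a $\mathbf{GL}_{r,\mls X}$-torsor and $\sigma:\mls P\rightarrow \mathbf{A}_{\mls W}$ is a $\mathbf{GL}_r$-equivariant $\mls Y$-morphism; morphisms on the torsor side are $\mathbf{GL}_r$-equivariant isomorphisms of torsors intertwining the $\sigma$'s. Second, $\mathbf{GL}_r$-torsors on $\mls X$ are classically equivalent to rank-$r$ vector bundles via $\mls F\mapsto \mls P_{\mls F}:=\mathrm{Isom}_{\mls O_{\mls X}}(\mls F,\mls O_{\mls X}^r)$. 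Under the functor-of-points identification $\mathbf{A}_{\mls F^{\oplus r}}(T)=\mathrm{Hom}_{\mls O_T}(\mls F|_T,\mls O_T^r)$, the torsor $\mls P_{\mls F}$ is the open subscheme where the homomorphism is an isomorphism, and the $\mathbf{GL}_r$-action on $\mathbf{A}_{\mls F^{\oplus r}}$ coming from the natural representation structure on $\mls F^{\oplus r}$ restricts to the torsor action on $\mls P_{\mls F}$.

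The heart of the argument is then the third step: restriction $\gamma\mapsto \gamma|_{\mls P_{\mls F}}$ is an equivalence between $\mathbf{GL}_r$-equivariant $\mls Y$-morphisms $\gamma:\mathbf{A}_{\mls F^{\oplus r}}\rightarrow \mathbf{A}_{\mls W}$ and $\mathbf{GL}_r$-equivariant $\mls Y$-morphisms $\sigma:\mls P_{\mls F}\rightarrow \mathbf{A}_{\mls W}$. Given $\sigma$, I would produce $\gamma$ locally: over an affine $\mls U\subset \mls X$ on which $\mls F$ trivializes, $\mathbf{A}_{\mls F^{\oplus r}}|_{\mls U}\cong M_r\times \mls U$ and $\mls P_{\mls F}|_{\mls U}\cong \mathbf{GL}_r\times \mls U$, and $\sigma$ is determined by its restriction $\phi:=\sigma|_{\{1\}\times \mls U}:\mls U\rightarrow \mathbf{A}_{\mls W}$. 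The polynomial hypothesis on $\mls W$ is precisely (cf.\ \ref{def:polynomial representation}) the statement that the right action of $\mathbf{GL}_r$ on $\mathbf{A}_{\mls W}$ extends to a monoid action of $M_r$, and I use this extended action to set $\gamma(m,u):=\phi(u)\cdot m$. The resulting morphism is visibly $\mathbf{GL}_r$-equivariant and agrees with $\sigma$ on $\mls P_{\mls F}$. Uniqueness of the extension---needed to see that the local $\gamma$'s glue and that the extension is independent of the chosen trivialization---follows from the facts that $\mathbf{GL}_r\subset M_r$ is schematically dense (cut out by the non-zero-divisor $\det$) and that $\mathbf{A}_{\mls W}\rightarrow \mls Y$ is affine, hence separated over $\mls Y$.

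Finally, I would check functoriality of the correspondence: an isomorphism $\mls P_{\mls F}\xrightarrow{\sim}\mls P_{\mls F'}$ of torsors intertwining the $\sigma$'s corresponds to an isomorphism $\psi:\mls F\xrightarrow{\sim}\mls F'$ whose induced map $\mathbf{A}_{\mls F^{\oplus r}}\xrightarrow{\sim}\mathbf{A}_{\mls F'^{\oplus r}}$ intertwines the $\gamma$'s (again by uniqueness of extension). I expect the main obstacle to be the extension step: without the polynomial hypothesis the formula $\phi(u)\cdot m$ does not make sense, so this condition is exactly what allows one to replace the somewhat awkward data of a torsor-plus-equivariant-map by the cleaner data of a morphism from $\mathbf{A}_{\mls F^{\oplus r}}$.
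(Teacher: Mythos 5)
Your proposal is correct and follows essentially the same route as the paper: both reduce via the standard description of maps to quotient stacks to a torsor $\mathbf{I}=\mls P_{\mls F}$ with an equivariant map to $\mathbf{A}_{\mls W}$, identify that torsor as the open locus of $\mathbf{A}_{\mls F^{\oplus r}}$ where $\det$ is invertible, and then check the key extension step locally after trivializing $\mls F$. The only difference is cosmetic: where you extend the map geometrically via the $M_r$-action and invoke schematic density of $\mathbf{GL}_r\subset M_r$ for uniqueness and gluing, the paper phrases the same local computation algebraically, showing that the ring map $f^*S^\bullet\mls W\to\mls O_{\mls X}[x_{ij}]_{\det}$ factors through $\mls O_{\mls X}[x_{ij}]$ because the coaction is polynomial.
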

\begin{rem}\label{R:2.8}
The $\mathbf{GL}_r$-equivariant morphism $\gamma$ appearing in (ii) is equivalent to a morphism of $\mls O_{\mls X}$-modules
\begin{equation}\label{eq:2.3-ring}
f^*\mls W \to S^\bullet(\mls F^{\oplus r})
\end{equation}
that is $\mathbf{GL}_r$-equivariant in the appropriate sense.
\end{rem}

\begin{proof}[Proof of \ref{P:2.3}]
From \cite[\href{https://stacks.math.columbia.edu/tag/04UV}{Tag 04UV}]{stacks-project} we know that the groupoid of liftings $\tilde f$ is equivalent to the groupoid of pairs $(\mathbf{I}, g)$ where $p:\mathbf{I} \to \mls X$ is a principal $\mathbf{GL}_r$-bundle (where $\mathbf{GL}_r$ acts on the right) and $g: \mathbf{I} \to \mathbf{A}_{\mls W}$ is an equivariant homomorphism over $\mls Y$. Let $\mls F$ be the vector bundle on $\mls X$ such that $\mathbf{A}_{\mls F} = \mathbf{I} \times^{\mathbf{GL}_r} \mathbf{A}^r$ where $\mathbf{GL}_r$ acts on $\mathbf{A}^r$ in the standard manner. Then $\mathbf{A}_{\mls F^{\oplus r}} = \mathbf{I} \times^{\mathbf{GL}_r} M_r$ and we have the determinant morphism
\begin{equation}\label{eq:det}
\det: \mathbf{A}_{\mls F^{\oplus r}} \to \mathbf{A}_{\det(\mls F)}.
\end{equation}
Moreover, $\mathbf{I}$ is the open substack of $\mathbf{A}_{\mls F^{\oplus r}}$ equal to the preimage under \eqref{eq:det} of the complement of the zero section of $\mathbf{A}_{\det(\mls F)}$. In particular we have an injective homomorphism of $\mls O_\mls X$-algebras $S^\bullet \mls F^{\oplus r} \hookrightarrow p_*\mls O_{\mathbf{I}}$.

The equivariant homomorphism $g: \mathbf{I} \to \mathbf{A}_{\mls W}$ then gives a commuting diagram of solid arrows
\[
\begin{tikzcd}
p_*\mls O_{\mathbf{I}} \arrow[d] \arrow[r,hookleftarrow]& S^\bullet \mls F^{\oplus r} & f^*S^\bullet \mls W \arrow[ll, bend right, "g^{\#}"'] \arrow[d]\arrow[l, dotted] \\
p_*\mls O_{\mathbf{I}} \otimes_{\mls O_\mls X} \mls O_{\mls X}[x_{ij}]_{\det} && f^*S^\bullet \mls W \otimes_{\mls O_\mls X} \mls O_{\mls X}[x_{ij}]_{\det} \arrow[ll]
\end{tikzcd}
\]
and to prove the theorem it suffices to show that when $\mls W$ is polynomial, the morphism $g^{\#}$ factors through $S^\bullet \mls F^{\oplus r}$ as indicated. This may be checked locally on $\mls X$ where we have identifications $\mls F \simeq \mls O^{\oplus r}_{\mls X}$ and $p_*\mls O_{\mathbf{I}} = \mls O_{\mls X}[x_{ij}]_{\det}$; in this case, the left vertical arrow is the diagonal map. When $\mls W$ is polynomial the right vertical arrow has image in $f^*S^\bullet \mls W \otimes_{\mls O_\mls X} \mls O_{\mls X}[x_{ij}]$, and it follows that in this case $g^{\#}$ factors as required.
\end{proof}

\begin{rem}\label{rem:homis}
The stack $\mathbf{A}_{\mls F^{\oplus r}}$ appearing in \ref{P:2.3} is equal to the hom stack
\[
    \mathbf{A}_{\mls F^{\oplus r}} = \Hom_{\mls X}(\mls F, \mls O_{\mls X}^{\oplus r})
\]
and the principal $\mathbf{GL}_r$-bundle $\mathbf{I}$ appearing in the proof is the open substack of isomorphisms
$\mathbf{I} = \underline{\mathrm{Isom}}_{\mls X}(\mls F, \mls O_{\mls X}^{\oplus r}).$
\end{rem}





\section{Ample vector bundles}
\label{S:ample}
Let $R$ be a Noetherian ring and let $\mls X$ be a finite type tame stack over $R$.

\subsection{Sheaves of algebras associated to vector bundles}\label{S:algebras}
 Let $\mls E$ be a vector bundle on $\mls X$. Let $f: \mathbf{E} \to \mc X$ and  $p: \mathbf{P}(\mls E) \to \mls X$ denote the stacks
\[
    \mathbf{E} := \Spec_{\mc X} (S^\bullet \mc E) \quad \quad \text{and} \quad \quad \mathbf{P}(\mls E) := \Proj_{\mls X}(S^\bullet \mls E)
\]
and let $E \to X$ and $h: P \to X$ denote the respective coarse spaces. Then we have
\[
    E = \Spec_X( \oplus \pi_*S^n \mls E) \quad \quad \text{and} \quad\quad P = \Proj_{X}(\oplus \pi_*S^n \mls E).
\]

\begin{lem}\label{L:3.4b} Let $\mls E$ be a vector bundle on $\mls X$, and let $\mls F$ be a coherent sheaf on $\mls X$.

(i) The sheaf of algebras $\oplus _{n\geq 0}\pi _*S^n\mls E$ is finitely generated over $\mls O_X$.

(ii) If $\mls F$ is a coherent sheaf on $\mls X$ then $\oplus _{n\geq 0}\pi _*(\mls F\otimes S^n\mls E)$ is a finitely generated module over $\oplus _{n\geq 0}\pi _*S^n\mls E$.
\end{lem}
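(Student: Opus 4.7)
The plan is to reduce both claims to an affine local picture and then invoke classical finiteness results from invariant theory for linearly reductive finite flat group schemes. Both claims are local on $X$, so I may assume $X = \Sp(A)$ is affine with $A$ Noetherian. By the local structure theorem for tame stacks \cite[Theorem 3.2]{tame}, after replacing $X$ by an \'etale cover (which is harmless for checking finite generation), I may write $\mls X = [\Sp(B)/G]$ where $G$ is a finite flat linearly reductive group scheme over $A$ and $B$ is a finite $A$-algebra with $B^G = A$. Let $E$ be the $G$-equivariant finitely generated projective $B$-module corresponding to $\mls E$, and let $F$ be the $G$-equivariant finitely generated $B$-module corresponding to $\mls F$. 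Exactness of the $G$-invariants functor (the defining content of tameness) then identifies $\bigoplus_{n\geq 0}\pi_*S^n\mls E$ with $(S^\bullet_B E)^G$ and $\bigoplus_{n\geq 0}\pi_*(\mls F\otimes S^n\mls E)$ with $(F\otimes_B S^\bullet_B E)^G$.

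For (i), the symmetric algebra $S^\bullet_B E$ is a finitely generated $B$-algebra, and since $B$ is finite over $A$ it is a finitely generated $A$-algebra. By Hilbert's finiteness theorem for geometrically (and a fortiori linearly) reductive finite flat group schemes acting on finitely generated algebras over a Noetherian base --- available through Seshadri \cite{Seshadri}, Alper \cite{Alperadequate}, or the paper's own appendix on GIT --- the invariant ring $(S^\bullet_B E)^G$ is a finitely generated $A$-algebra, giving (i).

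For (ii) I would deduce the module statement from the algebra statement of (i) by a square-zero extension trick. Setting $M := F\otimes_B S^\bullet_B E$, I form the trivial extension $R := S^\bullet_B E \oplus M$ in which $M$ is a square-zero ideal; because $E$ and $F$ are finitely generated, $R$ is still a finitely generated $A$-algebra, and it carries a $G$-action preserving its algebra structure. Applying (i) to $R$ in place of $S^\bullet_B E$ shows $R^G = (S^\bullet_B E)^G \oplus M^G$ is a finitely generated, hence Noetherian, $A$-algebra, so the ideal $M^G \subset R^G$ is finitely generated; since $M^G \cdot M^G = 0$, any ideal generators span $M^G$ as a module over the quotient $R^G/M^G = (S^\bullet_B E)^G$, which is what (ii) asserts.

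The main obstacle is locating the correct version of Hilbert's finiteness theorem for invariants under a linearly reductive finite flat group scheme action over a general Noetherian base (rather than over a field); fortunately, this is exactly what the cited work of Seshadri and Alper, as well as the paper's own appendix, provides, so the argument above essentially reduces both parts of the lemma to a single invocation of that theorem.
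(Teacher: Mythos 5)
Your proof is correct, but it takes a genuinely different route from the paper's. The paper argues globally and geometrically: it forms the geometric vector bundle $\mathbf{E}=\underline{\Sp}_{\mls X}(S^\bullet\mls E)$, which is a finite type tame stack with coarse space $E=\underline{\Sp}_X(\oplus_n\pi_*S^n\mls E)$, and then simply cites the general facts that the coarse space of a finite type tame stack is of finite type over the base (giving (i)) and that pushforward along a coarse moduli map sends coherent sheaves to coherent sheaves (giving (ii), since the pushforward of $f^*\mls F$ to $E$ is the graded module $\oplus_n\pi_*(\mls F\otimes S^n\mls E)$). You instead unpack those black boxes: you pass to the \'etale-local presentation $[\Sp(B)/G]$ from the local structure theorem for tame stacks, identify the pushforwards with $G$-invariants, and invoke finiteness of invariant rings. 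Two small remarks on your inputs. First, for a \emph{finite flat} group scheme the finiteness of $(S^\bullet_BE)^G$ does not really need the Seshadri--Alper machinery (which is stated for reductive, resp.\ geometrically reductive, group schemes and sometimes carries excellence or universally Japanese hypotheses): the norm/characteristic-polynomial argument shows $R$ is integral over $R^G$, and Artin--Tate then gives finite generation of $R^G$ over the noetherian base; this is essentially how the cited Stacks project lemmas are proved. Second, your square-zero trivial extension $R=S^\bullet_BE\oplus M$ deducing the module statement (ii) from the algebra statement (i) is a clean and correct trick (the ideal $M^G$ of the noetherian ring $R^G$ is finitely generated, and since $M^G\cdot M^G=0$ its ideal generators are module generators over $(S^\bullet_BE)^G$); the paper gets the same conclusion more directly from coherence of $\pi_*$ on the total space. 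The trade-off is brevity and reliance on citations (the paper) versus a self-contained reduction to classical invariant theory (your version).
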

\begin{proof}
    It follows from \cite[\href{https://stacks.math.columbia.edu/tag/0DUX}{Tag 0DUX}]{stacks-project} and \cite[\href{https://stacks.math.columbia.edu/tag/0DUZ}{Tag 0DUZ}]{stacks-project} that $E \to X$ is of  finite type which implies (i). Similarly if $\mc F$ is a coherent sheaf on $\mc X$ then $\oplus_{n \geq 0} \pi_*(\mc F \otimes S^n \mc E)$ is the quasi-coherent sheaf of $\oplus_{n \geq 0} \pi_* S^n \mc E$-modules corresponding to the pushforward of $f^*\mc F$ to $E$, and hence is coherent implying (ii).
\end{proof}


\subsection{Criteria for ampleness}\label{S:criteria}
The projective bundle $p:\mathbf{P}(\mc E) \to \mc X$ comes equipped with the universal quotient $p^*\mls E \to \mls O_{\mathbf{P}(\mls E)}(1)$. We fix an integer $N$ such that $\mls O_{\mathbf{P}(\mls E)}(N)$ descends to an invertible sheaf $\mls O_P(1)$ on $P$. Note that $\mls O_P(1)$  is the sheaf called ``the $N^{th}$ twist of the structure sheaf'' in  \cite[\href{https://stacks.math.columbia.edu/tag/01MN}{Tag 01MN}]{stacks-project}, by loc. cit. it is relatively ample for $h$.

\begin{lem}\label{L:1}
 The following are equivalent.
    \begin{enumerate}
        \item [(i)] $\mls E$ is $H$-ample on $\mls X$.
        \item [(ii)] $\mls E$ is faithful and for every coherent sheaf $ F$ on $ X$ there is an integer $n_0$ such that $F \otimes \pi_* S^n \mls E$ is generated by global sections for $n \geq n_0$.
        \item [(iii)] $\mls E$ is faithful and $\mls O_P(1)$ is ample on $P$.
    \end{enumerate}
\end{lem}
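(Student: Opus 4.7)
The plan is to prove $(i)\Leftrightarrow(ii)$ directly by transferring coherent sheaves between $\mls X$ and its coarse space $X$, and to prove $(i)\Leftrightarrow(iii)$ by climbing through the coarse-space square relating the projective bundle $p\colon\mathbf{P}(\mls E)\to\mls X$ with its coarse space map $h\colon P\to X$. Faithfulness is common to all three conditions, so I treat only the generation hypotheses.

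For $(i)\Rightarrow(ii)$, apply $(i)$ to $\pi^*F$ for $F$ coherent on $X$ and use the projection formula for the tame coarse space map: $\pi_*(\pi^*F\otimes S^n\mls E)=F\otimes\pi_*S^n\mls E$. For $(ii)\Rightarrow(i)$, given coherent $\mls F$ on $\mls X$, the key input is Lemma~\ref{L:3.4b}(ii): the graded module $\bigoplus_n\pi_*(\mls F\otimes S^n\mls E)$ is finitely generated over the graded algebra $\bigoplus_n\pi_*S^n\mls E$. Choosing homogeneous generators of degrees $\le m$ yields, for $n\gg 0$, a surjection
\[
\bigoplus_{d\le m}\pi_*(\mls F\otimes S^d\mls E)\otimes\pi_*S^{n-d}\mls E\twoheadrightarrow\pi_*(\mls F\otimes S^n\mls E),
\]
and applying $(ii)$ to each coherent sheaf $\pi_*(\mls F\otimes S^d\mls E)$ on $X$ makes each summand globally generated for $n\gg 0$, hence so is the quotient.

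For $(i)\Rightarrow(iii)$, I would fix coherent $G$ on $P$, set $\mathbf{G}=\pi_P^*G$, and use relative ampleness of $\mls O_{\mathbf{P}(\mls E)}(1)$ for $p$ to arrange that $\mls F_0:=p_*\mathbf{G}(n_0)$ is coherent on $\mls X$ for large $n_0$. The canonical surjection $p^*S^m\mls E\twoheadrightarrow\mls O_{\mathbf{P}(\mls E)}(m)$, combined with the projection formula for $p$ applied to the locally free sheaf $S^m\mls E$, produces a surjection $\mls F_0\otimes S^m\mls E\twoheadrightarrow p_*\mathbf{G}(n_0+m)$. Pushing down by $\pi_*$ and using $\pi_P^*\mls O_P(1)=\mls O_{\mathbf{P}(\mls E)}(N)$ together with $\pi_{P*}\pi_P^*=\mathrm{id}$ identifies the target with $h_*(G\otimes\mls O_P(k))$ whenever $n_0+m=kN$. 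Hypothesis $(i)$ then gives global generation of $h_*(G(k))$ on $X$ for $k\gg 0$, and relative ampleness of $\mls O_P(1)$ for $h$ (from the $\Proj$ construction) combined with $h^*\dashv h_*$ delivers global generation of $G(k)$ on $P$, i.e.\ ampleness. Conversely, for $(iii)\Rightarrow(i)$, given coherent $\mls F$ on $\mls X$, I would set $G:=\pi_{P*}p^*\mls F$, coherent on $P$, and use the projection-formula identity $h_*(G\otimes\mls O_P(k))=\pi_*(\mls F\otimes S^{kN}\mls E)$. By $(iii)$ the sheaf $G(k)$ is globally generated on $P$, and relative Serre vanishing for the $h$-ample $\mls O_P(1)$ combined with $h_*\mls O_P=\mls O_X$ lets one push the surjection $\mls O_P^\ell\twoheadrightarrow G(k)$ down to $\mls O_X^\ell\twoheadrightarrow h_*(G(k))$, giving global generation of $\pi_*(\mls F\otimes S^{kN}\mls E)$. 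To cover $n\not\equiv 0\pmod N$, apply the same argument to $\mls F\otimes S^j\mls E$ for $j=0,1,\dots,N-1$ and use the symmetric-algebra multiplication surjection $\mls F\otimes S^j\mls E\otimes S^{kN}\mls E\twoheadrightarrow\mls F\otimes S^{j+kN}\mls E$ to transfer global generation.

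The main obstacles will be technical bookkeeping around the integer $N$ and the various natural maps: verifying the projection formulas and the identity $\pi_{P*}\pi_P^*=\mathrm{id}$ on coherent sheaves on $P$ (using tameness and the coarse-moduli property), confirming $h_*\mls O_P=\mls O_X$ (which follows from $R_0=\mls O_X$ in the presentation $P=\Proj_X\bigoplus_n\pi_*S^n\mls E$), and covering all residues of $n$ modulo $N$ in the $(iii)\Rightarrow(i)$ direction. Once these compatibilities are in place, the arguments become a routine transfer of global generation through the coarse-space square.
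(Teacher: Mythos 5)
Your overall architecture is sound, and your direct proof of $(ii)\Rightarrow(i)$ is a genuinely different (and arguably cleaner) route than the paper's: the paper goes $(ii)\Rightarrow(iii)\Rightarrow(i)$, whereas you use Lemma~\ref{L:3.4b}(ii) to get homogeneous generators of $\bigoplus_n\pi_*(\mls F\otimes S^n\mls E)$ in bounded degree and then feed the finitely many coherent sheaves $\pi_*(\mls F\otimes S^d\mls E)$ into hypothesis $(ii)$. That argument is correct and self-contained. The $(i)\Rightarrow(ii)$ direction matches the paper exactly.

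The gap is in the steps where you push a surjection forward along a proper morphism and justify surjectivity of the pushforward by ``projection formula'' or ``relative Serre vanishing.'' In $(iii)\Rightarrow(i)$ you pass from $\mls O_P^{\ell}\twoheadrightarrow G(k)$ to $\mls O_X^{\ell}\twoheadrightarrow h_*(G(k))$; this requires $R^1h_*$ of the kernel to vanish, but that kernel is a \emph{different, untwisted} coherent sheaf for each $k$, so relative Serre vanishing (which kills $R^1h_*\mls M(n)$ only for $n\gg 0$ depending on a fixed $\mls M$) does not apply. The statement you actually need --- that $h_*(G(k))$ is globally generated for $k\gg 0$ whenever $\mls O_P(1)$ is ample --- is precisely Proposition~\ref{P:A.2b}, and its proof requires two further ideas: fix the evaluation surjection at a single twist $k_0$ so that Serre vanishing applies to the fixed kernel $\mls K(k-k_0)$, and then note that the source becomes $h_*\mls O_P(k-k_0)^{\oplus\ell}$ rather than $\mls O_X^{\oplus\ell}$, whose global generation is the $G=\mls O_P$ case of the very statement being proved; the paper breaks this circularity by reducing to $P=\mathbf{P}^N_X$, where $h_*\mls O_P(m)$ is visibly free. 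The same issue, in milder form, affects your surjection $\mls F_0\otimes S^m\mls E\twoheadrightarrow p_*\mathbf{G}(n_0+m)$ in $(i)\Rightarrow(iii)$ (fixable via coherence of $\bigoplus_n p_*\mathbf{G}(n)$ over $S^\bullet\mls E$, but not by the projection formula alone). Finally, concluding ``ampleness'' of $\mls O_P(1)$ from global generation of all twists is not automatic on an algebraic space --- you need Theorem~\ref{T:A.1}, which also shows $P$ is a scheme.
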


\begin{rem}
Even if $\mls E$ is $H$-ample on $\mls X$, the line bundle $\mls O_{\mathbf{P}(\mls E)}(1)$ won't in general be $H$-ample on $\mathbf{P}(\mls E)$ because the stack $\mathbf{P}(\mls E)$ is usually not cyclotomic.
\end{rem}

\begin{rem}
Condition (ii) in \ref{L:1} says that $\mls E$ is faithful and $\oplus_{n \geq 0} \pi_* S^n \mls E$ is an ample sheaf of graded algebras in the sense of \cite{Kubota}. The equivalence of conditions (ii) and (iii) is discussed in \cite{Kubota} when $X$ is a scheme and $\oplus_{n \geq 0} \pi_*S^n\mls E$ is generated in degree 1.
\end{rem}
\begin{proof}[Proof of Lemma \ref{L:1}]
To see that (i) implies (ii), let $F$ be a coherent sheaf on $X$. Since $\pi$ is a coarse moduli morphism we have an identification
\[
 F \otimes \pi_*S^n \mls E = \pi_*(\pi^*F \otimes S^n \mls E)
\]
for all $n$. On the other hand, since $\mls E$ is $H$-ample there is an integer $n_0$ such that for $n\geq n_0$ the right hand side is globally generated.

For (ii) implies (iii), by \ref{T:A.1} it is enough to show that if $G$ is a coherent sheaf on $P$ then there is an integer $n_0$ such that $n \geq n_0$ implies $G(n)$ is globally generated. 
Since $\mc O_P(1)$ is relatively ample, there is an integer $n_1$ such that for $n\geq n_1$ the canonical map
\[
h^*h_*G(n) \to G(n)
\]
is surjective. Setting $F = h_*G(n_1)$, we see that it is enough to show that there is an integer $n_0$ such that $h^*F(n)$ is generated by global sections for $n \geq n_0.$
But for $n$ sufficiently large we have an isomorphism and a surjection
\[
h^*(F \otimes \pi_*S^n \mls E)  \simeq h^*(h_*(h^*F(n))) \twoheadrightarrow h^*F(n)
\]
where the isomorphism follows from \cite[\href{https://stacks.math.columbia.edu/tag/0B5R}{Tag 0B5R}]{stacks-project} (note that to verify that the map is an isomorphism we may work \'etale locally on $X$ where this reference applies) and the surjection is again because $\mc O_P{(1)}$ is relatively ample.  Since $F \otimes \pi_*S^n \mls E$ is globally generated for large enough $n$ by assumption, so is $h^*F(n)$.

Finally, for (iii) implies (i), let $q: \mathbf{P}(\mc E) \to P$ be the coarse moduli map. For a coherent sheaf $\mc F$ on $\mc X$ we have $\mc F \otimes S^n \mc E \simeq p_*(p^*\mc F(n))$  and hence
\begin{equation*}
\pi _*(\mc F\otimes S^n\mc E)\simeq h_*q_*(p^*\mc F(n)).
\end{equation*}
We will show the right hand side is globally generated for large enough $n$. If we write $n = j + mN$ for some $j=0, \ldots, N-1$, then we have
\[
q_*(p^* \mc F(n)) = q_*(p^* \mc F(j)) \otimes \mc O_P(m)
\]
so it is enough to show that there is an integer $m_0$ such that if $G$ is a coherent sheaf on $P$ then $h_*(G(m))$ is globally generated for $m \geq m_0$. This is \ref{P:A.2b}.
\end{proof}

\begin{cor}\label{E:3.6}
    Let $\mc E$ be a faithful vector bundle on $\mc X$ and let $\mc L$ be an ample line bundle on the coarse space $X$.   Then for $m$ sufficiently large the vector bundle $\mc E(m):= \mc E\otimes \pi ^*\mc L^{\otimes m}$ is both $H$-ample and $\det$-ample on $\mls X$.
\end{cor}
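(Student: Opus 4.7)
The plan is to verify directly the three defining conditions for $\mc E(m)$: faithfulness, $\det$-ampleness, and $H$-ampleness. I would use criterion (iii) of Lemma \ref{L:1} for $H$-ampleness, so that the $\det$- and $H$-ampleness claims reduce to ampleness of explicit line bundles on the algebraic spaces $X$ and $P$ respectively. In both cases the line bundle in question is a ``relatively ample'' bundle twisted by a high power of $h^*\mc L$ (resp.\ the ample $\mc L$ itself on $X$), and the needed conclusion follows from the standard principle that an $h$-ample line bundle twisted by a sufficiently high power of a pulled-back ample bundle becomes ample on the total space (a fact available via Appendix A on algebraic spaces).

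Faithfulness is the easy input and is independent of $m$. For any geometric point $\bar x\to \mc X$, the one-dimensional representation $(\pi^*\mc L^{\otimes m})(\bar x)$ of $G_{\bar x}$ is trivial, since it is pulled back from the coarse space; hence $\mc E(m)(\bar x)\cong \mc E(\bar x)\otimes k_{\mathrm{triv}}$ is faithful because $\mc E(\bar x)$ is.

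For $\det$-ampleness I would use the identification $\det(\mc E(m))=\det(\mc E)\otimes \pi^*\mc L^{\otimes mr}$, where $r$ is the rank of $\mc E$. Choose $N$ with $\det(\mc E)^{\otimes N}$ descending to a line bundle $\mc M$ on $X$ (such $N$ exists because $\mc X$ is tame with finite diagonal). Then $\det(\mc E(m))^{\otimes N}$ descends to $\mc M\otimes \mc L^{\otimes mrN}$, and since $\mc L$ is ample on $X$, this tensor product is ample for $m\gg 0$.

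For $H$-ampleness I would exploit the canonical isomorphism $\mathbf{P}(\mc E(m))\cong \mathbf{P}(\mc E)$ over $\mc X$, under which $\mc O_{\mathbf{P}(\mc E(m))}(1)\cong \mc O_{\mathbf{P}(\mc E)}(1)\otimes p^*\pi^*\mc L^{\otimes m}$, where $p:\mathbf{P}(\mc E)\to \mc X$; in particular both stacks have the same coarse space $P$ with the same map $h:P\to X$. Fix $N$ large and divisible enough so that $\mc O_{\mathbf{P}(\mc E)}(N)$ descends to a line bundle $\mc O_P(1)$ on $P$. The same $N$ then works for $\mc O_{\mathbf{P}(\mc E(m))}(N)$, whose descent to $P$ takes the form $\mc O_P(1)\otimes h^*\mc L^{\otimes mN}$. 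The sheaf $\mc O_P(1)$ is $h$-ample by the discussion preceding Lemma \ref{L:1}, and $\mc L$ is ample on $X$, so for $m\gg 0$ the twisted bundle $\mc O_P(1)\otimes h^*\mc L^{\otimes mN}$ is ample on $P$. This verifies \ref{L:1}(iii) for $\mc E(m)$, completing the proof.

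The main (mild) obstacle is the bookkeeping of twisting sheaves under the identification $\mathbf{P}(\mc E(m))\cong \mathbf{P}(\mc E)$ and the passage to the coarse space $P$; once this is set up, the key geometric input — ``relatively ample tensor pull-back of ample is ample for large enough twist'' on an algebraic space — is the only substantive point, and it is exactly the sort of statement provided by Appendix A.
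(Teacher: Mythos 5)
Your proposal is correct and follows essentially the same route as the paper: faithfulness is immediate since the stabilizers act trivially on $\pi^*\mc L$, $\det$-ampleness follows from $\det(\mc E(m))\simeq\det(\mc E)\otimes\pi^*\mc L^{\otimes rm}$ and twisting up by the ample $\mc L$, and $H$-ampleness is checked via Lemma \ref{L:1}(iii) using $\mathbf{P}(\mc E(m))\simeq\mathbf{P}(\mc E)$ and the descent $\mc O_{\mathbf{P}(\mc E(m))}(N)\simeq q^*(\mc O_P(1)\otimes h^*\mc L^{\otimes mN})$. The only quibble is sourcing: the key fact that a relatively ample sheaf twisted by a high power of a pulled-back ample sheaf is ample is not quite what Appendix A provides; the paper instead cites \cite[\href{https://stacks.math.columbia.edu/tag/0892}{Tag 0892}]{stacks-project} for this step.
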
   
\begin{proof} Since $\mc E$ is faithful so is the sheaf $\mc E(m)$. 
 Consider the projective bundle $p:\mathbf{P}(\mc E)\rightarrow \mc X$ with coarse space $q: \mathbf{P}(\mls E) \to P$.  We have $\mathbf{P}(\mc E(m))\simeq \mathbf{P}(\mc E)$ but the universal bundles are different.  With $N$ and $\mls O_P(1)$ as in \ref{S:criteria} we have
 $$
 \mc O_{\mathbf{P}(\mc E(m))}(N)\simeq \mc O_{\mathbf{P}(\mc E)}(N)\otimes q^*h^*\mc L^{\otimes mN} = q^*(\mc O_P(1) \otimes h^* \mc L^{\otimes mN}).
 $$
 It follows from \cite[\href{https://stacks.math.columbia.edu/tag/0892}{Tag 0892}]{stacks-project} that the right hand side is ample for sufficiently large $m$.   Using \ref{L:1} we conclude that $\mc E(m)$ is $H$-ample for $m$ sufficiently large.  Furthermore, we have $\text{det}(\mls E(m))\simeq \text{det}(\mls E)\otimes \pi ^*\mls L^{\otimes rm}$, where $r$ is the rank of $\mls E$.  It follows that for $m$ sufficiently big some power of $\det(\mls E(m))$ descends to an ample line bundle on $X$ (see for example \cite[II Exercise 7.5 (b)]{MR0463157}); i.e., for such $m$ the vector bundle $\mls E(m)$ is $\det$-ample. 
\end{proof}

Related to this is the following result for line bundles:
\begin{lem}\label{E:cyclotomic} Let $\mls X$ be a cyclotomic stack over $R$ and let $\mls L$ be a line bundle on $\mls X$.  Then $\mls L$ is faithful if and only if $\mls L$ is uniformizing in the sense of \cite[2.3.11]{AH}, and $\mls L$ is $\det $-ample if and only if it is polarizing  in the sense of \cite[2.4.1]{AH}.  Furthermore $\mls L$ is $H$-ample if and only if $\mls L$ is $\det $-ample so that for line bundles the notions of $\det $-ample, $H$-ample, and polarizing are all equivalent.  
\end{lem}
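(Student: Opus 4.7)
The plan is to unwind the Abramovich--Hassett definitions of uniformizing and polarizing from \cite[2.3.11, 2.4.1]{AH} and reduce the main content of the lemma to Lemma \ref{L:1}.

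First I would verify the equivalence faithful $\Leftrightarrow$ uniformizing. At any geometric point $\bar x$ the stabilizer $G_{\bar x}$ is isomorphic to $\bmu_n$ for some $n$, and the action on the one-dimensional fiber $\mls L(\bar x)$ corresponds to a character $G_{\bar x} \to \Gm$, i.e.\ an element of the character group $\mathbf{Z}/n$. The faithfulness of this action is equivalent to the character being a generator of the character group, which is precisely the uniformizing condition. The identification of $\det$-ample with polarizing is then immediate: since $\det(\mls L) = \mls L$, Definition \ref{D:ampledef}(iii) asks that $\mls L$ be faithful and that some tensor power $\mls L^{\otimes N}$ descend to an ample line bundle on $X$, which matches \cite[2.4.1]{AH} once the previous equivalence is established.

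The heart of the proof is the equivalence $H$-ample $\Leftrightarrow$ $\det$-ample, and I would obtain this by invoking Lemma \ref{L:1}. The key observation is that for a line bundle $\mls L$ the relative Proj
\[
    p: \mathbf{P}(\mls L) = \Proj_{\mls X}(S^\bullet \mls L) \to \mls X
\]
is canonically an isomorphism, and under this identification $\mls O_{\mathbf{P}(\mls L)}(1)$ corresponds to $\mls L$ itself. Consequently the coarse space $P$ of $\mathbf{P}(\mls L)$ equals $X$. Taking $N$ to be any positive integer such that $\mls L^{\otimes N}$ descends to a line bundle on $X$ (for instance a common multiple of the orders of the stabilizers, using tameness), the line bundle $\mls O_P(1)$ appearing in Section \ref{S:criteria} is precisely the descent of $\mls L^{\otimes N}$. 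Applying Lemma \ref{L:1}(i)$\Leftrightarrow$(iii), $\mls L$ is $H$-ample if and only if $\mls L$ is faithful and this descent is ample on $X$, which is exactly the $\det$-ample condition for $\mls L$.

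I do not expect any substantive obstacle: the whole statement reduces to a direct comparison with the Abramovich--Hassett definitions together with the observation that the projective bundle of a line bundle is trivial. The one mildly subtle point is that the integer $N$ from the construction in Section \ref{S:criteria} and the $N$ from Definition \ref{D:ampledef}(iii) may differ, but this is harmless since ampleness of a line bundle is unaffected by passing to any positive tensor power.
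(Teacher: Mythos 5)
Your proposal is correct. The identifications faithful $\Leftrightarrow$ uniformizing and $\det$-ample $\Leftrightarrow$ polarizing are handled the same way as in the paper (which simply declares them immediate from the definitions). For the main equivalence $H$-ample $\Leftrightarrow$ $\det$-ample, however, you take a genuinely different route: you observe that $\mathbf{P}(\mls L)\to\mls X$ is an isomorphism with $\mls O_{\mathbf{P}(\mls L)}(1)\simeq\mls L$, so that $P=X$ and $\mls O_P(1)$ is the descent of $\mls L^{\otimes N}$, and then quote the equivalence (i)$\Leftrightarrow$(iii) of Lemma \ref{L:1}. The paper instead gives a self-contained direct argument: for $H$-ample $\Rightarrow$ $\det$-ample it computes $\pi_*(\pi^*\mls G\otimes\mls L^{\otimes nN})\simeq\mls G\otimes\mls M^{\otimes n}$ and invokes \ref{T:A.1}, and for the converse it writes $n=r+Ns$ and uses the projection formula $\pi_*(\mls F\otimes\mls L^{\otimes n})\simeq\pi_*(\mls F\otimes\mls L^{\otimes r})\otimes\mls M^{\otimes s}$ together with ampleness of $\mls M$ applied to the finitely many sheaves $\pi_*(\mls F\otimes\mls L^{\otimes r})$, $0\le r<N$. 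Your approach is shorter and reuses the general machinery (which ultimately rests on \ref{T:A.1} and \ref{P:A.2b} anyway), at the cost of being less elementary; the paper's version makes the line-bundle case transparent without appealing to the coarse space of a projective bundle. Your closing remark about the two integers $N$ is the right point to flag, and the justification you give (ampleness is detected on any positive tensor power, plus full faithfulness of $\pi^*$ to compare the two descents) is sound.
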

\begin{proof}
The first two statements are immediate from the definitions.  To prove the last statement, 
let $N$ be an integer annihilating all the stabilizer groups of $\mls X$ so that $\mls L^{\otimes N}\simeq \pi ^*\mls M$ for a line bundle $\mls M$ on $X$.  
For any coherent sheaf $\mls G$ on $X$ we have
$$
\pi _*(\pi ^*\mls G\otimes \mls L^{nN})\simeq \mls G\otimes \mls M^n,
$$
so if $\mls L$ is $H$-ample then by \ref{T:A.1} $\mls M$ is ample on $X$ so $\mls L$ is $\det $-ample.

Conversely assume that $\mls M$ is ample on $X$.  Let $\mls F$ be a coherent sheaf on $\mls X$ and let $n$ be an integer.  Write $n = r+Ns$ with $0\leq r<N$ so that $\mls F\otimes \mls L^{\otimes n}\simeq \mls F\otimes \mls L^{\otimes r}\otimes \pi ^*\mls M^{\otimes s}$.  Then we have
$$
\pi _*(\mls F\otimes \mls L^{\otimes n})\simeq (\pi _*(\mls F\otimes \mls L^r))\otimes \mls M^{\otimes s}.
$$
Since $\mls M$ is ample there exists an integer $s_0$ such that these sheaves are generated by global sections for $s\geq s_0$.  Taking $n_0 = s_0N$ we see that $\mls L$ is $H$-ample on $\mls X$.
\end{proof}

\subsection{Properties  of ample bundles}

We show that sums of ample bundles are ample (in either sense, see \ref{L:2}) and that ampleness is preserved by restriction along an immersion (again in either sense, see \ref{lem:det-immersion}).


\begin{lem}\label{L:3.5b} Let $\mls E_1$ and $\mls E_2$ be vector bundles on $\mls X$.  Then $\pi _*(\oplus _{n\geq 0}S^n(\mls E_1\oplus \mls E_2))$ is finitely generated as a module over $(\pi _*(\oplus _{n\geq 0}S^n\mls E_1))\otimes (\pi _*(\oplus _{n\geq 0}S^n\mls E_2)).$
\end{lem}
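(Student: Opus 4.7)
The plan is to reinterpret the assertion geometrically, reducing it to the finiteness of a morphism of coarse moduli spaces. Write $A_i := \pi_*\bigl(\bigoplus_{n\geq 0} S^n \mls E_i\bigr)$ for $i=1,2$ and $A_{12} := \pi_*\bigl(\bigoplus_{n\geq 0} S^n(\mls E_1 \oplus \mls E_2)\bigr)$. The natural identification of graded $\mls O_{\mls X}$-algebras $\bigoplus_n S^n(\mls E_1 \oplus \mls E_2) \simeq S^\bullet \mls E_1 \otimes_{\mls O_{\mls X}} S^\bullet \mls E_2$ yields $\mathbf{E}_1 \times_{\mls X} \mathbf{E}_2 \simeq \Spec_{\mls X}(S^\bullet(\mls E_1 \oplus \mls E_2))$. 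Since $\mls X$ is tame, forming the coarse moduli space of an affine morphism over $\mls X$ amounts to pushing forward the structure algebra, so the coarse space of $\mathbf{E}_1 \times_{\mls X} \mathbf{E}_2$ is $Y := \Spec_X(A_{12})$, while $E_1 \times_X E_2 = \Spec_X(A_1 \otimes_{\mls O_X} A_2)$. The inclusions $S^\bullet \mls E_i \hookrightarrow S^\bullet(\mls E_1 \oplus \mls E_2)$ induce the multiplication map $A_1 \otimes_{\mls O_X} A_2 \to A_{12}$, and the lemma is precisely the statement that the corresponding $X$-morphism $\bar\Phi: Y \to E_1 \times_X E_2$ is finite.

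To prove this finiteness, I would show that $\bar\Phi$ is both proper and affine. Consider the canonical composite
\[
\Phi: \mathbf{E}_1 \times_{\mls X} \mathbf{E}_2 \longrightarrow \mathbf{E}_1 \times_X \mathbf{E}_2 \xrightarrow{\pi_1 \times_X \pi_2} E_1 \times_X E_2,
\]
where $\pi_i: \mathbf{E}_i \to E_i$ denotes the coarse moduli map. The first arrow fits in a cartesian square whose right-hand edge is the diagonal $\Delta_{\mls X/X}: \mls X \to \mls X \times_X \mls X$, and is therefore finite by our standing assumption that $\mls X$ has finite diagonal. The second arrow is a fiber product over $X$ of coarse moduli maps of tame stacks, each of which is proper by Keel--Mori, and so it too is proper. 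Hence $\Phi$ is proper. Since $E_1 \times_X E_2$ is an algebraic space, $\Phi$ factors uniquely through the coarse moduli map $q: \mathbf{E}_1 \times_{\mls X} \mathbf{E}_2 \to Y$ as $\Phi = \bar\Phi \circ q$, and $\bar\Phi$ is proper because $q$ is proper and surjective.

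Affineness of $\bar\Phi$ is automatic, since $Y$ is affine over $X$ and $E_1 \times_X E_2$ is separated over $X$. A proper affine morphism of Noetherian algebraic spaces is finite, so $\bar\Phi$ is finite, which is the conclusion of the lemma. The only point that requires some care—more a matter of bookkeeping than a genuine obstacle—is the identification of the coarse space of $\mathbf{E}_1 \times_{\mls X} \mathbf{E}_2$ with $\Spec_X(A_{12})$; this can be verified étale-locally on $X$, where $\mls X \simeq [U/G]$ for a finite flat linearly reductive group scheme $G$ and the claim reduces to the fact that $[\Spec_U(C)/G]$ has coarse space $\Spec_{U/G}(C^G)$ for any $G$-equivariant quasi-coherent algebra $C$ on $U$.
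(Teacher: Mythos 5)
Your proof is correct and follows essentially the same route as the paper: you reinterpret the statement as finiteness of the induced map $E\to E_1\times_X E_2$ of coarse spaces, and you establish properness of that map via the identical factorization (the first arrow a base change of the finite diagonal $\mls X\to\mls X\times_X\mls X$, the second a product of proper coarse moduli maps) together with descent of properness along the surjective proper map from $\mathbf{E}$. The only divergence is the final step: the paper concludes finiteness from properness plus the claim that the map is a bijection on geometric points, whereas you conclude from properness plus affineness (both targets being affine over $X$). Your ending is, if anything, the more robust one --- the map $E\to E_1\times_X E_2$ need not actually be injective on geometric points (already for $\mls X=B(\Z/2)$ with $\mls E_1=\mls E_2$ the sign character, $E$ is the quadric cone $\Sp k[x^2,xy,y^2]$ mapping two-to-one onto $\mathbf{A}^2=\Sp k[x^2,y^2]$), so only quasi-finiteness holds there; ``proper and affine implies finite'' sidesteps that issue entirely.
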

\begin{proof}
Let $\mc E = \mc E_1 \oplus \mc E_2$ and let $\mathbf{E}, \mathbf{E}_1, \mathbf{E}_2$ and $E, E_1, E_2$ be the associated stacks and coarse spaces as in \ref{S:algebras}. We wish to show that 
\begin{equation}\label{eq:3.5b2}
E \to E_1 \times_X E_2
\end{equation}
is finite. The composition
\begin{equation}\label{eq:3.5b1}
\mathbf{E} = \mathbf{E}_1 \times_\mc X \mathbf{E}_2 \to \mathbf{E}_1 \times_X \mathbf{E}_2 \to E_1 \times_X E_2
\end{equation}
is proper: indeed, the first arrow is the base change of the finite diagonal $\mls X\rightarrow \mls X\times _X\mls X$, and the second arrow is a product of proper morphisms. On the other hand \eqref{eq:3.5b1} also factors as 
$\mathbf{E} \to E \to E_1 \times E_2,$
so by \cite[\href{https://stacks.math.columbia.edu/tag/0CQK}{Tag 0CQK}]{stacks-project} we have that \eqref{eq:3.5b2} is proper. But \eqref{eq:3.5b2} also induces a bijection on points with values in algebraically closed fields, so it is finite. 
\end{proof}

\begin{lem}\label{L:2} 
    Let $\mc E_1$ and $\mc E_2$ be two $H$-ample (resp. det-ample) vector bundles on $\mc X$.  Then $\mc E:= \mc E_1\oplus \mc E_2$ is a $H$-ample (resp. det-ample) vector bundle on $\mc X$.
\end{lem}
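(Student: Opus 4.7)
The strategy is to treat faithfulness, $\det$-ampleness, and $H$-ampleness separately. Faithfulness of $\mc E := \mc E_1 \oplus \mc E_2$ is immediate: an element of the stabilizer $G_{\bar x}$ acting trivially on $\mc E_1(\bar x) \oplus \mc E_2(\bar x)$ acts trivially on $\mc E_1(\bar x)$, so it is trivial by faithfulness of $\mc E_1$. For the $\det$-ample case, choose $N_i > 0$ so that $\det(\mc E_i)^{\otimes N_i}$ descends to an ample line bundle $\mc L_i$ on $X$. Since $\det(\mc E) = \det(\mc E_1) \otimes \det(\mc E_2)$, the $(N_1 N_2)$-th power of $\det(\mc E)$ descends to $\mc L_1^{\otimes N_2} \otimes \mc L_2^{\otimes N_1}$, which is ample as a tensor product of ample line bundles on the algebraic space $X$ (see Appendix A).

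For the $H$-ample case I plan to verify criterion (ii) of Lemma \ref{L:1}: for any coherent sheaf $F$ on $X$ I must produce an integer $n_0$ such that $F \otimes_{\mc O_X} \pi_* S^n \mc E$ is generated by global sections for $n \geq n_0$. Write $A_i := \bigoplus_{m \geq 0} \pi_* S^m \mc E_i$ and $A := \bigoplus_{m \geq 0} \pi_* S^m \mc E$; the decomposition $S^n \mc E = \bigoplus_{i+j=n} S^i \mc E_1 \otimes S^j \mc E_2$ refines the $\mathbf{N}$-grading on $A$ to a bigrading compatible with the natural one on $A_1 \otimes_{\mc O_X} A_2$. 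By Lemma \ref{L:3.5b} the sheaf of algebras $A$ is finitely generated as an $A_1 \otimes_{\mc O_X} A_2$-module, and by Lemma \ref{L:3.4b} the module $\mc M_F := \bigoplus_{m \geq 0} \pi_*(\pi^* F \otimes S^m \mc E)$ is finitely generated over $A$, hence also over $A_1 \otimes A_2$. Choosing bihomogeneous generators of $\mc M_F$ of bidegrees $(\tilde a_k, \tilde b_k)$, $k = 1, \dots, r$, and using the projection formula to identify $\pi_*(\pi^* F \otimes S^i \mc E_1 \otimes S^j \mc E_2) = F \otimes \pi_*(S^i \mc E_1 \otimes S^j \mc E_2)$, one obtains surjections
\[
\bigoplus_{k:\, \tilde a_k \leq i,\, \tilde b_k \leq j} (A_1)_{i - \tilde a_k} \otimes_{\mc O_X} (A_2)_{j - \tilde b_k} \twoheadrightarrow F \otimes_{\mc O_X} \pi_*\bigl(S^i \mc E_1 \otimes S^j \mc E_2\bigr).
\]
This reduces the claim to showing: there is an $F$-independent constant $M_0$ with $(A_1)_p \otimes_{\mc O_X} (A_2)_q$ globally generated whenever $p + q \geq M_0$.

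I would build $M_0$ by combining $H$-ampleness of $\mc E_1$ and $\mc E_2$ in two stages. Applying criterion (ii) of Lemma \ref{L:1} to each $\mc E_i$ with coherent sheaf $\mc O_{\mc X}$ produces integers $p_0, q_0$ with $(A_1)_p$ globally generated for $p \geq p_0$ and $(A_2)_q$ globally generated for $q \geq q_0$; when both hold, a surjection $\mc O_X^N \twoheadrightarrow (A_1)_p$ exhibits $(A_1)_p \otimes (A_2)_q$ as a quotient of $(A_2)_q^{\oplus N}$, which is globally generated. For each of the finitely many $p < p_0$, applying criterion (ii) of Lemma \ref{L:1} to $\mc E_2$ with the coherent sheaf $(A_1)_p$ on $X$ yields $q^*(p)$ so that $(A_1)_p \otimes (A_2)_q$ is globally generated for $q \geq q^*(p)$; setting $Q := \max_{p < p_0} q^*(p)$, and symmetrically $P$, the constant $M_0 := p_0 + q_0 + P + Q$ works, since every $(p,q)$ with $p + q \geq M_0$ falls into the ``both large'' region or into one of the two finite-exceptional regions. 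The main obstacle is precisely this uniformity in $(i, j)$: a naive application of $H$-ampleness of $\mc E_1$ to the coherent sheaf $\pi^* F \otimes S^j \mc E_2$ on $\mc X$ yields a bound on $i$ that depends on $j$, and Lemma \ref{L:3.5b} is what replaces this $j$-dependence by a bounded set of generator bidegrees $(\tilde a_k, \tilde b_k)$.
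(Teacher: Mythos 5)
Your treatment of faithfulness and of the $\det$-ample case is correct and matches the paper. The $H$-ample argument, however, has a genuine gap in the reduction step. The module $\mc M_F=\bigoplus_{i,j}\pi_*(\pi^*F\otimes S^i\mc E_1\otimes S^j\mc E_2)$ is finitely generated over $A_1\otimes_{\mc O_X}A_2$ only in the sheaf-theoretic (local) sense: there are finitely many generating \emph{bidegrees} $(\tilde a_k,\tilde b_k)$, not finitely many global generating sections. The correct surjection is therefore
\[
\bigoplus_{k}(A_1)_{i-\tilde a_k}\otimes_{\mc O_X}(A_2)_{j-\tilde b_k}\otimes_{\mc O_X}\bigl(F\otimes\pi_*(S^{\tilde a_k}\mc E_1\otimes S^{\tilde b_k}\mc E_2)\bigr)\twoheadrightarrow F\otimes\pi_*(S^i\mc E_1\otimes S^j\mc E_2),
\]
with the coherent generator modules $\mc G_k:=F\otimes\pi_*(S^{\tilde a_k}\mc E_1\otimes S^{\tilde b_k}\mc E_2)$ as extra tensor factors; your display drops them. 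Since $X$ is typically proper (not affine), the $\mc G_k$ need not be globally generated, and global generation of $(A_1)_{i-\tilde a_k}\otimes(A_2)_{j-\tilde b_k}$ only exhibits the target as a quotient of $\bigoplus_k\mc G_k^{\oplus N_k}$, from which global generation does not follow. So your final reduction --- that it suffices to find an $F$-independent $M_0$ with $(A_1)_p\otimes(A_2)_q$ globally generated for $p+q\geq M_0$ --- is not sufficient, even though that auxiliary claim itself is correct.

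The repair is to keep the factors $\mc G_k$ and apply $H$-ampleness to the $F$-dependent sheaves as well: first absorb $(A_2)_{j-\tilde b_k}\otimes\mc G_k$ into $F\otimes\pi_*(S^{\tilde a_k}\mc E_1\otimes S^{j}\mc E_2)$, then use $H$-ampleness of $\mc E_2$ applied to $\pi^*F\otimes S^{\tilde a_k}\mc E_1$ to make that factor globally generated for $j$ large, use $H$-ampleness of $\mc E_1$ for the factor $(A_1)_{i-\tilde a_k}$, and handle the finitely many small values of $i$ (resp.\ $j$) by applying $H$-ampleness of $\mc E_2$ (resp.\ $\mc E_1$) directly to $\pi^*F\otimes S^i\mc E_1$ (resp.\ $\pi^*F\otimes S^j\mc E_2$). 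This is exactly the structure of the paper's proof (its surjection \eqref{E:generators} retains the factor $\pi_*(\mls F\otimes S^{p_j}\mls E_1\otimes S^q\mls E_2)$, and its choices (a)--(d) supply the needed global generation); your two-region case analysis is the right shape but is aimed at the wrong sheaves.
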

\begin{proof}
Since $\text{det}(\mls E) \simeq \text{det}(\mls E_1)\otimes \text{det}(\mls E_2)$, and the tensor product of ample line bundles on $X$ is ample, the statement for $\det $-ample sheaves is immediate.

To prove the statement for $H$-ample line bundles we follow the argument of \cite[Proof of 2.2]{Hartshorneample}.
We have
$$
S^n\mc E\simeq \oplus _{p+q=n}S^p\mc E_1\otimes S^q\mc E_2,
$$
so it suffices to show that for any coherent sheaf $\mc F$ on $\mc X$ there exists an integer $n_0$ such that $\pi _*(\mc F\otimes S^p\mc E_1\otimes S^q\mc E_2)$ is generated by global sections whenever $p+q\geq n_0$. 
Since $\oplus _{p,q\geq 0}\pi _*(\mls F\otimes S^p\mls E_1\otimes S^q\mls E_2)$ is finitely generated over $(\oplus _{p\geq 0}\pi _*S^p\mls E_1)\otimes (\oplus _{q\geq 0}\pi _*S^q\mls E_2)$ by \ref{L:3.5b}, there is a finite set of pairs $\{(p_j, q_j)\}_{j=1}^w$ such that for any $p$ and $q$ the map
\[
\oplus _{j=1}^w\Big(\pi _*(S^{p-p_j}\mls E_1)\otimes \pi _*(S^{q-q_j}\mls E_2)\otimes \pi _*(\mls F\otimes S^{p_j}\mls E_1\otimes S^{q_j}\mls E_2)\Big)\rightarrow \pi _*(\mls F\otimes S^p\mls E_1\otimes S^q\mls E_2)
\]
is surjective. Multiplying the second two factors, we see that this map factors through
\begin{equation}\label{E:generators}
\oplus _{j=1}^w(\pi _*(S^{p-p_j}\mls E_1))\otimes \pi _*(\mls F\otimes S^{p_j}\mls E_1\otimes S^q\mls E_2)\rightarrow \pi _*(\mls F\otimes S^p\mls E_1\otimes S^q\mls E_2)
\end{equation}
and so for all $p$ and $q$ the map \eqref{E:generators} is surjective.

 Now choose integers as follows:
\begin{enumerate}
    \item [(a)] Choose $n_1>0$ such that $\pi _*S^{n-p_j}\mls E_1$ is globally generated for all $n\geq n_1$ and $j=1, \dots, w$.  This is possible because $\mls E_1$ is $H$-ample.
    \item [(b)] Choose $n_2$ such that $\pi _*(\mls F\otimes S^{p_j}\mls E_1\otimes S^n\mls E_2)$ is globally generated for all $n\geq n_2$ and $j=1, \dots, w$.  This is possible because $\mls E_2$ is $H$-ample.
    \item [(c)] For $r=0, \dots, n_1-1$ choose $m_r$ such that for all $n\geq m_r$ the sheaf $\pi _*(\mls F\otimes S^r\mls E_1\otimes S^n\mls E_2)$ is generated by global sections.
    \item [(d)] For $s=0, \dots, n_2-1$ choose $l_s$ such that for all $n\geq l_s$ the sheaf $\pi _*(\mls F\otimes S^n\mls E_1\otimes S^s\mls E_2)$ is generated by global sections for $s\geq l_s$.
\end{enumerate}
Now define $n_0:= \text{max}(r+m_r, s+l_s)$.  We claim that  for $p+q>n_0$ the sheaf $\pi _*(\mls F\otimes S^p\mls E_1\otimes S^q\mls E_2)$ is generated by global sections.  Indeed if $p<n_1$ then $q> m_p$ and we get the global generation by (c).  Similarly if $q<n_2$ then $p>l_s$ and (d) applies.  If both $p\geq n_1$ and $q\geq n_2$ then the source of the surjective map \eqref{E:generators} is globally generated by (a) and (b) so we get the result also in this case.
\end{proof}



\begin{lem}\label{lem:det-immersion}Let $\mls X, \mls X'$ be tame stacks of finite type with finite diagonal over a noetherian ring $R$ and let $i:\mls X' \to \mls X$ be an immersion. If $\mls E$ is an $H$-ample (resp. det-ample) vector bundle on $\mls X$, then $i^{*} \mls E$ is $H$-ample (resp. det-ample) on $\mls X'$.
\end{lem}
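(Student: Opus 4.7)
The plan is to reduce to the cases of open immersion and closed immersion separately, after first handling faithfulness. Because $i$ is a monomorphism, the map on stabilizer groups $G_{\bar x'}\to G_{i(\bar x')}$ is an isomorphism at every geometric point of $\mls X'$, and the fiber $(i^*\mls E)(\bar x')$ coincides as a representation with the faithful representation $\mls E(i(\bar x'))$, so $i^*\mls E$ is faithful. I would also record that, thanks to tameness, $i$ induces an immersion $\bar i\colon X'\to X$ on coarse spaces compatible with $\pi$ and $\pi'$: for open immersions this follows from the compatibility of coarse space formation with flat base change, and for closed immersions from the fact that closed substacks of tame stacks have coarse spaces equal to the corresponding closed subspaces of $X$.

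For det-ampleness, suppose $N>0$ is chosen so that $\det(\mls E)^{\otimes N}$ descends to an ample line bundle $\mls M$ on $X$. Then $\det(i^*\mls E)^{\otimes N}\simeq i^*\det(\mls E)^{\otimes N}$ descends to $\bar i^*\mls M$ on $X'$, and $\bar i^*\mls M$ is ample by standard facts on the behavior of ample line bundles under immersions of noetherian algebraic spaces (see Appendix A and \cite[\href{https://stacks.math.columbia.edu/tag/0892}{Tag 0892}]{stacks-project}).

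For $H$-ampleness, factor $i$ as an open immersion $j\colon\mls X'\hookrightarrow\mls X''$ followed by a closed immersion $k\colon\mls X''\hookrightarrow\mls X$, and handle each piece in turn. In the closed case, for $\mls F''$ coherent on $\mls X''$ set $\mls F=k_*\mls F''$, which is coherent. The projection formula (using that $S^n\mls E$ is locally free) together with $\pi\circ k=\bar k\circ\pi''$ gives
\[
\pi_*(\mls F\otimes S^n\mls E)\simeq \bar k_*\pi''_*(\mls F''\otimes S^n k^*\mls E),
\]
and for $n$ large the left-hand side is globally generated on $X$. Applying $\bar k^*$ and using $\bar k^*\bar k_*=\mathrm{id}$ shows that $\pi''_*(\mls F''\otimes S^n k^*\mls E)$ is globally generated on $X''$, so $k^*\mls E$ is $H$-ample. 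In the open case, use that $\mls X''$ is noetherian to extend any coherent $\mls F'$ on $\mls X'$ to a coherent sheaf $\mls F$ on $\mls X''$ with $j^*\mls F=\mls F'$, by writing $j_*\mls F'$ as a filtered union of coherent subsheaves and choosing one whose restriction is all of $\mls F'$. Flat base change along $\bar j$ then gives
\[
\bar j^*\pi''_*(\mls F\otimes S^n k^*\mls E)\simeq \pi'_*(\mls F'\otimes S^n i^*\mls E),
\]
and global generation is preserved under pullback to an open subspace.

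The main obstacle I anticipate is establishing carefully the compatibility $\pi\circ i=\bar i\circ\pi'$ in each of the two sub-cases, together with the existence of the coherent extension along the open immersion. Both are standard consequences of tameness and the noetherian hypothesis, but they are the technical hinges on which the reduction rests; everything else is a formal manipulation of the definitions.
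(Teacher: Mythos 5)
Your proposal is correct, and for the $H$-ample half it takes a genuinely different route from the paper. The paper never returns to the definition of $H$-ampleness: it invokes the criterion of \ref{L:1}, which reduces $H$-ampleness of $i^*\mls E$ to ampleness of the descended line bundle $\mls O_{P_{i^*\mls E}}(1)$ on the coarse space of $\mathbf{P}(i^*\mls E)$, and then observes that the induced map $P_{i^*\mls E}\to P$ is separated and quasi-finite, hence quasi-affine, so pullback preserves ampleness by \cite[\href{https://stacks.math.columbia.edu/tag/0892}{Tag 0892}]{stacks-project}. You instead argue directly from Definition \ref{D:ampledef}(ii), factoring $i$ as open-into-closed and using $k_*$ plus the projection formula in the closed case and coherent extension plus flat base change in the open case; this is longer and leans on the coherent-extension lemma and on the compatibility of coarse spaces with closed and open substacks, but it avoids the projectivization machinery of \ref{S:criteria} entirely. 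For the det-ample half the two arguments are essentially the same (restrict the descended ample line bundle along the induced map of coarse spaces), but note that the paper's justification sidesteps the technical hinge you flag: rather than proving that $\bar i:X'\to X$ is an immersion, it suffices to know that $\bar i$ is separated and quasi-finite (immediate from separatedness of $\mls X'$ over $X$ and finiteness of fibers), hence quasi-affine by \cite[\href{https://stacks.math.columbia.edu/tag/02LR}{Tag 02LR}]{stacks-project}, and quasi-affine pullback preserves ampleness. You could use the same observation to shorten your argument, and indeed to replace your entire open/closed decomposition if you first establish the equivalence in \ref{L:1}.
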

\begin{proof}
Since $i$ is representable, the bundle $i^*\mls E$ is faithful. So to show that $i^*$ preserves det-ampleness it is enough to show that restriction from $X$ to $X'$ preserves ample line bundles.
Since $\mls X'$ is separated over $X$ the map on coarse spaces $\bar i:X'\rightarrow X$ is separated and quasi-finite, and therefore quasi-affine \cite[\href{https://stacks.math.columbia.edu/tag/02LR}{Tag 02LR}]{stacks-project}.  From this and \cite[\href{https://stacks.math.columbia.edu/tag/0892}{Tag 0892}]{stacks-project} it follows that the pullback along $\bar i$ of an ample invertible sheaf on $X$ is ample on $X'$. Similarly, to prove the statement for $H$-ampleness it is enough to show that pullback along the canonical morphism $i_P: P_{i^*\mls E} \to P$ preserves ample bundles, where $P_{i^*\mls E}$ is the coarse space of $\mathbf{P}(i^*\mls E)$ and $P$ is the coarase space of $\mathbf{P}(\mls E)$. But $i_P$ is quasi-affine by the same reasoning as above.
\end{proof}

\subsection{Ample vector bundles and generating sheaves}

Recall from \cite[5.1]{OS03} that a vector bundle $\mls E$ on $\mls X$ is \emph{generating} if for all coherent sheaves $\mls F$ on $\mls X$ the evaluation map
$$
(\pi ^*\pi_*\mls Hom(\mls E, \mls F))\otimes \mls E\rightarrow \mls F
$$
is surjective.
By \cite[5.2 and Remark following it]{OS03} a vector bundle $\mls E$ on $\mls X$ is generating if and only if for every geometric point $\bar x\rightarrow \mls X$ every irreducible representation of $G_{\bar x}$ occurs in the representation $\mls E(\bar x)$.
Our goal in this section is to prove \ref{L:3.2}, which states that a vector bundle which is both generating and $H$-ample is det-ample.

\begin{lem}\label{L:3.15} Let $\mls E$ be an $H$-ample vector bundle which is also generating, and let $\mls F$ be a coherent sheaf on $\mls X$.  Then there exists an integer $n_0$ such that for all $n\geq n_0$ there exists an integer $r_n\geq 1$ and a surjection $\xymatrix{\mls E^{\oplus r_n}\ar@{->>}[r]& \mls F\otimes S^n\mls E.}$
\end{lem}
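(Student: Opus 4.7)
The plan is to combine the generating property with $H$-ampleness, applied to a cleverly chosen coherent sheaf. More precisely, since $\mls E$ is generating, for every coherent sheaf $\mls G$ on $\mls X$ the evaluation map $\pi^*\pi_*\mls Hom(\mls E,\mls G)\otimes \mls E\to \mls G$ is surjective. Applying this with $\mls G = \mls F\otimes S^n\mls E$ and using local freeness of $\mls E$ to identify $\mls Hom(\mls E,\mls F\otimes S^n\mls E)\simeq \mls E^\vee\otimes \mls F\otimes S^n\mls E$, we obtain a surjection
$$\pi^*\pi_*(\mls E^\vee\otimes \mls F\otimes S^n\mls E)\otimes \mls E\twoheadrightarrow \mls F\otimes S^n\mls E$$
for every $n\geq 0$.

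Next I would apply the hypothesis that $\mls E$ is $H$-ample to the coherent sheaf $\mls E^\vee \otimes \mls F$ on $\mls X$: by \ref{D:ampledef}(ii), there exists $n_0$ such that for all $n\geq n_0$ the sheaf $\pi_*(\mls E^\vee\otimes \mls F\otimes S^n\mls E)$ is generated by its global sections on $X$. For each such $n$, choose a finite collection of global sections providing a surjection $\mls O_X^{\oplus r_n}\twoheadrightarrow \pi_*(\mls E^\vee\otimes \mls F\otimes S^n\mls E)$ (this uses that $\pi_*(\mls E^\vee\otimes\mls F\otimes S^n\mls E)$ is coherent, which follows since $\pi$ is a proper coarse moduli morphism of a tame stack).

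Finally, pull this surjection back along $\pi$ (which is right exact) and tensor with $\mls E$ to get a surjection $\mls E^{\oplus r_n}\twoheadrightarrow \pi^*\pi_*(\mls E^\vee\otimes \mls F\otimes S^n\mls E)\otimes \mls E$; composing with the evaluation surjection above yields the desired surjection $\mls E^{\oplus r_n}\twoheadrightarrow \mls F\otimes S^n\mls E$. The argument is essentially formal once one has the right choice of auxiliary sheaf, so there is no real obstacle; the only subtlety is to remember that $\mls E^\vee$ (and hence $\mls E^\vee \otimes \mls F$) is coherent so that $H$-ampleness can be applied to it, and that right-exactness of $\pi^*$ preserves the surjectivity needed in the last step.
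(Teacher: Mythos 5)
Your proposal is correct and follows exactly the paper's argument: apply the generating property to $\mls F\otimes S^n\mls E$ to get the evaluation surjection, apply $H$-ampleness to the coherent sheaf $\mls E^\vee\otimes\mls F$ to globally generate $\pi_*(\mls E^\vee\otimes\mls F\otimes S^n\mls E)$ for $n\geq n_0$, and compose the pulled-back surjection from $\mls O_X^{\oplus r_n}$ with the evaluation map. No gaps.
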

\begin{proof}
    Since $\mls E$ is generating for all integers $n\geq 1$ the evaluation map
    $$
    (\pi ^*\pi _*\mls Hom(\mls E, \mls F\otimes S^n\mls E))\otimes \mls E\rightarrow \mls F\otimes S^n\mls E
    $$
    is surjective.   Now since $\mls E$ is $H$-ample there exists an integer $n_0$ such that for all $n\geq n_0$ the sheaf
    $$
    \pi _*\mls Hom(\mls E, \mls F\otimes S^n\mls E) = \pi _*(\mls E^\vee \otimes \mls F\otimes S^n\mls E)
    $$
is generated by global sections.  Choosing a surjection $\mls O_X^{r_n}\rightarrow \pi _*(\mls E^\vee \otimes \mls F\otimes S^n\mls E)$ for integers $r_n$ we obtain the result.
\end{proof}


\begin{lem}\label{L:3.16} Let $\mls E$ be a $H$-ample vector bundle on $\mls X$ which is also generating.  Then there exists an integer $n_0$ such that for all integers $n\geq n_0$ and $s\geq 1$ there exits an integer $r_{n, s}$ and a surjection
$$
\xymatrix{\mls E^{\oplus r_{n, s}}\ar@{->>}[r]& (S^n\mls E)^{\otimes s}.}
$$
\end{lem}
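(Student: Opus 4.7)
My plan is to prove \ref{L:3.16} by induction on $s \geq 1$, using Lemma \ref{L:3.15} as the engine. The only observation needed is that the uniform bound $n_0$ independent of $s$ can be obtained by applying Lemma \ref{L:3.15} to exactly two coherent sheaves, namely $\mls O_{\mls X}$ and $\mls E$ itself.

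First I would apply Lemma \ref{L:3.15} with $\mls F = \mls O_{\mls X}$, obtaining a threshold $n_0'$ and integers $a_n$ such that $\mls E^{\oplus a_n}\twoheadrightarrow S^n\mls E$ for all $n\geq n_0'$. Then I would apply it a second time with $\mls F = \mls E$, obtaining a threshold $n_0''$ and integers $b_n$ such that $\mls E^{\oplus b_n}\twoheadrightarrow \mls E\otimes S^n\mls E$ for all $n\geq n_0''$. Setting $n_0:= \max(n_0', n_0'')$, both surjections are available simultaneously for every $n\geq n_0$, and this is the $n_0$ that will serve in the conclusion.

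I then induct on $s$. The case $s=1$ is the first surjection above, so one takes $r_{n,1}:= a_n$. For the inductive step, assume for some $s\geq 1$ that $\mls E^{\oplus r_{n,s}}\twoheadrightarrow (S^n\mls E)^{\otimes s}$. Since $S^n\mls E$ is locally free, tensoring preserves surjections, and I obtain
$$
(\mls E\otimes S^n\mls E)^{\oplus r_{n,s}} \twoheadrightarrow (S^n\mls E)^{\otimes (s+1)}.
$$
Taking the direct sum of $r_{n,s}$ copies of the surjection $\mls E^{\oplus b_n}\twoheadrightarrow \mls E\otimes S^n\mls E$ produces $\mls E^{\oplus b_n r_{n,s}}\twoheadrightarrow (\mls E\otimes S^n\mls E)^{\oplus r_{n,s}}$, and composing yields the desired $\mls E^{\oplus r_{n,s+1}}\twoheadrightarrow (S^n\mls E)^{\otimes (s+1)}$ with $r_{n,s+1} := b_n r_{n,s}$.

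I do not anticipate a substantive obstacle. The only place one might get stuck is trying to run the induction using only the $\mls F=\mls O_{\mls X}$ instance: tensoring $\mls E^{\oplus a_n}\twoheadrightarrow S^n\mls E$ with $(S^n\mls E)^{\otimes (s-1)}$ gives the wrong shape on the left, and the entire role of the second application of Lemma \ref{L:3.15} is to convert the intermediate factor $\mls E\otimes S^n\mls E$ back into a direct sum of copies of $\mls E$ at a cost that only inflates $r_{n,s}$ by the bounded factor $b_n$.
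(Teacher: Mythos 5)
Your proof is correct and is essentially the paper's own argument: both apply Lemma \ref{L:3.15} to $\mls F=\mls O_{\mls X}$ and $\mls F=\mls E$ to get a single threshold $n_0$, then induct on $s$ by tensoring the inductive surjection with $S^n\mls E$ and converting the resulting factor $\mls E\otimes S^n\mls E$ back into a direct sum of copies of $\mls E$. (A minor remark: right-exactness of the tensor product already preserves surjections, so the local freeness of $S^n\mls E$ is not needed for that step.)
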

\begin{proof}
  By \ref{L:3.15} applied to $\mls F = \mls O_{\mls X}$ and $\mls F = \mls E$ there exists an integer $n_0$ such that for all $n\geq n_0$ the sheaves $S^n\mls E$ and $\mls E\otimes S^n\mls E$ are quotients of direct sums of copies of $\mls E$.  Fix an integer $n\geq n_0$.  By induction on $s$ we can find an integer $r_1$ and a surjection $\mls E^{\oplus r_1}\rightarrow (S^n\mls E)^{\otimes (s-1)}$.  Tensoring with $S^n\mls E$ we get a surjection
  $$
  (\mls E\otimes S^n\mls E)^{\oplus r_1}\rightarrow (S^n\mls E)^s.
  $$
Now choosing a surjection $\mls E^{\oplus r_2}\rightarrow \mls E\otimes S^n\mls E$ we get a surjection
$$
(\mls E^{\oplus r_2})^{\oplus r_1}\simeq \mls E^{\oplus r_1r_2}\rightarrow (S^n\mls E)^{\otimes s}.
$$
\end{proof}

\begin{lem}\label{L:3.17} Let $\xymatrix{\mls E\ar@{->>}[r]& \mls E'}$ be a surjective map of vector bundles such that $\mls E$ is $H$-ample.  Then for any coherent sheaf $\mls F$ on $\mls X$ there exists an integer $n_0$ such that for all $n\geq n_0$ the sheaf $\pi _*(\mls F\otimes S^n\mls E')$ is globally generated.
\end{lem}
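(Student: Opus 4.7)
The plan is to leverage the surjection $\mls E \twoheadrightarrow \mls E'$ together with the $H$-ampleness of $\mls E$ and the exactness of $\pi_*$ for tame stacks. The key observation is that global generation is preserved under taking quotients, so once we know $\pi_*(\mls F \otimes S^n \mls E)$ is globally generated we can deduce the same for any quotient.

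First I would apply the right-exact functor $S^n(-)$ to the surjection $\mls E \twoheadrightarrow \mls E'$ to obtain a surjection $S^n \mls E \twoheadrightarrow S^n \mls E'$ for every $n \geq 0$. Tensoring with $\mls F$ (which preserves surjectivity) yields a surjection
\[
\mls F \otimes S^n \mls E \twoheadrightarrow \mls F \otimes S^n \mls E'.
\]
Next, I would invoke the exactness of $\pi_*$ for a tame coarse moduli morphism (a defining property of tame stacks, see \cite[Theorem 3.2]{tame}) to conclude that the induced map
\[
\pi_*(\mls F \otimes S^n \mls E) \twoheadrightarrow \pi_*(\mls F \otimes S^n \mls E')
\]
is surjective for every $n$.

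Finally, since $\mls E$ is $H$-ample, there exists an integer $n_0$ such that $\pi_*(\mls F \otimes S^n \mls E)$ is generated by global sections for all $n \geq n_0$. Composing the surjection from a free sheaf $\mls O_X^{\oplus N_n} \twoheadrightarrow \pi_*(\mls F \otimes S^n \mls E)$ with the surjection displayed above, we obtain a surjection $\mls O_X^{\oplus N_n} \twoheadrightarrow \pi_*(\mls F \otimes S^n \mls E')$, proving that $\pi_*(\mls F \otimes S^n \mls E')$ is globally generated for $n \geq n_0$. There is no genuine obstacle here; the only subtle input is the tameness hypothesis, which is what makes $\pi_*$ exact and thereby allows surjectivity to descend from $\mls X$ to $X$.
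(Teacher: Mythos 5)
Your proof is correct and follows essentially the same route as the paper, which likewise observes that the induced map $\pi_*(\mls F\otimes S^n\mls E)\rightarrow \pi_*(\mls F\otimes S^n\mls E')$ is surjective and that global generation of the source implies global generation of the target. You have merely made explicit the intermediate steps (right-exactness of $S^n$ and of tensoring, and exactness of $\pi_*$ from tameness) that the paper leaves implicit.
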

\begin{proof}
    Indeed  the induced map $\pi _*(\mls F\otimes S^n\mls E)\rightarrow \pi _*(\mls F\otimes S^n\mls E')$ is surjective and global generation of the source implies global generation of the target.
\end{proof}

\begin{lem}\label{L:3.2} Let $\mls E$ be a $H$-ample vector bundle on $\mls X$ which is also generating.  Then $\mls E$ is det-ample.  
\end{lem}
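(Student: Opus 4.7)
The plan is to show that some power of $\det(\mls E)$ descends to an ample line bundle on the coarse space $X$.

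\textbf{Descent.} Since $\mls X$ has finite diagonal and is of finite type over the noetherian ring $R$, the orders of the stabilizer group schemes at geometric points of $\mls X$ are uniformly bounded (the inertia $I_{\mls X}\to \mls X$ is finite). Choose a positive integer $N_0$ divisible by each such order. Then the stabilizer character on $\det(\mls E)$ has trivial $N_0$-th power at every geometric point, so by the standard descent for sheaves with trivial stabilizer action on tame stacks, $\det(\mls E)^{\otimes N_0}$ descends to a line bundle $\mls M$ on $X$ with $\pi^*\mls M \simeq \det(\mls E)^{\otimes N_0}$. By \ref{T:A.1} and the projection formula, it suffices to prove: for every coherent sheaf $\mls F$ on $\mls X$, the sheaf $\pi_*(\mls F\otimes \det(\mls E)^{\otimes N})$ is globally generated on $X$ for all sufficiently large $N$.

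\textbf{A surjection onto $\det(\mls E)^{\otimes d_n}$.} Let $r$ be the rank of $\mls E$, choose $n$ large enough for \ref{L:3.16} to apply, and set $\rho_n := \binom{n+r-1}{r-1} = \mathrm{rk}(S^n\mls E)$ and $d_n := \binom{n+r-1}{r}$. A standard weight computation gives the identity $\det(S^n\mls E) \simeq \det(\mls E)^{\otimes d_n}$. The surjection from \ref{L:3.16} composed with the natural determinant surjection $V^{\otimes \rho_n}\twoheadrightarrow \wedge^{\rho_n} V = \det V$ applied to $V = S^n\mls E$ produces a surjection of vector bundles
\[
    \mls E^{\oplus r_{n,\rho_n}} \twoheadrightarrow (S^n\mls E)^{\otimes \rho_n} \twoheadrightarrow \det(\mls E)^{\otimes d_n}.
\]
The source is $H$-ample by \ref{L:2}, so Lemma \ref{L:3.17} applies: for every coherent sheaf $\mls G$ on $\mls X$ there is an integer $m_0(\mls G)$ such that $\pi_*(\mls G\otimes \det(\mls E)^{\otimes md_n})$ is globally generated for all $m \geq m_0(\mls G)$ (here one uses $S^m$ of the line bundle $\det(\mls E)^{\otimes d_n}$ being $\det(\mls E)^{\otimes md_n}$).

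\textbf{Covering all large $N$.} Apply the preceding step to each of the finitely many sheaves $\mls F\otimes \det(\mls E)^{\otimes k}$ for $k = 0,1,\ldots,d_n-1$, and let $M_0 := \max_{0\leq k<d_n}m_0\big(\mls F\otimes \det(\mls E)^{\otimes k}\big)<\infty$. For any $N \geq M_0 d_n$, writing $N = k + md_n$ with $0 \leq k < d_n$ gives $m = \lfloor N/d_n\rfloor \geq M_0 \geq m_0(\mls F\otimes \det(\mls E)^{\otimes k})$, and therefore
\[
    \pi_*(\mls F\otimes \det(\mls E)^{\otimes N}) = \pi_*\big((\mls F\otimes \det(\mls E)^{\otimes k})\otimes \det(\mls E)^{\otimes md_n}\big)
\]
is globally generated. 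This establishes the claim and completes the proof. The main technical point is this combinatorial covering: the construction in the previous paragraph produces surjections only onto the sparse subset of powers $\{\det(\mls E)^{\otimes md_n}\}_{m\geq 1}$, and one bridges the gap by treating all $d_n$ residue classes modulo $d_n$ uniformly using the finite family of twists $\{\mls F\otimes \det(\mls E)^{\otimes k}\}_{0\leq k<d_n}$.
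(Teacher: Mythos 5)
Your proof is correct and follows essentially the same route as the paper: both use \ref{L:3.16} together with the quotient $(S^{n}\mls E)^{\otimes \mathrm{rk}(S^n\mls E)}\twoheadrightarrow \det(S^n\mls E)$ to produce a surjection from a sum of copies of $\mls E$ onto a power of $\det(\mls E)$, then combine \ref{L:2}, \ref{L:3.17} and \ref{T:A.1} to get ampleness on the coarse space. The only (harmless) difference is bookkeeping: the paper applies \ref{L:3.17} directly to the descended line bundle $\pi^*\mls O_X(1)$, which makes your residue-classes-mod-$d_n$ covering step unnecessary.
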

\begin{proof}
    Note that as discussed in the proof of \cite[Proof of 2.6]{Hartshorneample} the determinant of $S^n\mls E$ is a positive power of $\text{det}(\mls E)$. Therefore it is enough to show that there exists an integer $n$ such that if $\text{det}(S^{n}\mls E)^{\otimes N}$ descends to an invertible sheaf $\mls O_X(1)$ for some $N>0$, then $\mls O_X(1)$ is ample.
    
    We take $n=n_0$ to be as in \ref{L:3.16}.  Let $s$ be the rank of $S^{n_0}\mls E$ so that $\text{det}(S^{n_0}\mls E)$ is a quotient of $(S^{n_0}\mls E)^{\otimes s}$.
    If $\text{det}(S^{n_0}\mls E)^{\otimes N}$ descends to a line bundle $\mls O_X(1)$ for some $N>0$, then by \ref{L:3.16} we have surjections
    \[
    \mls E^{\oplus r} \twoheadrightarrow (S^{n_0}\mls E)^{sN} \twoheadrightarrow \text{det}(S^{n_0}\mls E)^{\otimes N} = \pi^* \mls O_X(1)
    \]
    for some $r>0$. Since $\mls E^{\oplus r}$ is $H$-ample by \ref{L:2} we obtain from \ref{L:3.17} that for every coherent sheaf $F$ on $X$ there is an integer $m_0$ such that for all $m \geq m_0$ the sheaf
    \[
    \pi_*(\pi^*F \otimes S^m \pi^* \mls O_X(1)) = F \otimes \mls O_X(n)
    \]
    is globally generated. We conclude that $\mls O_X(1)$ is ample using \ref{T:A.1}.

\end{proof}

\section{Embeddings from ample vector bundles}\label{S:section4}

In this section we  explain how the existence of a $\det$-ample bundle on a proper stack $\mls X$ can be used to embed $\mls X$ into a stack of the form $\QP(V)$ \ref{prop:embed}. We proceed with notation as in \ref{P:1.9}.  So $R$ is a noetherian ring, and if $V$ is a finitely generated $R$-module with $\mathbf{GL}_r$-action, then $\QP (V)$ is the associated tame stack with finite diagonal, which comes equipped with a canonical $\det $-ample vector bundle $\mls E_{\QP (V)}$.

\subsection{Embeddings into $\QP(V)$}\label{S:embed}
Let $\mls X$ be a finite type tame Artin stack with finite diagonal over a noetherian ring $R$ and  let $\mls E$ be a $\det$-ample bundle on $\mls X$ of rank $r$. We now explain how $\mls E$ gives rise to an immersion of $\mls X$ into a stack of the form $\QP(V)$.

\begin{lem}\label{lem:find}
There is a collection of data $(m_0, N, V_1, V_2)$, where $m_0$ and $N$ are positive integers and $V_1$ and $V_2$ are finitely generated $R$-modules with $\mathbf{GL}_r$-action,
satisfying
\begin{enumerate}
\item The algebra $\oplus_{n \geq 0} \pi_*S^n (\mls E^{\oplus r})$ is generated by $\oplus_{0 \leq m \leq m_0} \pi_*S^m (\mls E^{\oplus r})$.
\item The line bundle $(\det(\mls E^{\oplus r}))^{\otimes N}$ descends to a very ample line bundle on $X$, denoted hereafter by $\mls O_X(1)$.
\item $V_1 \subset H^0(X, \mls O_X(1))$ is a finitely generated $R$-submodule and $\mathbf{GL}_r$-submodule inducing an immersion 
\[i:X \hookrightarrow \mathbf{P}(V_1),\] 
where $\mathbf{GL}_r$ acts on $\mls O_X(1)$ via the $N$th power of the determinant character. 
\item $V_2 \subset H^0(X, \oplus_{1 \leq m \leq m_0} \pi_*S^m(\mls E^{\oplus r})\otimes \mls O_X(1))$ is a finitely generated $R$-submodule and $\mathbf{GL}_r$-submodule inducing a surjection
\begin{equation}\label{eq:V2}
V_2 \otimes \mls O_X \to \oplus_{1 \leq m \leq m_0} \pi_*S^m(\mls E^{\oplus r})\otimes \mls O_X(1).
\end{equation}
\end{enumerate}

\end{lem}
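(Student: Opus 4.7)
The plan is to construct $(m_0, N, V_1, V_2)$ in order, using only finite generation of the relevant pushforward algebras together with the det-ampleness of $\mls E$. Condition (1) follows immediately from \ref{L:3.4b}(i) applied to $\mls E^{\oplus r}$: the graded $\mls O_X$-algebra $\bigoplus_{n\geq 0} \pi_*S^n(\mls E^{\oplus r})$ is finitely generated, and $m_0$ can be taken to be the maximum degree of a homogeneous set of generators. For $N$, the det-ampleness of $\mls E$ gives some $K$ with $\det(\mls E)^{\otimes K}$ descending to an ample line bundle $\mls L_0$ on $X$. Setting
\[
\mls G_k := \Bigl(\bigoplus_{1\leq m\leq m_0}\pi_*S^m(\mls E^{\oplus r})\Bigr)\otimes \mls L_0^{\otimes k},
\]
which is coherent by \ref{L:3.4b}(ii), I would pick $k$ large and divisible by $r$ so that simultaneously $\mls L_0^{\otimes k}$ is very ample and $\mls G_k$ is generated by global sections. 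Taking $N := Kk/r$ and $\mls O_X(1) := \mls L_0^{\otimes k}$, the identity $\det(\mls E^{\oplus r}) = \det(\mls E)^{\otimes r}$ shows $\det(\mls E^{\oplus r})^{\otimes N}$ descends to $\mls O_X(1)$, giving (2).

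For $V_1$, very ampleness of $\mls O_X(1)$ provides sections $s_1,\dots,s_p \in H^0(X,\mls O_X(1))$ defining a locally closed immersion $X \hookrightarrow \mathbf{P}_R^{p-1}$. The crucial observation is that $\mathbf{GL}_r$ acts on $\mls O_X(1)$, a line bundle on the coarse space, through a single character (a power of the determinant), so its action on global sections is by scaling and any $R$-submodule is automatically $\mathbf{GL}_r$-stable. Taking $V_1$ to be the $R$-span of $s_1,\dots,s_p$ gives (3).

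For $V_2$, set $\mls G := \mls G_k$. As a sheaf it carries a polynomial $\mathbf{GL}_r$-representation inherited from the standard action on $\mls E^{\oplus r}$ via symmetric powers, pushforward along $\pi$, and tensoring with the character line bundle $\mls O_X(1)$. The induced polynomial coaction $H^0(X,\mls G) \to H^0(X,\mls G)\otimes_R R[x_{ij}]$ is locally finite: for $t \in H^0(X,\mls G)$, expanding $t \mapsto \sum_i t_i \otimes m_i$ in the $R$-free monomial basis of $R[x_{ij}]$ has only finitely many nonzero terms, and coassociativity forces the $R$-span of the $t_i$ to be a sub-comodule. Global generation of $\mls G$ then lets me pick finitely many $t_1,\dots,t_q \in H^0(X,\mls G)$ generating $\mls G$, and I let $V_2$ be the sum of the resulting finite sub-comodules. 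By construction $V_2$ is a finitely generated $\mathbf{GL}_r$-stable $R$-submodule containing the $t_k$, so $V_2 \otimes \mls O_X \to \mls G$ is surjective.

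The main subtlety I expect is carrying out the local finiteness argument in the last step over a ring rather than a field, which is what the polynomial hypothesis (rather than a general $\mathbf{GL}_r$-action) buys us by giving an $R$-free comodule structure. The construction of $V_1$ and $V_2$ is then essentially bookkeeping once the descent of line bundles and the simultaneous very-ampleness and global-generation statements of the second step have been arranged.
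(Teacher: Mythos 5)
Your proposal is correct and follows essentially the same route as the paper: $m_0$ from finite generation of the pushforward algebra, $N$ and $\mls O_X(1)$ by twisting the descended determinant by a large power of an ample bundle so that very ampleness and global generation hold simultaneously, $V_1$ from very ampleness (with $\mathbf{GL}_r$-stability automatic since the action is through a character), and $V_2$ by enlarging a finite generating set of sections to a finitely generated $\mathbf{GL}_r$-stable submodule. The only real difference is that where the paper cites Thomason for the fact that a finitely generated $R$-submodule of a $\mathbf{GL}_{r,R}$-representation is contained in a finitely generated subrepresentation, you prove it directly via the standard local-finiteness argument for comodules over the $R$-free coalgebra $R[x_{ij}]$, which is a valid self-contained substitute.
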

\begin{proof}
First note that since $\mls E$ is $\det$-ample, the bundle $\mls E^{\oplus r}$ is also $\det$-ample by \ref{L:2}. We therefore have an ample line bundle $\mls O_X(1)$ on $X$ whose pullback to $\mls X$ is $\det(\mls E^{\oplus r})^{\otimes N}$ for some integer $N$. By \ref{L:3.4b} the algebraic space $\Spec_X(\oplus_{n \geq 0} \pi^*S^n (\mls E^{\oplus r}))$ is locally of finite type over $X$, so there is an integer $m_0$ such that $\oplus_{n \geq 0} \pi_*S^n \mls E^{\oplus r}$ is generated by $\oplus_{0 \leq m \leq m_0} \pi_*S^m \mls E^{\oplus r}$. We can therefore find an integer $n_0$ such that the following hold:
\begin{itemize}
\item The sheaf $\mls O_X(n)$ is very ample on $X$  for $n \geq n_0$.
\item The sheaf $\oplus_{1\leq m \leq m_0} \pi_*S^m \mls E ^{\oplus r}\otimes \mls O_X(n)$ is generated by global sections for $n \geq n_0$. 
\end{itemize}
By replacing $\mls O_X(1)$ with a positive multiple of itself we may take $n_0=1$. 

At this point we have found $m_0$ and $N$ as required in the lemma; the existence of $V_1$ and $V_2$ as $R$-modules follows from \cite[\href{https://stacks.math.columbia.edu/tag/01VR}{Tag 01VR}]{stacks-project}. Since $\mathbf{GL}_r$ acts on $\mls O_X(1)$ through a character $V_1$ is also a $\mathbf{GL}_r$-module, and by possibly enlarging $V_2$ we can also arrange that $V_2$ is $\mathbf{GL}_r$-invariant.  Indeed if $M$ is a $\mathbf{GL}_{r, R}$-representation over $R$ and $V\subset M$ is a finitely generated $R$-submodule then $V$ is contained in a finitely generated $\mathbf{GL}_r$-invariant submodule.  This is a classical fact in representation theory but can also be deduced from \cite[15.4]{MR1771927}.  Indeed using the dictionary between $\mathbf{GL}_{r, R}$-representations over $R$ and quasi-coherent sheaves on $B\mathbf{GL}_{r, R}$ we see that $M$ can be written as a filtered colimit of finitely generated subrepresentations, and since $V_2$ is finitely generated it must be contained in one of them. (Note that if $\mls X$ is proper then we can just take $V_1$ and $V_2$ to be the entire spaces of global sections).
\end{proof}

Let $H$ be the coarse moduli space of $\mathbf{H}:=\mathbf{A}_{\mls E^{\oplus r}}$. Recall from \ref{rem:homis} that $\mathbf{H} = \Hom_{\mls X}(\mls E, \mls O_{\mls X}^{\oplus r})$, so we have a morphism
\begin{equation}\label{eq:det2}
{\det}^N: \mathbf{H} \to \mathbf{A}_{\mls O_X(1)}.
\end{equation}
sending a map $\gamma: \mls E \to \mls O^{\oplus r}$ to $\det(\gamma)^{\otimes N}: \mls O_X(1) \simeq \det(\mls E)^{\otimes N} \to \mls O_X$. Let $\mathbf{I} \subset \mathbf{H}$ be the open substack where \eqref{eq:det2} is nonzero, and note that this agrees with the substack of the same name in the proof of \ref{P:2.3}. As explained there the projection $\mathbf{I} \to \mls X$ is a principal $\mathbf{GL}_r$-bundle.
Setting $V = V_1 \oplus V_2$ we have a sequence of $\mathbf{GL}_r$-equivariant morphisms
\begin{equation}\label{eq:immersion}
\mathbf{I} \xrightarrow{(f, \,\det^N)}H \times_X \mathbf{A}_{\mls O_X(1)} \xrightarrow{g} \mathbf{A}_{V}
\end{equation}
defined as follows. The morphism $f: \mathbf{I} \to H$ is the inclusion $\mathbf{I} \hookrightarrow \mathbf{H}$ followed by the coarse moduli map. The scheme $H \times_X \mathbf{A}_{\mls O_X(1)}$ is given explicitly by
\[
    H\times _X \mathbf{A}_{\mls O_X(1)} = \underline {\Sp }_X(S^\bullet (\mc E^{\oplus r}))\times _X\underline {\Sp }_X(S^\bullet(\mc O_X(1))) = \underline {\Sp }_X(\oplus _{n, m\geq 0}S^m(\mc E^{\oplus r})(n))
    \]
    so the arrow $g$ is induced by the subspaces $V_1$ and $V_2$.

\begin{rem}\label{R:4.3} The map $H\rightarrow \mathbf{A}_{\mls O_X(1)}$ defined by $\det ^N$ corresponds to a map $\delta :\mls O_X(1)\rightarrow \oplus _{n\geq 0}\pi _*S^n(\mls E^{\oplus r})$ over $X$. This map is equivariant with respect to the $\mathbf{GL}_r$-action.  In particular, composing the inclusion $V_2\subset H^0(X, \oplus _{n\geq 0}\pi _*S^n(\mls E^{\oplus r})(1))$ with the map 
$$
H^0(X, \oplus _{n\geq 0}\pi _*S^n(\mls E^{\oplus r})(1))\rightarrow H^0(X, \oplus _{n\geq 0}\pi _*S^n(\mls E^{\oplus r}))
$$
defined by $\delta $ we get a $\mathbf{GL}_r$-equivariant map $V_2\rightarrow H^0(X, \oplus _{n\geq 0}\pi _*S^n(\mls E^{\oplus r}))$ which defines a map $\mathbf{H}\rightarrow \mathbf{A}_{V_2}$ (as in \ref{P:2.3}, but see also \ref{R:2.8}) giving rise to the map obtained from \eqref{eq:immersion} by restriction to $\mathbf{I}$.
\end{rem}

\begin{thm}\label{prop:embed}
The composition \eqref{eq:immersion} is a immersion
factoring through $\mathbf{A}^{s,\; \det}_{V}$. Taking the quotient by $\mathbf{GL}_r$ we obtain an immersion $\mls X \to \QP(V)$.
\end{thm}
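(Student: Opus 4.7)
My plan is to reduce the theorem to showing that the $\mathbf{GL}_r$-equivariant morphism $\Phi: \mathbf{I} \to \mathbf{A}_V$ from \eqref{eq:immersion} is an immersion of algebraic spaces landing in $\mathbf{A}_V^{s,\det}$. Since $\mls E$ is faithful, the automorphism group of any $T$-point of $\mathbf{I} = \underline{\mathrm{Isom}}_{\mls X}(\mls E, \mls O_{\mls X}^{\oplus r})$ is trivial (an automorphism of a pair $(\bar x, \phi)$ must act trivially on $\bar x^*\mls E$, forcing it to be the identity), so $\mathbf{I}$ is an algebraic space and $\mathbf{I} \to \mls X$ is a $\mathbf{GL}_r$-torsor. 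Then $\mathbf{I}$ canonically identifies with $\mls X \times_{[\mathbf{A}_V/\mathbf{GL}_r]} \mathbf{A}_V$, and by smooth descent along the atlas $\mathbf{A}_V \to [\mathbf{A}_V/\mathbf{GL}_r]$ it suffices to verify (i) $\Phi$ is an immersion and (ii) $\Phi$ factors through $\mathbf{A}_V^{s,\det}$. The descended immersion $\mls X \hookrightarrow [\mathbf{A}_V^{s,\det}/\mathbf{GL}_r]$ then automatically lands in the maximal tame open substack $\QP(V)$, since $\mls X$ is itself tame and the stabilizers along the image coincide with those of $\mls X$.

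For (i), I would factor $\Phi$ as the open immersion $\mathbf{I} \hookrightarrow \mathbf{H}$ followed by $\mathbf{H} \to \mathbf{A}_V$. Since $\mathbf{A}_V$ is a scheme, the latter morphism factors through the coarse moduli map $\mathbf{H} \to H$, reducing the question to showing that the induced map $I \to \mathbf{A}_V$ of algebraic spaces is an immersion, where $I \subseteq H$ is the open locus $\{\det^N \neq 0\}$. The projection $I \to \mathbf{A}_{V_1}$ factors through $I \to X \hookrightarrow \mathbf{P}(V_1)$ (an immersion by Lemma \ref{lem:find}(3)), lifted via $\det^N$ to $\mathbf{A}_{\mls O_X(1)} \setminus \{0\} \hookrightarrow \mathbf{A}_{V_1} \setminus \{0\}$. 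The projection to $\mathbf{A}_{V_2}$ comes from the surjection \eqref{eq:V2}, which combined with the degree-$\leq m_0$ generation of $\oplus_{n \geq 0} \pi_*S^n(\mls E^{\oplus r})$ from Lemma \ref{lem:find}(1) yields, after the appropriate $\mls O_X(1)$-untwisting using the non-vanishing of $\det^N$, a closed immersion into a product with $\mathbf{A}_{V_2}$.

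For (ii), the image $\Phi(\mathbf{I})$ lies in the non-vanishing locus of the $V_1$-semi-invariant sections, combining the definition of $\mathbf{I}$ with Lemma \ref{lem:find}(3) and the fact that $\mathbf{GL}_r$ acts on $V_1$ via $\det^N$. This shows $\Phi(\mathbf{I}) \subseteq \mathbf{A}_V^{ss,\det}$. Once (i) is established, the quotient $[\Phi(\mathbf{I})/\mathbf{GL}_r]$ identifies with $\mls X$, which has finite diagonal, and equivariance together with the triviality of stabilizers on the torsor $\mathbf{I}$ forces the stabilizers along $\Phi(\mathbf{I})$ to be trivial and the orbits to be suitably closed. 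By the GIT stability criterion of Theorem \ref{T:GIT} and \ref{rem:git-detample}, we conclude $\Phi(\mathbf{I}) \subseteq \mathbf{A}_V^{s,\det}$.

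The main obstacle is the bookkeeping in (i): because the $V_2$-piece of the embedding is twisted by $\mls O_X(1)$ while the $V_1$-piece lives in the determinant line, combining the two projections into a single immersion requires using the non-vanishing of $\det^N$ from the $V_1$-coordinates to untwist $\mls O_X(1)$ and thereby recover the fiber coordinate of $\mathbf{H}$ over each point of $X$. Once this is handled cleanly, the combined map $I \to (\mathbf{A}_{V_1} \setminus \{0\}) \times_R \mathbf{A}_{V_2}$ is a closed immersion into an open subset of $\mathbf{A}_V$, finishing the argument.
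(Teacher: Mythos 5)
Your overall strategy coincides with the paper's: realize $\mathbf{I}=\underline{\mathrm{Isom}}_{\mls X}(\mls E,\mls O_{\mls X}^{\oplus r})$ as an algebraic space and a $\mathbf{GL}_r$-torsor over $\mls X$, check locally over the basic opens $D(s)$ for $s\in V_1$ that the equivariant map to $\mathbf{A}_{V}$ is an immersion using the generation statements (1) and (4) of \ref{lem:find} together with the unit $\det^N$ to untwist $\mls O_X(1)$, and then verify stability pointwise. Part (i) of your plan is essentially the paper's Lemma \ref{lem:embed} and is fine as a sketch.

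The gap is in (ii), the verification that the image lies in $\mathbf{A}^{s,\; \det}_{V}$. First, the stabilizers of the $\mathbf{GL}_r$-action on $\mathbf{I}$ are \emph{not} trivial: what is trivial is the automorphism group of a point of $\mathbf{I}$ as a stack (which is why $\mathbf{I}$ is an algebraic space), whereas the stabilizer of $\bar x\in\mathbf{I}$ for the $\mathbf{GL}_r$-action is isomorphic to the automorphism group of its image in $\mls X\simeq[\mathbf{I}/\mathbf{GL}_r]$ --- a finite but generally nontrivial group scheme. Finiteness (which is what \ref{D:B.7} requires) follows from finiteness of the diagonal of $\mls X$, so the conclusion survives, but your justification is incorrect. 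Second, and more seriously, ``the orbits to be suitably closed'' is precisely the nontrivial content of stability and cannot be extracted from equivariance alone; the detour through semistability does not supply it. The argument actually needed is: by the torsor property the orbit of $\bar x$ is the entire fiber $\mathbf{I}_{\bar y}$ over its image $\bar y\in X_s$; by the closed immersion from your part (i) this fiber is closed in $(\mathbf{A}_{V_2}\times L_s^*)_{\bar y}$; and finally $(\mathbf{A}_{V_2}\times L_s^*)_{\bar y}$ is closed in $(D(s)\times\mathbf{A}_{V_2})_{\kappa(\bar x)}$ because $\bar y$ is a closed point of the fiber of $D_+(s)$ over $\kappa(\bar x)$, which uses that $\mathbf{P}(V_1)$ is separated over $R$ (note $\mls X$ is not assumed proper, so one cannot appeal to properness of orbit maps). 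This separatedness step is absent from your outline and is exactly where the choice of $V_1$ inducing an immersion $X\hookrightarrow\mathbf{P}(V_1)$ is used a second time.
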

\begin{proof}
Set $L:= \mathbf{A}_{\mls O(1)}$. Consider a section $s\in V_1$ and the nonvanishing locus $D(s)\subset \mathbf{A}_{V_1}$ with corresponding open subset $D_+(s)\subset \mathbf{P}({V_1})$.  Let $X_s\subset X$ denote the open set $X\cap D_+(s)$ and let $L_s$ denote the restriction of $L$ to $X_s$ so we have a cartesian square
\begin{equation}\label{eq:immerse1}
\xymatrix{
L^*_s\ar[r]\ar[d]& D(s)\ar[d]\\
X_s\ar@{^{(}->}[r]^-i& D_+(s),}
\end{equation}
where the vertical morphisms are $\mathbf{G}_m$-torsors and $L^*\subset L$ is the complement of the zero section.  
Note also that $\mls O_X(1)$ is trivialized over $X_s$ by the section $s$ and therefore 
$L^*_s \simeq \mathbf{G}_{m, X_s}$

Let $H_s$ (resp. $\mathbf{I}_s$) denote the restriction of $H$ (resp. $\mathbf{I}$) to $X_s$.
Then \eqref{eq:immerse1} can be extended to a commuting diagram
\begin{equation}\label{eq:immerse2}
\begin{tikzcd}
& H_s \times_{X_s} L_s^* \arrow[r, "j"] \arrow[d] & \mathbf{A}_{V_2} \times L^*_s \arrow[d] \arrow[r] & \mathbf{A}_{V_2} \times D(s) \arrow[d] \\
\mathbf{I}_s \arrow[ur, "{(f,\, \det^N)}"]\arrow[r, "f"]& H_s \arrow[r]& X_s \arrow[r, hookrightarrow, "i"] & D_+(s) 
\end{tikzcd}
\end{equation}
where the rightmost square is fibered and the composition $H_s \times_{X_s} L_s^* \to \mathbf{A}_{V_2} \times D(s)$ is a restriction of $g$.

\begin{lem}\label{lem:embed}
The morphisms $j$ and $(f,\,\det^N)$ in \eqref{eq:immerse2} are closed immersions.

\end{lem}
\begin{proof}
Both sides of the morphism $j$ are affine over $X_s$ and $j$ is induced by a map of quasi-coherent sheaves of algebras
\begin{equation}\label{eq:immerse4}
\mls O_{X_s}[t^\pm ]\otimes _RS^\bullet V_2\rightarrow \mls O_{X_s}[t^\pm ] \otimes _{\mls O_{X_s}}(\oplus_{m \geq 0} \pi_*(S^m \mls E^{\oplus r})|_{X_s}).
\end{equation}
To describe the algebra map, note that we have an identification
\[
\mls O_{X_s}[t^\pm ] \otimes _{\mls O_{X_s}}(\oplus_{m \geq 0} \pi_*(S^m \mls E^{\oplus r})|_{X_s}) \simeq \oplus_{{n \in \mathbf{Z},\, m \geq 0}}\left( \mls O_{X_s}(n) \otimes_{\mls O_{X_s}} \pi_*(S^m \mls E^{\oplus r})|_{X_s})\right)
\]
sending $t$ to the section $s \in \mls O_{X_s}(1)$. Now \eqref{eq:immerse4} is induced by sending $v \in V_2$ to its natural image in the $n=1$ summand of the right hand side. By assumption (1) and the fact that $t$ is a unit, the right side is generated as an $\mls O_{X_s}[t^\pm ]$-algebra by $\oplus _{1\leq m\leq m_0}\pi _*S^m(\mls E^{\oplus r})(1)$, and therefore using (4) it is generated as an $\mls O_{X_s}[t^\pm ]$-algebra by the image of $V_2$ from which it follows that $j$ is a closed immersion.


We next note that $f: \mathbf{I} \to H$ is an open immersion. Indeed, $\mathbf{I}$ is an algebraic space since $\mls E$ is faithful, so its coarse moduli map is an isomorphism, and coarse moduli morphisms are compatible with pullback along open immersions. It follows that the square
\[
\begin{tikzcd}[column sep=.75in]
\mathbf{I}_s \arrow[r, "{(f,\,\det^N)}"] \arrow[d, "f"] & H_s \times_{X_s} L_s^* \arrow[d]\\
H_s \arrow[r, "{(id, \,\det^N)}"] & H_s \times_{X_s} L_s
\end{tikzcd}
\]
is fibered. Since the bottom arrow is a closed immersion, being a section of a separated morphism, the arrow $(f,\,\det^N)$ is also a closed immersion.

\end{proof}

Continuing with the proof of \ref{prop:embed}, it follows from the diagram \eqref{eq:immerse2} and \ref{lem:embed} that the restriction of \eqref{eq:immersion} to $\mathbf{I}_s$ is an immersion (recall that $i$ is an immersion by assumption (3) in \ref{lem:find}). Hence \eqref{eq:immersion} is an immersion.

Let $\bar x \to I$ be a geometric point and let $s\in V_1$ be a section which does not vanish at $\bar x$ (such a section exists because $X$ embeds into $\mathbf{P}(V_1)$). Let $D(s) \times \mathbf{A}_{V_2}$ be the corresponding basic open in $\mathbf{A}_V$ and let 
\[
(D(s) \times \mathbf{A}_{V_2})_{\kappa(\bar x)}
\]
denote the fiber over $\Sp(\kappa(\bar x)) \to R$. 

We claim the orbit of $\bar x$ in $(D(s) \times \mathbf{A}_{V_2})_{\kappa(\bar x)}$ is closed. For this let $\bar y  \to X_s $ be the image of $\bar x$ (we also write $\bar y \to \mathbf{P}(V_1)$ for the composition with the immersion $i$). Since $\mathbf{P}(V_1)$ is separated over $R$, it is enough to show that the orbit of $\bar x$ is closed in $(D(s) \times \mathbf{A}_{V_2})_{\bar y}$, but this is equal to the fiber $(\mathbf{A}_{V_2} \times L_s^*)_{\bar y}$ by the diagram \eqref{eq:immerse2}. By \ref{lem:embed} it is therefore enough to show that the orbit of $\bar x$ in $\mathbf{I}_{\bar y}$ is closed. But since $\mathbf{I} \to \mls X$ is a $\mathbf{GL}_r$-torsor the orbit of $\bar x$ is equal to $\mathbf{I}_{\bar y}$.

 Finally, since the diagonal of $\mls X$ is finite the stabilizer group scheme of $\bar x$ is also finite, which shows $\bar x \rightarrow \mathbf{A}_V$ is stable.
\end{proof}

We can now prove \ref{cor:4.6} from the introduction.

\begin{proof}{Proof of \ref{cor:4.6}}
Assume (1). Then $\mls X$ admits a faithful vector bundle \cite[5.5]{OS03}.  Tensoring with a sufficiently high power of the pullback of an ample line bundle on $X$ yields a vector bundle on $\mls X$ that is both $H$-ample and $\det$-ample by \ref{E:3.6}. So (2) holds.

It is clear that (2) implies (3), and (3) imples (4) by \ref{prop:embed}. Finally assume (4). Then $\mls X$ admits a $\det$-ample vector bundle by \ref{lem:det-immersion}, and in particular its coarse space is quasi-projective. We get a global quotient presentation for $\mls X$ by restricting the one for $\QP(V)$.
\end{proof}

\begin{example}\label{ex:P13} Consider the weighted projective stack $\mls P(1,3)=[\mathbf{A}^2_{x, y}-\{0\}/\mathbf{G}_m]$ over a ring $R$, where $\mathbf{G}_m$ acts on $x$ with weight $1$ and on $y$ with weight $3$.  The coarse space is $\mathbf{P}^1=[\mathbf{A}^2_{w, y}-\{0\}/\mathbf{G}_m]$ with the map $\pi :\mls P(1, 3)\rightarrow \mathbf{P}^1$ induced by the map $R[w, y]\rightarrow R[x, y]$, $w\mapsto x^3$ and $y\mapsto y$, which is equivariant with respect to the morphism $\mathbf{G}_m\rightarrow \mathbf{G}_m$ sending $u$ to $u^3$. 

The stack $\mls P(1,3)$ is the third root stack associated to the divisor $p = [0:1]\in \mathbf{P}^1$, and we define $\mls E$ to be the line bundle on $\mls P(1, 3)$ satisfying
\begin{equation}\label{eq:P13.1}
\mls E^{\otimes 3} = \pi^* \mls O_{\mathbf{P}^1}(p).
\end{equation}
We have an identification of graded $R$-algebras
\begin{equation}\label{eq:P13.2}\oplus _{n\geq 0}H^0(\mathbf{P}^1, \pi_*S^n\mls E) = \oplus _{n\geq 0}H^0(\mathbf{P}^1, \pi_*\mls E^{\otimes n}) \simeq  R[x, y].\end{equation}


We compute an embedding of $\mls P(1, 3)$ determined by $\mls  E$ as in \ref{prop:embed}. From \eqref{eq:P13.1} we can take $N=3$ and from \eqref{eq:P13.2} we can take $m_0=3$ in \ref{lem:find}. Again using \eqref{eq:P13.2} we find
\[
V_1 = H^0(\mathbf{P}^1, \mls O_{\mathbf{P}^1}(1)) = H^0(\mathbf{P}^1, \pi_*\mls E^{\otimes 3}) = Rx^3 \oplus Ry
\]
and similarly
\begin{align*}
V_2 &= \oplus_{m=1, 2, 3} H^0(\mathbf{P}^1, (\pi_*\mls E^{\otimes m}) \otimes \mls O_{\mathbf{P}^1}(1)) = \oplus_{m=4, 5, 6} H^0(\mathbf{P}^1, \pi_*\mls E^{\otimes m})\\
&= (Rx^4 \oplus Rxy) \oplus (Rx^5 \oplus Rx^2y) \oplus (Rx^6 \oplus Rx^3y \oplus Ry^2).
\end{align*}
Hence the immersion of \ref{prop:embed} is the morphism
$$
\mls P(1,3)\hookrightarrow \mls P(3,3,4,4,5,5,6,6,6)
$$
given in coordinates by
$$
[a:b]\mapsto [a^3:b:a^4:ab: a^5:a^2b: a^6:a^3b: b^2].
$$

\end{example}

\begin{rem}
Compare Example \ref{ex:P13} with the construction in \cite[Cor 2.4.4]{AH} which uses $\mls E$ to embed $\mls P(1, 3)$ into the weighted projective stack $\mls P(1, 2, 3, 3).$ 
\end{rem}

\begin{rem}
In Example \ref{ex:P13}, the stack $\mls P(1, 3)$ is equal to $\QP(V)$ for some $\Gm$-module $V$ and the bundle $\mls E$ is the canonical $\det$-ample bundle $\mls E_{\QP(V)}$. We see that in the embedding induced by the canonical $\det$-ample bundle on a stack $\QP(V)$ is not the identity in general.
\end{rem}

\subsection{Recovering the data defining the immersion}\label{S:recover}
In this section we explain how from the immersion $\mls X \to \QP(V)$ we can recover $\mls E$ as well as the tuple $(m_0, N, V_1, V_2)$ used to construct it. In particular we recover the embedding $X \to \mathbf{P}(V_1)$. 

To begin, note that there is a rational map $\QP(V) \dashrightarrow \mathbf{P}(V_1)$ induced by the projection $\QP(V_1 \oplus V_2) \dashrightarrow  [\mathbf{A}_{V_1}-\setminus \{0\}/\mathbf{GL}_r]$ followed by the morphism
\[
[(\mathbf{A}_{V_1} \setminus \{0\}) / \mathbf{GL}_r] \to [(\mathbf{A}_{V_1} \setminus \{0\}) / \Gm]
\]
induced by the $N^{th}$ power of the determinant character $\det^N: \mathbf{GL}_r \to \Gm$. The pullback of $\mls O_{\mathbf{P}(V_1)}(1)$ under this rational map is thus canonically identified with the restriction of $\det(\mls E_{\QP(V)})^{\otimes N}$.

Now we recover the data $(\mls E, m_0, N, V_1, V_2)$ from the immersion.

First, we let $\mls E$ be the restriction of $\mls E_{\QP(V)}$. This recovers the original ample bundle because the immersion $\mls X \to \QP(V)$ is the quotient of the $\mathbf{GL}_r$-equivariant morphism \eqref{eq:immersion}.

Next, from the universal property of the coarse moduli map $\mls X \to X$ we have a unique morphism $X \to \mathbf{P}(V_1)$ fitting in a commuting diagram
\begin{equation}\label{eq:p17.1}
\begin{tikzcd}
\mls X \arrow[r, hookrightarrow] \arrow[d, "\pi"] & \QP(V_1 \oplus V_2) \arrow[d, dashrightarrow] \\
X \arrow[r, "i'"] & \mathbf{P}(V_1).
\end{tikzcd}
\end{equation}
The given morphism $i: X \to \mathbf{P}(V_1)$ also fits into such a diagram, so we have $i=i'$. Moreover both $i'^*\mls O_{\mathbf{P}(V_1)}(1)$ and $\mls O_X(1)$ pull back to $\det(\mls E)^{\otimes  N}$ on $\mls X$, so since $\pi^*$ on quasi-coherent sheaves is fully faithful  we have an identification $i'^*\mls O_{\mathbf{P}(V_1)}(1) \simeq \mls O_X(1)$. In this way we recover $V_1 \subset H^0(X, \mls O_X(1))$.

Finally from the universal property of morphisms to $\QP(V)$, we see that the immersion $\mls X \to \QP(V)$ induces a map
\[
V_2 \otimes \mls O_X \to \oplus_{m \geq 0} \pi_*S^m(\mls E^{\oplus r})
\]
(see \ref{eq:2.3-ring}). Multiplying by the section $\det^{-N}$ of $H^0(I, \mls O_X(1)|_{I})$ gives a map 
\begin{equation}\label{eq:p17.2}
V_2 \to H^0(X, \oplus_{m \geq 0} \pi_*S^m(\mls E^{\oplus r})(1))
\end{equation}
which by the discussion in \ref{R:4.3}  equals the starting inclusion of $V_2$ in the right hand side. 



\section{Relatively ample vector bundles}\label{S:section6}

We now generalize the discussion in sections 3 and 4 by replacing the base ring $R$ with a scheme $S$ (which we will require to be quasi-compact, or locally noetherian, or both).

\subsection{Fiberwise criterion for ampleness}
We first show that ampleness of a vector bundle on $\mls X$ can be checked on fibers.

\begin{prop}\label{P:6.3} Let $S$ be a locally noetherian  scheme, let $f:\mls X\rightarrow S$ be a 
proper tame Artin stack over $S$, and let $\mls E$ be a vector bundle on $\mls X$.  If for some point $s\in S$ the restriction $\mls E_s$ of $\mls E$ to the fiber $\mls X_s$ of $\mls X$ over $s$ is det-ample (resp. $H$-ample) then there exists an affine open neighborhood $s\in U\subset S$ containing $s$ such that the restriction of $\mls E$ to $f_U:\mls X_U := f^{-1}(U)\rightarrow U$ is det-ample (resp. $H$-ample).  
\end{prop}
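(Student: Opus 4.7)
The plan is to handle the det-ample and $H$-ample cases in parallel, reducing each to two propagation statements: faithfulness of $\mls E$, and ampleness of a line bundle on the coarse space of a suitable auxiliary proper stack over $S$.

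First I would argue that faithfulness is an open condition on $S$. The closed substack $Z\subset I_{\mls X}$ of the inertia stack consisting of nontrivial stabilizer elements acting trivially on $\mls E$ has image in $\mls X$ closed (since $I_{\mls X}\to \mls X$ is finite), hence image in $S$ closed (since $\mls X\to S$ is proper). As $\mls E_s$ is faithful, this closed subset of $S$ misses $s$, so after shrinking $S$ to an affine open neighborhood we may assume $\mls E$ is faithful on all of $\mls X$ in the sense of \ref{D:ampledef}(i).

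Second, I would produce a uniform integer $N$ such that $\det(\mls E)^{\otimes N}$ (in the det-ample case) and $\mls O_{\mathbf{P}(\mls E)}(N)$ (in the $H$-ample case) descend to line bundles on their respective coarse spaces. By the local structure theorem for tame stacks \cite[Thm~3.2]{tame}, \'etale-locally on the coarse space $X$ the stack $\mls X$ has the form $[V/G]$ with $G$ a finite linearly reductive group scheme. Covering the quasi-compact fiber $X_s$ by finitely many such charts and spreading them out (using noetherianity of $S$) to an open neighborhood of $s$, we may assume after shrinking $S$ that the orders of the groups appearing are bounded by a uniform integer $N$. Since $\mathbf{P}(\mls E)\to \mls X$ is representable, its stabilizers inject into those of $\mls X$, so the same $N$ suffices there. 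Because on a tame stack a line bundle descends to the coarse space iff the stabilizer action on all geometric fibers is trivial, $\det(\mls E)^{\otimes N}$ descends to a line bundle $\mls M$ on $X$ and $\mls O_{\mathbf{P}(\mls E)}(N)$ descends to a line bundle $\mls O_P(1)$ on the coarse space $P$ of $\mathbf{P}(\mls E)$; compatibility of coarse spaces with base change (from the Conventions) ensures $\mls M|_{X_s}$ and $\mls O_{P_s}(1)$ are the corresponding descended sheaves on the fibers.

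Finally I would spread ampleness. Both $X\to S$ and $P\to S$ are proper morphisms of algebraic spaces (as composites of proper morphisms). In the det-ample case, after absorbing a further multiplicative factor into $N$, the restriction $\mls M|_{X_s}$ is ample by the hypothesis on $\mls E_s$; in the $H$-ample case \ref{L:1} gives $\mls O_{P_s}(1)$ ample on $P_s$. Openness of ampleness of a line bundle in a proper family over a locally noetherian base, i.e.\ the algebraic-space analogue of \cite[\href{https://stacks.math.columbia.edu/tag/0D3A}{Tag 0D3A}]{stacks-project} (available via the Appendix A material on ample line bundles on algebraic spaces), then furnishes an affine open $U\ni s$ over which the descended line bundle is ample. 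The main technical obstacle is the uniform descent step above, requiring a single $N$ to kill every stabilizer character in a neighborhood of $s$; once that is handled, the rest reduces to standard openness principles (a closed-image argument on inertia for faithfulness, and openness of ampleness for a line bundle on a proper family).
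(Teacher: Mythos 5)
Your overall strategy is the same as the paper's: propagate faithfulness from the fiber to a neighborhood via a closed-image argument on the inertia, descend a power of $\det(\mls E)$ (resp.\ of $\mls O_{\mathbf{P}(\mls E)}(1)$) to the coarse space $X$ (resp.\ $P$), translate the hypothesis into ampleness of the descended line bundle on the fiber $X_s$ (resp.\ $P_s$, via \ref{L:1}), and conclude by openness of ampleness in a proper family \cite[\href{https://stacks.math.columbia.edu/tag/0D3A}{Tag 0D3A}]{stacks-project}. Steps two through four are fine (your uniform-$N$ argument via the local structure theory is more elaborate than the paper's, which simply invokes quasi-compactness, but it is correct, and you rightly track compatibility of coarse spaces with base change).

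The one step that does not survive scrutiny as written is the faithfulness step. You describe the bad locus as ``the closed substack $Z\subset I_{\mls X}$ of nontrivial stabilizer elements acting trivially on $\mls E$.'' There are two problems. First, the set of \emph{nontrivial} elements of the kernel $K=\ker(I_{\mls X}\to\mathbf{GL}(\mls E))$ is the complement of the identity section, which is open in $K$, not closed. Second, and more seriously, in positive characteristic the kernel at a point can be a nontrivial \emph{infinitesimal} group scheme --- e.g.\ $\bmu_p$ acting trivially on $\mls E(\bar x)$, which is permitted for a tame stack --- whose only geometric point is the identity. At such a point $\mls E$ is not faithful, yet your $Z$ is empty over it, so the image of $Z$ in $S$ can fail to contain the image of the non-faithful locus and the shrinking argument breaks. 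The repair is to detect non-faithfulness scheme-theoretically rather than pointwise: the paper (Lemma \ref{L:5.3b}) writes $I_{\mls X}\to\mathbf{GL}(\mls E)$ as a map of coherent sheaves of algebras $S^\bullet(\mls E\otimes\mls E^\vee)_{\det}\to\mls A$ and observes by Nakayama that the locus where this map is surjective (equivalently, where the homomorphism is a closed immersion, i.e.\ where $\mls E$ is faithful in the sense of \ref{D:ampledef}(i)) is open; equivalently, the non-faithful locus is the support of the augmentation ideal of $\mls O_K$, which is genuinely closed. With that substitution your proof goes through and coincides with the paper's.
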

\begin{proof}
    After shrinking on $S$ around $s$ we may assume that $S$ is quasi-compact and in that case we can choose an integer $N$ such that $\det (\mls E)^{\otimes N}$ descends to an invertible sheaf $\mls O_X(1)$ on $X$. The next lemma says that after further shrinking on $S$ around $s$ we may assume that $\mls E$ is faithful.   

\begin{lem}\label{L:5.3b} If $\mls E_s$ is faithful then there exists an open neighborhood of $s$ in $S$ over which the vector bundle $\mls E$ on $\mls X$ is faithful.
\end{lem}
\begin{proof}
    Let $I_{\mls X}\rightarrow \mls X$ denote the inertia stack (which is finite over $\mls X$ since $\mls X$ has finite diagonal).  Consider the natural homomorphism of relative group schemes  $\rho :I_{\mls X}\rightarrow \mathbf{GL}(\mls E)$ over $\mls X$.  Since both source and target is affine over $\mls X$ this map $\rho $ is given by a map of quasi-coherent sheaves of algebras 
    \begin{equation}\label{eq:p18}
    S^\bullet (\mls E\otimes \mls E^\vee )_{\text{det}}\rightarrow \mls A,
    \end{equation}
    where on the left we consider the localization at the determinant of the symmetric algebra on $\mls E\otimes \mls E^\vee $ and the target $\mls A$ is coherent on $\mls X$ since $I_{\mls X}$ is finite over $\mls X$.   By Nakayama's lemma the locus in $\mls X$ where this map is surjective is open.  Let $\mls Z\subset \mls X$ be the complement.  Since $f$ is proper the image $f(\mls Z)\subset S$ is closed and does not meet $s$.  Replacing $S$ by the complement of $f(\mls Z)$ we can then arrange that $\mls Z = \emptyset $, which implies that $\mls E$ is faithful. 
\end{proof}

To prove the Lemma for $H$-ampleness, 
 consider the projective bundle $\mathbf{P}(\mls E)$ and its associated coarse moduli space $P\rightarrow X$.  Let $N>0$ be an integer such that $\mls O_{\mathbf{P}(\mls E)}(N)$ descends to an invertible sheaf $\mls O_P(1)$ on $P$.  If $\mls E_s$ is $H$-ample then by \ref{L:1} the sheaf $\mls O_P(1)$ restricts to an ample invertible sheaf on the fiber $P_s$. By \cite[\href{https://stacks.math.columbia.edu/tag/0D3A}{Tag 0D3A}]{stacks-project} we have that $\mls O_P(1)$ is ample on $P$ over some neighborhood of $s$ in $S$. So $\mls E$ is $H$-ample on this neighborhood by \ref{L:1}.

Similarly, if $\mls E_s$ is $\det$-ample then $\mls O_X(1)$ restricts to an ample invertible sheaf on $X_s$. Again using \cite[\href{https://stacks.math.columbia.edu/tag/0D3A}{Tag 0D3A}]{stacks-project} we have that $\mls O_X(1)$ is ample on $X$ over some neighborhood of $s$ in $S$. So $\mls E$ is $\det$-ample on this neighborhood.

\end{proof}

\subsection{Relatively ample sheaves over a noetherian base}

Following the definition for schemes \cite[\href{https://stacks.math.columbia.edu/tag/01VH}{Tag 01VH}]{stacks-project}, we define the relative notion of an ample vector bundle on a stack. 

\begin{defn} Let $S$ be a noetherian scheme, let $f:\mls X\rightarrow \mls Y$ be a morphism of tame finite type $S$-stacks, and let $\mls E$ be a vector bundle on $\mls X$.  We say that $\mls E$ is \emph{relatively det-ample} (resp. \emph{relatively H-ample}) 
if for every affine scheme $U$ of finite type over $S$ and $S$-morphism $u:U\rightarrow \mls Y$ the pullback $\mls E_U$ of $\mls E$ to $\mls X_U:= \mls X\times _{\mls Y}U$ is $\det$-ample (resp. $H$-ample). \end{defn}

\begin{rem} One might also use the term \emph{$\det$-ample on $\mls X/\mls Y$} (resp. \emph{H-ample on $\mls X/\mls Y$}) as a synonym for relatively $\det$-ample (resp. relatively $H$-ample).
\end{rem}

\begin{rem}
Since $\mls X$ is quasi-compact, some power of $\det(\mls E)$ will descend to a line bundle $\mls O_X(1)$ on $X$. A faithful vector bundle $\mls E$ is relatively $\det$-ample on $\mls X/S$ if and only if the line bundle $\mls O_X(1)$ is relatively ample on $X/S$.
\end{rem}



\begin{prop}\label{prop:fibers} Let $f:\mls X\rightarrow S$ be a proper tame stack over a noetherian scheme $S$ and let $\mls E$ be a vector bundle on $\mls X$.  The following are equivalent:
\begin{enumerate}
    \item [(i)] $\mls E$ is relatively det-ample (resp. $H$-ample).
    \item [(ii)] For every geometric point $\bar s\rightarrow S$ the restriction $\mls E_{\bar s}$ of $\mls E$ to $\mls X_{\bar s}:= \mls X\times _S\bar s$ is det-ample (resp. $H$-ample).
    \item [(iii)] For every point $s\in S$ the restriction $\mls E_s$ of $\mls E$ to the fiber $\mls X\times _S\Sp (\kappa (s))$ is det-ample (resp. $H$-ample).
\end{enumerate}
\end{prop}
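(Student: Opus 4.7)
The plan is to prove the three implications (i)$\Rightarrow$(iii), (iii)$\Leftrightarrow$(ii), and (iii)$\Rightarrow$(i). The heart of the argument is the openness result Proposition \ref{P:6.3}, combined with a field-extension invariance for ampleness that handles (iii)$\Leftrightarrow$(ii). Throughout I will reduce the $H$-ample case to the det-ample case using the criterion of Lemma \ref{L:1}(iii), which characterizes $H$-ampleness of $\mls E$ (up to faithfulness, which is manifestly preserved under all the base changes involved) as ampleness of the descended line bundle $\mls O_P(1)$ on the coarse space $P$ of $\mathbf{P}(\mls E)$.

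For (i)$\Rightarrow$(iii): given $s\in S$, pick an affine open $\Sp(A)\subset S$ containing $s$. This is of finite type over $S$, so by (i) the pullback $\mls E|_{\mls X_{\Sp (A)}}$ is det-ample (resp.\ $H$-ample). In the det-ample case, $\det(\mls E)^{\otimes N}$ descends to an ample invertible sheaf $\mls O_{X_{\Sp (A)}}(1)$ on the coarse space $X_{\Sp (A)}$. Since $\Sp (A)$ is affine and $X_{\Sp (A)}\to \Sp (A)$ is proper, absolute and relative ampleness agree, so the restriction of $\mls O_{X_{\Sp (A)}}(1)$ to the fiber $X_s$ is ample; this restriction is the descent of $\det(\mls E_s)^{\otimes N}$, giving det-ampleness of $\mls E_s$. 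The $H$-ample case is identical with $\mathbf{P}(\mls E)$ and $\mls O_P(1)$ in place of $X$ and $\mls O_X(1)$.

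For (iii)$\Leftrightarrow$(ii): given a geometric point $\bar s\to S$ over $s\in S$, we must compare ampleness of $\mls E_s$ on $\mls X_s$ with that of $\mls E_{\bar s}$ on $\mls X_{\bar s} = \mls X_s\otimes _{\kappa (s)}k(\bar s)$. Using Lemma \ref{L:1}(iii), the $H$-ample case reduces to the det-ample case, which in turn reduces to the invariance of ampleness for line bundles on proper algebraic spaces under arbitrary field extensions: this is classical for schemes and extends to algebraic spaces via the results of the first appendix. Hence (iii) and (ii) are equivalent, and more generally $\mls E_s$ is det-ample (resp.\ $H$-ample) on $\mls X_s$ if and only if $\mls E_s\otimes _{\kappa (s)}K$ is det-ample (resp.\ $H$-ample) on $\mls X_s\otimes _{\kappa (s)}K$ for any field extension $K/\kappa (s)$.

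For (iii)$\Rightarrow$(i): let $U\to S$ be a finite-type affine with pullback $\mls E_U$ to $\mls X_U := \mls X\times _SU$. For any $u\in U$ mapping to $s\in S$, the fiber $(\mls E_U)_u$ is $\mls E_s\otimes _{\kappa (s)}\kappa (u)$, which is det-ample (resp.\ $H$-ample) by (iii) and the field-extension invariance just established. Applying Proposition \ref{P:6.3} to $\mls X_U\to U$ (which is again proper tame over a Noetherian base), for each $u\in U$ we find an affine open neighborhood $V_u\subset U$ of $u$ such that $\mls E_U|_{\mls X_{V_u}}$ is det-ample (resp.\ $H$-ample). Since $U$ is quasi-compact, finitely many such $V_{u_i}$ cover $U$. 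Picking a single $N$ (bounded globally by the stabilizer orders on the proper tame stack $\mls X$) so that $\det(\mls E_U)^{\otimes N}$ descends to a line bundle $\mls O_{X_U}(1)$ on $X_U$, its restriction to each $X_{V_{u_i}}$ is ample on $X_{V_{u_i}}$; since $U$ is affine and $X_U\to U$ is proper, ampleness on $X_U$ coincides with relative ampleness over $U$, which is Zariski-local on $U$, so $\mls O_{X_U}(1)$ is ample on $X_U$, giving det-ampleness of $\mls E_U$. The $H$-ample case runs identically with $P_U$ and $\mls O_{P_U}(1)$.

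The main obstacle in this outline is the field-extension invariance of ampleness for line bundles on proper algebraic spaces used in Step 2 and Step 3; once this (essentially classical) input is in hand, the rest is an exercise combining Proposition \ref{P:6.3} with Zariski-locality of relative ampleness and the criterion of Lemma \ref{L:1}(iii).
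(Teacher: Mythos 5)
Your proposal is correct and follows essentially the same route as the paper: both reduce det- and $H$-ampleness to ampleness of the descended line bundles $\mls O_X(1)$ and $\mls O_P(1)$ on the coarse spaces via Lemma \ref{L:1}, use the openness statement \ref{P:6.3} (which also handles faithfulness spreading out from a fiber), and invoke invariance of ampleness under field extension for (ii)$\Leftrightarrow$(iii). The only cosmetic difference is that where the paper cites the Stacks Project's fiberwise criterion for relative ampleness, you re-derive it by covering the finite-type affine $U$ with the neighborhoods produced by \ref{P:6.3}, which forces you to state the field-extension invariance for arbitrary extensions $\kappa(u)/\kappa(s)$ rather than just for the algebraic closure.
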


\begin{proof}
First note that $\mls E$ is faithful if and only if its restriction $\mls E_{\bar s}$ to every geometric fiber is faithful, if and only if its restriction $\mls E_s$ to every fiber is faithful. This follows from \ref{L:5.3b} and the observation that $\mls E_s$ on $\mls  X_s$ (resp. $\mls E_{\bar s}$ on $\mls X_{\bar s}$) is faithful if and only if the analog of \eqref{eq:p18} over $\mls X_s$ (resp. $\mls X$) is surjective. 

We next recall that if $\mls E$ is a faithful bundle, it is det-ample if and only if $\mls O_X(1)$ is ample on $X$ and by \ref{L:1} it is $H$-ample if and only if $\mls O_P(1)$ is ample on $P$, the coarse space of the projective bundle associated to $\mls E$. Hence the equivalence of (i) and (iii) follows from \ref{P:6.3} and \cite[\href{https://stacks.math.columbia.edu/tag/0893}{Tag 0893}]{stacks-project}, and (ii) is equivalent to (iii) because a line bundle on a scheme $Y$ over a field $k$ is ample if and only if its base change to $Y_{\bar k}$ is ample, where $\bar k$ is the algebraic closure (this follows for example from descent of polarized schemes \cite[4.4.10]{Olssonbook}). 




\end{proof}

\subsection{Embeddings over  non-noetherian base schemes}
We now generalize further and fix $S$ a scheme that is quasi-compact but not necessarily noetherian. This added generality will be used in our discussion of moduli in the following sections.


 Let $\mls V$ be a finitely presented quasi-coherent sheaf on $S$ with a polynomial left $\mathbf{GL}_r$-action. 
 Recall (see \ref{R:relative-actions}) that this is equivalent to a right action of $\mathbf{GL}_r$ on $\mathbf{A}_{\mls V}$. Since $\mls V$ is finitely presented, the scheme $\mathbf{A}_{\mls V}$ is locally of finite presentation over $S$. 
 We let $\mathbf{A}_{\mls V}^{s, \;\det}$ be the stable locus as in \ref{T:GIT}. It follows from  \ref{T:GIT} as well as
\cite[\href{https://stacks.math.columbia.edu/tag/01TT}{Tag 01TT}]{stacks-project} and \cite[\href{https://stacks.math.columbia.edu/tag/06Q9}{Tag 06Q9}]{stacks-project} that the quotient $[\mathbf{A}^{s, \,\det}_{\mls V}/\mathbf{GL}_r]$ is locally of finite presentation over $S$ with finite diagonal, hence by
\cite[Proposition 3.6]{tame} contains a maximal open tame substack which we denote $\QP(\mls V)$. As in the case when $S$ is an affine scheme, $\QP(\mls V)$ has a canonical faithful rank-$r$ vector bundle $\mls E_{\QP(\mls V)}$, and in the case when $S$ is noetherian the vector bundle $\mls E_{\QP(\mls V)}$ is even relatively $\det$-ample as this can be verified Zariski locally on $S$.




\begin{prop}\label{P:non-noetherian}
Let $f:\mls X\rightarrow S$ be a proper flat tame stack over a quasi-compact scheme $S$. The following are equivalent:
\begin{enumerate}
\item [(i)] $\mls X$ admits an immersion over $S$ into a stack of the form $\QP(\mls V)$.
\item [(ii)] $\mls X$ admits a vector bundle which is det-ample in every geometric fiber.
\end{enumerate}
If moreover $S$ is noetherian, then (i)-(ii) are equivalent to 
\begin{enumerate}
\item [(iii)] $\mls X$ admits a relatively det-ample vector bundle.
\end{enumerate}
\end{prop}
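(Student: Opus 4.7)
The plan is to reduce everything to the noetherian relative case, where the construction of Section \ref{S:section4} generalizes, and then handle the non-noetherian case by noetherian approximation. In the noetherian setting, (ii) $\Leftrightarrow$ (iii) is immediate from Proposition \ref{prop:fibers}. For (i) $\Rightarrow$ (ii), I would set $\mls E := i^*\mls E_{\QP(\mls V)}$ and check det-ampleness on each geometric fiber $\mls X_{\bar s}$. This uses that the formation of $\QP(\mls V)$ commutes with base change --- each step in its construction (forming $\mathbf{A}_{\mls V}$, taking the stable locus for the determinant character, passing to the stack quotient, and taking the maximal tame open substack) commutes with base change, as recorded in Appendix B. Thus on a geometric fiber the immersion restricts to $\mls X_{\bar s} \hookrightarrow \QP(\mls V_{\bar s})$ and $\mls E_{\bar s}$ is the restriction of the canonical (det-ample) bundle there; Lemma \ref{lem:det-immersion} then gives the conclusion.

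The main work is in (ii) $\Rightarrow$ (i). First assume $S$ is noetherian, so that $\mls E$ is relatively det-ample by \ref{prop:fibers}. I would adapt the construction of Section \ref{S:section4} by replacing the finitely generated $R$-modules $V_1, V_2$ of \ref{lem:find} with finitely generated $\mathbf{GL}_r$-equivariant quasi-coherent subsheaves $\mls V_1 \subset f_*\mls O_X(1)$ and $\mls V_2 \subset f_*(\oplus_{1 \leq m \leq m_0}\pi_*S^m(\mls E^{\oplus r})\otimes \mls O_X(1))$, where $\mls O_X(1)$ is a sufficiently positive twist of the line bundle on $X$ to which some power of $\det(\mls E^{\oplus r})$ descends. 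Since $X \to S$ is projective (being proper and carrying a relatively ample line bundle), such twists exist making the pushforwards coherent on $S$ and yielding the required surjections globally over $S$; equivariance of the chosen subsheaves is arranged exactly as in the proof of \ref{lem:find}. The $\mathbf{GL}_r$-equivariant morphism construction and the stability analysis of Proposition \ref{prop:embed} then carry over without substantive change to produce an immersion $\mls X \hookrightarrow \QP(\mls V_1 \oplus \mls V_2)$ over $S$.

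For general quasi-compact $S$, I would apply noetherian approximation. Write $S$ as a cofiltered limit of noetherian schemes $\{S_\alpha\}$; by the finite presentation of $(\mls X \to S, \mls E)$, the data descends to a pair $(\mls X_{\alpha_0} \to S_{\alpha_0}, \mls E_{\alpha_0})$ for some index $\alpha_0$, with $\mls X_{\alpha_0}$ still proper, flat, and tame. By Proposition \ref{P:6.3} the locus $U_{\alpha_0} \subset S_{\alpha_0}$ over which $\mls E_{\alpha_0}$ is fiberwise det-ample is open, and by hypothesis the image of $S \to S_{\alpha_0}$ lies in $U_{\alpha_0}$. Applying the noetherian case to $\mls X_{\alpha_0}|_{U_{\alpha_0}} \to U_{\alpha_0}$ produces an immersion into $\QP(\mls V_{\alpha_0})$ for some polynomial $\mathbf{GL}_r$-representation $\mls V_{\alpha_0}$ on $U_{\alpha_0}$; since the formation of $\QP$ commutes with base change, pulling back along $S \to U_{\alpha_0}$ yields the required immersion into $\QP(\mls V)$ over $S$.

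The hard part will be executing the noetherian approximation rigorously: verifying that the descent of the triple (proper, flat, tame stack equipped with a finite-rank vector bundle) preserves all required properties and behaves as expected under further base change. These assertions all follow from the standard finite-presentation formalism, but must be assembled carefully to ensure that the resulting immersion lives in a single stack $\QP(\mls V)$ constructed from a quasi-coherent sheaf on all of $S$, rather than in a disjoint patchwork over an affine cover.
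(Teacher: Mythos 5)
Your proposal is correct and follows the same overall strategy as the paper: (i)$\Rightarrow$(ii) via base-change compatibility of $\QP(\mls V)$ together with \ref{lem:det-immersion}, (ii)$\Leftrightarrow$(iii) via \ref{prop:fibers}, and (ii)$\Rightarrow$(i) by a relative version of the construction of Section \ref{S:embed} combined with noetherian reduction. The one place where you genuinely diverge is the treatment of non-noetherian $S$. The paper does not approximate $S$ globally: it takes $\mls V_1=\bar f_*\mls O_X(1)$ and $\mls V_2=\oplus_{1\leq m\leq m_0}\bar f_*(\pi_*S^m(\mls E^{\oplus r})(1))$ to be the \emph{full} pushforwards, increases $N$ so that fiberwise higher cohomology vanishes (making these sheaves locally free and compatible with arbitrary base change), and then verifies each required property --- the existence of the bound $m_0$, local freeness, and the immersion property --- Zariski-locally on $S$, reducing each affine piece separately to a noetherian subring. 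Your variant instead writes $S$ itself as a cofiltered limit of noetherian schemes and descends the whole pair $(\mls X,\mls E)$ at once; this requires $S$ to be quasi-compact \emph{and quasi-separated} for absolute noetherian approximation, slightly more than the stated hypothesis, and it is exactly the ``single $\QP(\mls V)$ versus affine patchwork'' worry you flag at the end. The paper's local-on-$S$ reduction sidesteps this because the candidate sheaves $\mls V_1,\mls V_2$ are defined globally as pushforwards from the outset, so only the verification that the globally-defined map is an immersion is done locally. If you replace your finitely generated equivariant subsheaves by these full pushforwards (with the cohomology-vanishing normalization of $N$), your argument goes through for any quasi-compact $S$, and has the added benefit, exploited in Section \ref{S:stackofstacks}, that $(\mls V_1,\mls V_2)$ is recoverable from the immersion.
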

\begin{proof}
To see that (i) implies (ii) note that  if $\bar s \to S$ is a geometric point the fiber $\mls X_{\bar s}$ admits an immersion into the stack $\QP(\mls V_{\bar s})$ (note that formation of $\QP(\mls V)$ commutes with restriction of $\mls V$ since the stable locus commutes with basechange \ref{S:GIT without noeth}). Then $\mls E_{\QP(\mls V_{\bar s})}$ restricts to a $\det$-ample on $\mls X/S$ by \ref{lem:det-immersion}. 

To see that (ii) implies (i), 
 let $\mls E$ be a vector bundle on $\mls X$ whose restriction to every geometric fiber of $\mls X \to S$ is $\det$-ample, and let $\bar f: X \to S$ be the morphism induced by $f$. 

 \begin{lem} There exists an integer $m_0$ such that 
 $\oplus_{n \geq 0} S^n(\mls E^{\oplus r})$ is generated as an $\mls O_X$-algebra by $\oplus_{1 \leq m \leq m_0} \pi_*S^m(\mls E^{\oplus r})$.
 \end{lem}
 \begin{proof}
     Since $S$ is quasi-compact it suffices to prove the lemma after replacing $S$ by a Zariski cover, so we may assume that $S$ is affine, say $S = \Sp (R)$.  Then using the finite presentation assumption we can find a noetherian subring $R_0\subset R$ such that $(\mls X, \mls E)$ is obtained by base change from a pair $(\mls X_0, \mls E_0)$ consisting of a proper tame stack $\mls X_0/R_0$ and a vector bundle $\mls E_0$ over $\mls X_0$ which is $\det $-ample in every geometric fiber.  This reduces the proof to the noetherian case where it follows from \ref{lem:find}.
 \end{proof}

Since $S$ is quasi-compact we can find an integer $N$ such that $\det(\mls E)^{\otimes N}$ descends to a line bundle $\mls O_X(1)$ on $X.$ The assumption that $\mls E$ is det-ample in every fiber implies that $\mls O_X(1)$ is ample in every geometric fiber of $X \to S$, hence relatively ample by \cite[\href{https://stacks.math.columbia.edu/tag/0D3D}{Tag 0D3D}]{stacks-project}. By \cite[\href{https://stacks.math.columbia.edu/tag/01VU}{Tag 01VU}]{stacks-project} and quasi-compactness of $S$ we can further choose $N$ so that 
\begin{itemize}
\item [(a)] $\mls O_X(1)$ is relatively very ample for $X\to S$.  
\item [(b)] $\pi_*S^m(\mls E^{\oplus r}\otimes \mls O_X(1))$ is generated by global sections for $1 \leq m \leq m_0$. 
\item [(c)] $H^i(X_{\bar s}, \pi_*S^m(\mls E_{\bar s}^{\oplus r}\otimes \mls O_{X_{\bar s}}(1)) = 0$ for $i>0$ and $0 \leq m \leq m_0$ and all geometric points $\bar s\rightarrow S$.
\end{itemize}

Set $\mls V_1 = \bar f_* \mls O_X(1)$ and $\mls V_2 = \oplus_{1 \leq m \leq m_0} \bar f_*(\pi_* S^m(\mls E^{\oplus r})(1))$. Note that $\mls V_1$ and $\mls V_2$ are locally free sheaves whose formation commute with arbitrary base change: This can be verified locally so it suffices to consider the case when $S$ is affine where by our finite presentation assumptions $(\mls X, \mls E)$ is obtained by base change from a noetherian ring over which (a)-(c) also hold, which implies the assertion by cohomology and base change. 
 Thus if we set $\mls V = \mls V_1 \oplus \mls V_2$ we may form $\QP(\mls V)$, and we obtain a morphism $\mls X \to [\mathbf{A}_{\mls V}/\mathbf{GL}_r]$ via a relative version of the discussion in \ref{S:embed}.  To see that $\mls X \to [\mathbf{A}_{\mls V}/\mathbf{GL}_r]$ is an immersion factoring through $\QP(\mls V)$ we may work Zariski-locally on $S$ so it suffices to consider the case when $S = \Sp(R)$ is affine. Writing $R$ as a colimit of finite type $\mathbf{Z}$-algebras and using that the stacks $[\mathbf{A}_{\mls V}/\mathbf{GL}_r]$, $\QP(\mls V)$ and $\mls X$ are of finite presentation over $S$ we reduce to the case when $R$ is of finite type over $\mathbf{Z}$, where the result follows from \ref{prop:embed}.

The equivalence of (i)-(ii) with (iii) when $S$ is noetherian follows from \ref{prop:fibers}.

\end{proof}

\begin{rem}\label{R:non-noetherian}
Let $\mls X/S$ be as in \ref{P:non-noetherian} and let $\mls E$ be a vector bundle on $\mls X$. Then there is an open subscheme $S' \subset S$ with the property that a geometric point $\bar s \to S$ factors through $S'$ if and only if the fiber $\mls E_{\bar s}$ is det-ample on $\mls X_{\bar s}$. To construct it, we may assume $S$ is affine, and then since $\mls X$ and $\mls E$ are of finite presentation we can find a morphism from $S$ to a noetherian scheme $T$ and a pair $(\mls X_T, \mls E_T)$ over $T$ restricting to $(\mls X, \mls E)$ over $S$. We can construct the desired open subset $T'$ of $T$ using \ref{P:6.3} and \ref{P:non-noetherian}, and then take $S' = S \times_T T'$.
\end{rem}

\begin{example} Let $\mls Y$ be an algebraic stack and let $\mls A_\bullet = \oplus _{n\geq 0}\mls A_n$ be a quasi-coherent locally finitely generated sheaf of $\mls O_{\mls Y}$-algebras, and consider the associated morphism
$$
\mls X:= \mls Proj (\mls A_\bullet )\rightarrow \mls Y,
$$
where $\mls Proj (\mls A_\bullet ):= [\Sp (\mls A_\bullet -\{0\})/\mathbf{G}_m]$ is the stacky proj construction.  For any morphism $T\rightarrow \mls Y$ from a quasi-compact scheme  the base change $\mls X\times _{\mls Y}T$ is the stacky proj $\mls Proj _T(\mls A_T)$ of the pullback of $\mls A$ to $T$ where the canonical line bundle $\mls O_{\mls Proj_T(\mls A_T)}(1)$ is faithful and some power of it descends  to an ample invertible sheaf on the coarse space $\text{Proj}(\mls A_T)$.    In particular, the canonical line bundle $\mls O_{\mls X}(1)$ on $\mls X$ has the property that its restriction to every geometric fiber over $\mls Y$ is $\det $-ample, and if $\mls Y$ is a noetherian scheme then $\mls O_{\mls X}(1)$ is relatively det-ample over $\mls Y$.  This discussion applies, in particular, to weighted stacky blowups  \cite[\S 3]{abramovich2020functorial}. 
 Note that the stacky Proj construction yields a cyclotomic stack so the theory of \cite{AH} applies also in this case. 
\end{example}

\section{Stacks of tame stacks}\label{S:stackofstacks}

Fix an integer $r\geq 1$.
For a  scheme $T$ let $\mls S_r^{(2)}(T)$ be the $2$-category whose objects are pairs $(\mc X, \mls E)$, where $\mc X$ is a proper flat tame Artin stack over $T$ and $\mls E$ is a vector bundle of rank $r$ on $\mc X$ which is $\det $-ample in all geometric fibers (see \ref{P:non-noetherian}). 
Morphisms  $(\mc X, \mls E)\rightarrow (\mc X', \mls E')$ in $\mls S_r^{(2)}(T)$ are given by the groupoid of pairs $(f, \rho )$, where $f:\mc X\rightarrow \mc X'$ is an equivalence and $\rho :f^*\mls E'\rightarrow \mls E$ is an isomorphism.  Let $\mls S_r(T)$ denote the $1$-category associated to $\mls S^{(2)}_r(T)$, so the objects of $\mls S_r(T)$ are the same as those of $\mls S^{(2)}_r(T)$ but the morphisms are given by the isomorphisms classes of morphisms in $\mls S^{(2)}_r(T)$. When the index $r$ is clear we will omit it from the notation.


\begin{thm}\label{T:5.1} (i) The map $\mls S^{(2)}(T)\rightarrow \mls S(T)$ is an equivalence of $2$-categories.

(ii) Let $\mls S$ denote the fibered category over the category of schemes whose fiber over $T$ is given by $\mls S(T)$.  Then $\mls S$ is an algebraic stack.
\end{thm}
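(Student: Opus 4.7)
For part (i), the functor $\mls S^{(2)}_r(T)\to\mls S_r(T)$ is the identity on objects and bijective on isomorphism classes of $1$-morphisms by construction of $\mls S_r(T)$, so I would only need to verify that every $1$-morphism $(f,\rho)\colon(\mls X,\mls E)\to(\mls X',\mls E')$ has trivial $2$-automorphism group in $\mls S^{(2)}_r(T)$. A $2$-automorphism is a natural transformation $\alpha\colon f\Rightarrow f$ whose induced automorphism $\alpha^{\sharp}$ of $f^*\mls E'$ satisfies $\rho\circ\alpha^{\sharp}=\rho$, forcing $\alpha^{\sharp}=\mathrm{id}$. At each geometric point $\bar x\to\mls X$ the component $\alpha_{\bar x}$ lies in the stabilizer of $f(\bar x)$ and acts on $\mls E'(f(\bar x))$ through $\alpha^{\sharp}|_{\bar x}=\mathrm{id}$; since $\mls E'$ is faithful (det-ample implies faithful by \ref{D:ampledef}), this forces $\alpha_{\bar x}=\mathrm{id}$ for all $\bar x$, and hence $\alpha=\mathrm{id}$.

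For part (ii) my plan is the standard one: verify the fppf stack property, representability of the diagonal, and existence of a smooth surjective cover by a scheme. The stack property is immediate from fppf descent of algebraic stacks with finite diagonal together with descent of coherent sheaves on them. For the diagonal, given pairs $(\mls X,\mls E)$ and $(\mls X',\mls E')$ over $T$, the Hom stack of the underlying stacks is algebraic by standard results for tame, proper, flat source and arbitrary algebraic target; its open substack of isomorphisms is by part (i) an algebraic space, and over this algebraic space the additional datum of an isomorphism between the pullbacks of the vector bundles is parameterized by the Isom of two coherent sheaves on a proper flat tame stack, which is again representable by an algebraic space of finite presentation.

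The core of the argument will be the construction of the smooth cover. For each finitely generated polynomial $\mathbf{GL}_r$-representation $V$ over $\mathbf{Z}$, I would let $\mls H_V$ be the stack whose $T$-points are closed immersions $\mls X\hookrightarrow\QP(V)_T$ with $\mls X\to T$ proper, flat, tame, of finite presentation, and such that the restriction of the canonical bundle $\mls E_{\QP(V)}$ is of relative rank $r$ and det-ample in every geometric fiber. Openness of this last condition is \ref{R:non-noetherian}, and each $\mls H_V$ is an open substack of an appropriate Hilbert-type stack of $\QP(V)$ over $\mathbf{Z}$, hence algebraic. Restriction of $\mls E_{\QP(V)}$ defines a forgetful morphism $\mls H_V\to\mls S_r$, and by \ref{P:non-noetherian} applied fiberwise the disjoint union $\coprod_V \mls H_V\to\mls S_r$ is surjective on geometric points.

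The main obstacle will be proving that this cover is smooth. I would reduce it to the following deformation-lifting statement: given a square-zero thickening $T\hookrightarrow T'$, a lift $(\mls X',\mls E')/T'$ of $(\mls X,\mls E)/T$, and a closed immersion $\mls X\hookrightarrow\QP(V)_T$ inducing $\mls E$, there exist a polynomial representation $V'\supset V$ and a closed immersion $\mls X'\hookrightarrow\QP(V')_{T'}$ extending the original one. The construction in \ref{prop:embed} of the immersion from the data $(m_0,N,V_1,V_2)$ of \ref{lem:find} is fully functorial in those choices, and (as exploited in the proof of \ref{P:non-noetherian}) for $m_0$ and $N$ sufficiently large the pushforwards $\bar f_*(\pi_*S^m(\mls E^{\oplus r})(n))$ are locally free with formation compatible with base change. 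Lifting $V_1$ and $V_2$ to $T'$ and possibly enlarging them to absorb any new sections appearing on $\mls X'$ then produces the required immersion, whence smoothness of the cover. Combined with representability of the diagonal and the stack property this yields algebraicity of $\mls S_r$.
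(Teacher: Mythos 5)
Your argument for (i) is correct and more direct than the paper's: you kill $2$-automorphisms using faithfulness of the det-ample bundle, whereas the paper obtains (i) only as a byproduct of identifying $\mls S^{(2)}_{N,m_0}(T)$ with a $1$-groupoid of Hilbert-stack data. One point of care: to pass from ``$\alpha_{\bar x}=\mathrm{id}$ at every geometric point'' to ``$\alpha=\mathrm{id}$'' you should phrase the argument scheme-theoretically, namely that $\alpha$ is a section of the kernel of $I_{\mls X'}\times_{\mls X'}\mls X\to \mathbf{GL}(f^*\mls E')$ and that faithfulness makes this map a monomorphism of group schemes, so the kernel is trivial; pointwise triviality of stabilizer elements alone does not immediately rule out infinitesimal automorphisms.

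For (ii) your route diverges from the paper's and has a genuine gap at the smoothness of the cover. The paper never proves that a Hilbert-type stack covers $\mls S_r$ \emph{smoothly}; it stratifies $\mls S_r$ by open substacks $\mls S^{(2)}_{N,m_0}$ and, using the recovery of the embedding data in \ref{S:recover}, identifies each with a disjoint union of open substacks $\mls H'_{a,b}$ of Hilbert stacks on which the canonical maps \eqref{eq:canon1} and \eqref{eq:canon2} are isomorphisms --- an equivalence of stacks, so no smoothness argument is needed. Your deformation-lifting statement does not establish formal smoothness of $\coprod_V\mls H_V\to\mls S_r$: for a square-zero thickening $T\hookrightarrow T'$, a lift of a $T$-point of $\mls H_V$ must again be a $T'$-point of $\mls H_V$ for the \emph{same} $V$ (the component is determined by restriction to $T$), so you may not ``enlarge $V'\supset V$ to absorb new sections.'' And for fixed $V$ the lifting can genuinely fail: by \ref{P:2.3} a map $\mls X'\to[\mathbf{A}_V/\mathbf{GL}_r]$ inducing $\mls E'$ amounts to an equivariant map $V\to H^0(X',\oplus_m\pi_*S^m(\mls E'^{\oplus r}))$, and the obstruction to lifting the given sections across the thickening lives in an $H^1$ that need not vanish without the base-change and vanishing conditions (a)--(c) from the proof of \ref{P:non-noetherian}; your $\mls H_V$ imposes no such conditions. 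The repair is exactly the paper's move: cut down to the open locus where those conditions hold and where the section data is recovered canonically, at which point the cover becomes an isomorphism onto an open substack rather than merely a smooth surjection. (Your descent and diagonal arguments are plausible, though the paper gets both for free from the same identification.)
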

The proof will be in several steps. Let $\bar f: X \to T$ be the structure morphism.  
\begin{pg}\label{P:6.2b}
First note that $\mls S^{(2)}$ is a $2$-stack in the sense of 
\cite[1.10]{Breen}, by
\cite[Example 1.11 (i)]{Breen}.  We will not use this fact other than to note that it implies the following two facts (which can also be proved directly, essentially reworking the argument of loc. cit.):
\begin{enumerate}
\item [(i)] Given two objects $(\mls X, \mls E), (\mls X', \mls E')\in \mls S^{(2)}(T)$ over a scheme $T$ the fibered category associating to $T'\rightarrow T$ the groupoid of equivalences $(\mls X_T, \mls E_T)\rightarrow (\mls X'_{T'}, \mls E_{T'})$ is a stack for the \'etale topology.  Note that statement (i) in \ref{T:5.1} is equivalent to the statement that this stack is equivalent to a sheaf, so the verification of (i) is local on $T$.
\item [(ii)] If we assume statement (i) in \ref{T:5.1} then it follows that $\mls S$ is a stack for the \'etale topology.
\end{enumerate}
The main consequence we need from this is that it suffices to prove \ref{T:5.1} (i) only for quasi-compact $T$, where we can describe everything more explicitly, and furthermore since $\mls S$ is a stack to prove \ref{T:5.1} (ii) it suffices to show that $\mls S$ admits open substacks $\mls U_i\subset \mls S$ which are algebraic and such that $\coprod _i\mls U_i\rightarrow \mls S$ is surjective. 
\end{pg}
\begin{pg}\label{P:proofstart}
Next we make some general observations about the discussion in \ref{S:recover}. Let $\mls V$ be a $\mathbf{GL}_r$-representation over $T$ with a decomposition as a direct sum of representations $\mls V = \mls V_1 \oplus \mls V_2$, where $\mathbf{GL}_r$ acts via the character $\det^N$ on $\mls V_1$. First, as in \ref{S:recover} there is a rational map $\QP(V) \dashrightarrow \mathbf{P}(\mls V_1)$ identifying $(\det \mls E_{\QP(\mls V)})^{\otimes N}$ with $\mls O_{\mathbf{P}(\mls V_1)}(1)$. In such a situation we define $\QP(\mls V)^{\circ}$ to be the domain of this rational map. 

Second, if $\mls X \subset \QP(\mls V)^\circ$ is a closed substack, then $\mls E_{\QP(\mls V)}$ restricts to a relatively det-ample vector bundle $\mls E$ on $\mls X$ and we also have a commuting diagram as in \eqref{eq:p17.1}, in particular a morphism $i': X \to \mathbf{P}(\mls V_1)$. If we define $\mls O_X(1):= i'^* \mls O_{\mathbf{P}(\mls{V}_1)}(1)$, then $(\det \mls E)^{\otimes N}$ is identified with $\mls O_X(1)$ and there is a canonical morphism
\begin{equation}\label{eq:canon1}
\mls V_1 \to \bar f_*\mls O_X(1).
\end{equation}
We also get a canonical morphism 
\begin{equation}\label{eq:canon2}
\mls V_2 \to \bar f_*(\pi_*S^m(\mls E^{\oplus r}) (1)).
\end{equation}
as in \eqref{eq:p17.2}.

\end{pg}

\begin{pg}
Given integers $m_0, N >0$, define $\mls S^{(2)}_{N, m_0}(T)$ to be the full 2-subcategory of $\mls S^{(2)}(T)$ with objects $(\mls X, \mls E)$ such that $(\det \mls E)^{\otimes N}$ on $\mls X$ descends to a line bundle $\mls O_X(1)$ on $X$ and conditions (a)-(c) in the proof of \ref{P:non-noetherian} hold.  In particular, from \ref{P:non-noetherian} we see that an object $(\mls X, \mls E)$ of $\mls S^{(2)}_{N, m_0}(T)$ has a canonical immersion (note that to verify that the natural map is an immersion can be verified locally on $T$)
\[
\mls X \hookrightarrow \QP(\mls V) \quad \quad \quad \quad \mls V = \bar f_*\mls O_X(1) \oplus \big ( \oplus_{1 \leq m \leq m_0} \bar f_*(\pi_*S^m(\mls E^{\oplus r}) (1)) \big).
\]
If we set $\mls V_1 = \bar f_*\mls O_X(1)$ then this immersion factors through $\QP(\mls V)$.
Observe that the action of $\mathbf{GL}_r$ on $\mls V$ is polynomial of degree $\leq m_0$.

If $\mls V$ is any $\mathbf{GL}_r$-representation over $S$ with a subrepresentation $\mls V_1 \subset \mls V$ where $\mathbf{GL}_r$ acts via the character $\det^N$, then as in \ref{S:recover} there is a rational map $\QP(V) \dashrightarrow \mathbf{P}(\mls V_1)$ identifying $(\det \mls E)^{\otimes N}$ with $\mls O_{\mathbf{P}(\mls V_1)}(1)$. In such a situation we define $\QP(\mls V)^{\circ}$ to be the domain of this rational map. Using this notation, if we set $\mls V_1 =\bar f_*\mls O_X(1) $ in the above situation then the canonical immersion factors via $\mls X \hookrightarrow \QP(\mls V)^{\circ}$.
\end{pg}

\begin{pg} 
The next lemma and \cite[6.1]{Gtorsors} imply that the condition that $\det(\mls E)^{\otimes N}$ descends to $X$ is an open condition on $\mls X$.
\begin{lem}\label{L:6.6}
Let $\mls X$ be a tame stack with coarse space $X$ and let $\mls L$ be a line bundle on $\mls X$. Then there is an open substack $\mls U \subset \mls X$ such that a geometric point $\bar x \to \mls X$ factors through $\mls U$ if and only if the stabilizer action on $\mls L_{\bar x}$ is trivial. 
\end{lem}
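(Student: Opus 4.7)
The plan is to translate triviality of the stabilizer action into the vanishing of a section of a coherent sheaf, and then invoke Nakayama to obtain openness. First, since $\mls X$ has finite diagonal, the inertia stack $\pi \colon I_{\mls X}\to \mls X$ is finite, so we may write $I_{\mls X} = \Spec_{\mls X}(\mathcal{A})$, where $\mathcal{A} := \pi_*\mls O_{I_{\mls X}}$ is a coherent sheaf of $\mls O_{\mls X}$-algebras. The tautological action of $I_{\mls X}$ on $\mls L$ is $\mls O_{\mls X}$-linear, so it factors through the automorphism group scheme of $\mls L$; since $\mls L$ has rank one this group scheme is canonically $\mathbf{G}_{m,\mls X}$, yielding a character $\rho \colon I_{\mls X}\to \mathbf{G}_{m,\mls X}$. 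Dually, $\rho$ corresponds to an $\mls O_{\mls X}$-algebra homomorphism $\mls O_{\mls X}[t^{\pm 1}]\to \mathcal{A}$, and we let $a \in \mathcal{A}^{\times}$ denote the image of $t$.

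For a geometric point $\bar x\to \mls X$, the stabilizer action on $\mls L_{\bar x}$ is trivial if and only if $\rho_{\bar x}\colon I_{\mls X,\bar x}\to \mathbf{G}_{m,\kappa(\bar x)}$ is the constant character equal to $1$, equivalently, if and only if $a$ restricts to $1$ in the fiber $\mathcal{A}_{\bar x} := \mathcal{A}\otimes_{\mls O_{\mls X}} \kappa(\bar x)$. Setting $b := a-1 \in \mathcal{A}$, this amounts to the single condition that $b$ maps to $0$ in $\mathcal{A}_{\bar x}$.

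Now view $b$ as a morphism of coherent $\mls O_{\mls X}$-modules $b \colon \mls O_{\mls X}\to \mathcal{A}$, and let $\mathcal{I}\subset \mathcal{A}$ denote its image, which is a coherent subsheaf. By Nakayama's lemma, the vanishing of $b$ in $\mathcal{A}\otimes \kappa(\bar x)$ is equivalent to $\mathcal{I}_{\bar x}=0$, i.e.\ to $\bar x \notin \mathrm{Supp}(\mathcal{I})$. Since the support of a coherent sheaf is closed, its complement $\mls U \subset \mls X$ is the desired open substack.

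The argument is essentially bookkeeping once the setup is in place; the only delicate point is to produce the character $\rho$, and hence the global section $a \in \mathcal{A}$, coordinate-free on the stack. This uses that the automorphism sheaf of a line bundle is canonically $\mathbf{G}_m$, so that the action of $I_{\mls X}$ on $\mls L$ globalizes to a morphism of group schemes over $\mls X$ without any choice. An alternative would be to work \'etale-locally on $\mls X$ where $\mls L$ trivializes and then glue, but the global inertia formulation is both cleaner and well-suited to the intended application.
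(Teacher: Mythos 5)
Your first two paragraphs are fine: since $I_{\mls X}\to\mls X$ is finite, hence affine, the formation of $\mathcal{A}=\pi_*\mls O_{I_{\mls X}}$ commutes with base change to a geometric point, so $\mathcal{A}\otimes\kappa(\bar x)$ is indeed the coordinate ring of $G_{\bar x}$, and the stabilizer action on $\mls L_{\bar x}$ is trivial if and only if the group-like element $a$ equals $1$ in $\mathcal{A}\otimes\kappa(\bar x)$. The gap is in the Nakayama step. Nakayama gives only one implication: if $\mathcal{I}\otimes\kappa(\bar x)=0$ then $\mathcal{I}$, hence $b$, vanishes identically in a neighborhood of $\bar x$, so in particular $b\otimes\kappa(\bar x)=0$. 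The converse, which is what you need, fails for general coherent sheaves, because the map $\mathcal{I}\otimes\kappa(\bar x)\to\mathcal{A}\otimes\kappa(\bar x)$ obtained by tensoring the inclusion $\mathcal{I}\subset\mathcal{A}$ with $\kappa(\bar x)$ is only right exact: $b$ can die in the fiber while $\mathcal{I}\otimes\kappa(\bar x)\neq 0$. Concretely, for $\mathcal{A}=\mls O_{\mathbf{A}^1}$ and $b=s$ the coordinate, $b$ vanishes in the fiber at the origin, yet $\mathcal{I}=(s)$ has full support; moreover the fiberwise vanishing locus $\{0\}$ of this $b$ is closed, not open. So ``the locus where a section of a coherent sheaf vanishes fiberwise'' is not open in general, and your identification of that locus with the complement of $\mathrm{Supp}(\mathcal{I})$ is precisely the assertion that still needs proof.

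What is missing is a rigidity statement that genuinely uses tameness: if $\rho_{\bar x}$ is trivial then $a=1$ identically (not merely fiberwise) near $\bar x$. Without linear reductivity this is false. For instance, for the constant group $\mathbf{Z}/p$ over $k[s]/(s^p)$ in characteristic $p$, the unit $a=1+s$ satisfies $a^p=1$ and so defines a character that is trivial at the unique geometric point but has $b=a-1=s\neq 0$; your recipe would return the empty set where the correct open substack is everything. For tame stacks the rigidity does hold (characters of finite linearly reductive group schemes over a connected, or henselian local, base are detected on the closed fiber), and that is exactly the content the paper imports: it works locally on the coarse space, descends and trivializes $\mls L^{\otimes M}$ so that $\mls L$ is controlled by a $\bmu_M$-torsor, and then cites \cite[6.6]{Gtorsors} for openness of the triviality locus of such a torsor. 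Either that citation or a direct rigidity argument for characters of linearly reductive stabilizers has to be inserted at your final step; Nakayama alone cannot supply it.
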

\begin{proof}
We may work locally on $X$. Shrink $X$ first so that $\mls L^{\otimes M}$ is descends to the coarse space for some integer $M$, and then shrink $X$ again so that $\mls L^{\otimes M}$ is trivial. Fixing a trivialization we get a reduction of the $\mathbf{G}_m$-torsor corresponding to $\mls L$ to a $\bmu_M$-torsor $\mls P$. It now suffices to prove the analog of the lemma for $\mls P$, but this is \cite[6.6]{Gtorsors}.
\end{proof}
It follows that the condition that an object $(\mls X, \mls E)\subset \mls S^{(2)}(T)$ lies in $\mls S^{(2)}_{N, m_0}(T)$ is an open condition on $T$ and for quasi-compact $T$ we have 
$$
\text{colim}_{N, m_0}\mls S^{(2)}_{N, m_0}(T) = \mls S^{(2)}(T).
$$
Therefore as argued in \ref{P:6.2b} if we prove that $\mls S^{(2)}_{N, m_0}(T)$ is equivalent to a $1$-category and that the resulting stack $\mls S_{N, m_0}$ is algebraic then we obtain \ref{T:5.1}.  We prove these statements by describing $\mls S^{(2)}_{N, m_0}(T)$ more explicitly.
\end{pg}


\begin{pg}
To do so, fix $N$ and $m_0$ and introduce some auxiliary stacks:  
\begin{enumerate}
    \item  For an integer $a>0$ let $\Sigma _a$ denote the stack whose fiber over a scheme $T$ is the groupoid of vector bundles on $T$ of rank $a$ with polynomial representation of $\mathbf{GL}_r$ of degree $\leq m_0$.
    \item  For integers $a, b>0$ let $\mls H_{a, b}$ denote the stack whose fiber over a scheme $T$ is the groupoid of triples $(\mls V_1, \mls V_2, i)$, where $\mls V_1$ is
    a vector bundle  of rank $b$ (which we view as having action of $\mathbf{GL}_r$ by $\det ^N$),
     $\mls V_2$ is a vector bundle of rank $a$ on $T$ with polynomial action of $\mathbf{GL}_r$ of degree $\leq m_0$, and $i:\mls X\hookrightarrow \QP (\mls V_1\oplus \mls V_2)^\circ $ is a closed substack, proper, locally finitely presented,  and flat over $T$. Note that there is a morphism $\mls H_{a, b} \to \mls S^{(2)}$ sending $(\mls V_1, \mls V_2, i)$ to $(\mls X, i^*\mls E_{\QP(\mls V)}).$
    \item  $\mls H_{a, b}'\subset \mls H_{a, b}$ is defined to be the substack of triples $(\mls V_1, \mls V_2, i)$ for which the induced pair $(\mls X, i^*\mls E_{\QP(\mls V)})$ is an object of $\mls S^{(2)}_{N, m_0}$ and the natural maps \eqref{eq:canon1} and \eqref{eq:canon2} are isomorphisms of $\mathbf{GL}_r$-representations.
\end{enumerate}
To prove \ref{T:5.1} it then suffices to prove the following:

\begin{lem} (i) The stacks $\Sigma _a$, $\mls H_{a, b}$, and $\mls H_{a, b}'$ are algebraic stacks locally of finite type over $\mathbf{Z}$ and the inclusion $\mls H_{a, b}'\subset \mls H_{a, b}$ is an open immersion.

(ii) For every scheme $T$ the natural map $\mls S^{(2)}_{N, m_0}(T)\rightarrow \sqcup_{a, b \in \mathbf{Z}_{\geq 0} }\mls H_{a, b}'(T)$ is an equivalence. 
\end{lem}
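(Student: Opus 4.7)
First I would realize $\Sigma_a$ as a closed substack of a vector bundle over $BGL_a$. A polynomial representation of $\mathbf{GL}_r$ of degree $\leq m_0$ on a rank-$a$ bundle $\mls V$ on $T$ is equivalent to an $\mls O_T$-linear coaction map $\mls V \to \mls V \otimes_{\mathbf{Z}} \mathbf{Z}[x_{ij}]^{\leq m_0}$ satisfying the coassociativity and counit axioms. Since $\mathbf{Z}[x_{ij}]^{\leq m_0}$ is a finitely generated free $\mathbf{Z}$-module, the space of such $\mls O_T$-linear maps is the total space of a vector bundle of finite rank over $BGL_a$, and the axioms cut out a closed substack. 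Thus $\Sigma_a$ is algebraic and locally of finite type over $\mathbf{Z}$.

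\textbf{Algebraicity of $\mls H_{a,b}$ and openness of $\mls H_{a,b}'$.} Next set $\mls W := BGL_b \times \Sigma_a$, which is algebraic and locally of finite type over $\mathbf{Z}$. The universal data over $\mls W$ (a rank-$b$ bundle $\mls V_1^{\mathrm{univ}}$ with $\mathbf{GL}_r$ acting via $\det^N$, and a universal polynomial rank-$a$ bundle $\mls V_2^{\mathrm{univ}}$) determines an algebraic stack $\mls Q := \QP(\mls V_1^{\mathrm{univ}} \oplus \mls V_2^{\mathrm{univ}})^\circ$ with finite diagonal, locally of finite presentation over $\mls W$. Then $\mls H_{a,b}$ is the open substack of the relative Hilbert stack of $\mls Q/\mls W$ consisting of closed substacks that are proper and flat over the base. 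The hardest input is algebraicity of this Hilbert stack: since $\mls Q$ is tame with finite diagonal but not proper over $\mls W$, one must use the version of the Hilbert stack valid for targets that are only locally of finite presentation with finite diagonal (following Olsson, Aoki, or Hall--Rydh), restricted to the proper flat locus. For $\mls H_{a,b}' \subset \mls H_{a,b}$ each defining condition is open: descent of $(\det i^*\mls E_{\QP(\mls V)})^{\otimes N}$ to the coarse space by \ref{L:6.6}; relative very ampleness of the resulting $\mls O_X(1)$, global generation of $\pi_*S^m(\mls E^{\oplus r})(1)$ for $1 \leq m \leq m_0$, and vanishing of higher cohomology in geometric fibers by relative ampleness and cohomology-and-base-change; and the condition that \eqref{eq:canon1} and \eqref{eq:canon2} be isomorphisms is open since source and target are coherent sheaves that are locally free of the fixed ranks $b$ and $a$ on the open locus where the previous conditions hold.

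\textbf{Part (ii).} It remains to check that the functor $\mls S^{(2)}_{N,m_0}(T) \to \sqcup_{a,b} \mls H_{a,b}'(T)$ is an equivalence. In one direction, given $(\mls X, \mls E) \in \mls S^{(2)}_{N,m_0}(T)$ the construction in the proof of \ref{P:non-noetherian} produces $\mls V_1 := \bar f_*\mls O_X(1)$, $\mls V_2 := \oplus_{1 \leq m \leq m_0} \bar f_*(\pi_*S^m(\mls E^{\oplus r})(1))$ and a closed immersion $i:\mls X \hookrightarrow \QP(\mls V_1 \oplus \mls V_2)^\circ$, producing an object of $\mls H_{a,b}'(T)$ with $a = \mathrm{rk}(\mls V_2)$ and $b = \mathrm{rk}(\mls V_1)$. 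The reverse direction sends $(\mls V_1, \mls V_2, i)$ to $(\mls X, i^*\mls E_{\QP(\mls V)})$. That these are mutually inverse (on objects and on morphisms) is precisely the content of \ref{S:recover}: the requirement in the definition of $\mls H_{a,b}'$ that the canonical maps \eqref{eq:canon1} and \eqref{eq:canon2} be isomorphisms encodes exactly the recovery of $\mls V_1$ and $\mls V_2$ from $(\mls X, \mls E)$. In particular, since $\mls H_{a,b}'(T)$ is a 1-category this also shows $\mls S^{(2)}_{N,m_0}(T)$ is equivalent to a 1-category, completing the reduction to \ref{T:5.1}.
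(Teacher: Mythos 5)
Your proposal is correct and follows essentially the same route as the paper: $\Sigma_a$ as closed conditions on a bundle of coaction maps over $B\mathbf{GL}_a$, $\mls H_{a,b}$ as an open substack of a relative Hilbert stack of $\QP(\mls V_1\oplus\mls V_2)^\circ$ over $B\mathbf{GL}_b\times\Sigma_a$ (the paper cites \cite[1.1]{OS03}, generalized to tame stacks, for its algebraicity), openness of $\mls H_{a,b}'$ from openness of the defining conditions, and part (ii) from the recovery procedure of \ref{S:recover}. Your spelled-out list of why each condition defining $\mls H_{a,b}'$ is open is more detailed than the paper's one-line assertion, but the content is the same.
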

\begin{proof}
    To see that $\Sigma _a$ is algebraic, note that the vector bundles of rank $a$ are classified by the algebraic stack $B\mathbf{GL}_a$, and the data of a polynomial $\mathbf{GL}_r$-action of degree $\leq m_0$ on a vector bundle $\mls V$ on a scheme $T$ is classified by a map $\mls V\rightarrow \mls V\otimes _{\mls O_T}\mls O_T[x_{ij}]^{\leq m_0}_{1\leq i,j\leq r}$ satisfying various closed conditions.  From this the statement about $\Sigma _a$ follows.

    Consider the stack $ B\mathbf{GL}_b \times \Sigma _a$ classifying pairs $(\mls V_1, \mls V_2)$ consisting of a rank $b$ vector bundle $\mls V_2$ and a rank $a$ polynomial representation of $\mathbf{GL}_r$ of degree $\leq m_0$, and let $\QP \rightarrow  B\mathbf{GL}_b\times \Sigma _a$ denote the algebraic stack obtained by the construction $(\mls V_1, \mls V_2)\mapsto \QP (\mls V_1\oplus \mls V_2)$ applied to the universal pair.  We can then consider the relative Hilbert functor $\widetilde {\mls H}$ which to any $T\rightarrow  B\mathbf{GL}_b \times \Sigma _a$ corresponding to a pair $(\mls V_1, \mls V_2)$ associates the set of proper flat finitely presented  closed substacks of $\QP (\mls V_1\oplus \mls V_2)^{\circ}$.  By \cite[1.1]{OS03} (which generalizes by the same argument to tame stacks) the stack $\widetilde {\mls H}$ is algebraic.  Since the condition that a stack is tame is an open condition on the base it follows that $\mls H_{a, b}\subset \widetilde {\mls H}$ is also an algebraic stack.  

    The inclusion $\mls H'_{a, b}\subset \mls H_{a, b}$ is an open substack because the condition that a pair $(\mls X, \mls E)\in \mls S^{(2)}$ defines an object of $\mls S^{(2)}_{N, m_0}$ is open, and because the conditions that the canonical morphisms \eqref{eq:canon1} and \eqref{eq:canon2} be isomorphisms is also open.    This concludes the proof of (i).

    Statement (ii) follows from \ref{S:recover} and \ref{P:proofstart}.
\end{proof}

This completes the proof of \ref{T:5.1}. \qed 
\end{pg}

\begin{rem} Let $\mls S_N \subset \mls S$ denote the open substack of pairs $(\mls X, \mls E)$ for which $\det(\mls E)^{\otimes N}$ descends to a line bundle on $X$, and let $\mls Pol$ denote the stack of polarized schemes considered in \cite[\href{https://stacks.math.columbia.edu/tag/0D4X}{Tag 0D4X}]{stacks-project}. There is a natural morphism of algebraic stacks
$$
\mls S_N\rightarrow \mls Pol, \ \ (\mls X, \mls E)\mapsto (X, \pi _*\text{det}(\mls E)^{\otimes N}).
$$
\end{rem}

\section{The stack of tame orbicurves}\label{S:section8}

In this section we apply the preceding theory to prove that the stack parameterizing tame orbicurves is algebraic.

\subsection{Tame Artin curves are projective}

\begin{defn}
    Let $k$ be a field. A \textit{tame Artin curve} over $k$ is an algebraic stack $\mls C$ over $k$ such that
    \begin{enumerate}
        \item $\mls C$ is a finite type  tame Artin stack over $k$, and
        \item $\mls C$ is proper and geometrically connected over $k$ and has dimension 1.
    \end{enumerate}
    The coarse space of a tame Artin curve is a curve in the sense of \cite[\href{https://stacks.math.columbia.edu/tag/0D4Z}{Tag 0D4Z}]{stacks-project}.
    We say that a tame Artin curve is a \emph{tame orbicurve} if it contains a dense open substack which is a scheme. 
\end{defn}

Note that we allow a tame Artin curve to be non-integral, and even non-reduced. We record the following vanishing result.

\begin{prop}\label{prop:vanishing on a curve}
    If $\mls C$ is a tame Artin curve over a field $k$ and $\mls E$ is a coherent sheaf on $\mls C$, then ${H}^q(\mls C,\mls E)=0$ for all $q\geq 2$.
\end{prop}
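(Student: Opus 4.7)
The plan is to reduce the vanishing on $\mls C$ to Grothendieck vanishing on its coarse space. Let $\pi:\mls C \to C$ be the coarse moduli morphism. Because $\mls C$ is tame (and has finite diagonal), the functor $\pi_*$ is exact on the category of quasi-coherent sheaves by \cite[3.3]{tame}, so $R^q\pi_*\mls E = 0$ for all $q \geq 1$ and any coherent $\mls E$. Moreover $\pi_*\mls E$ is coherent on $C$ since $\pi$ is proper with finite fibers.

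Next I would apply the Leray spectral sequence
\[
E_2^{p,q} = H^p(C, R^q\pi_*\mls E) \Rightarrow H^{p+q}(\mls C, \mls E),
\]
which by the exactness above degenerates to give $H^q(\mls C, \mls E) \simeq H^q(C, \pi_*\mls E)$ for all $q$.

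Finally, $C$ is a proper algebraic space over $k$ of dimension $1$ (since $\mls C$ has dimension $1$ and $\pi$ is a coarse moduli morphism with finite fibers). By Grothendieck vanishing for algebraic spaces (see for example \cite[\href{https://stacks.math.columbia.edu/tag/0A3G}{Tag 0A3G}]{stacks-project} together with \cite[\href{https://stacks.math.columbia.edu/tag/0A4H}{Tag 0A4H}]{stacks-project}), the cohomology of any quasi-coherent sheaf on $C$ vanishes in degrees strictly greater than $\dim C = 1$. Combining this with the previous step yields $H^q(\mls C, \mls E) = 0$ for $q \geq 2$.

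No real obstacle is expected: the only thing to check carefully is that the vanishing statement from \cite{tame} applies in our setting (tame with finite diagonal, over a field) and that Grothendieck vanishing is available in the form we need for $1$-dimensional proper algebraic spaces; both are standard. If one wished to avoid citing the algebraic-space version of Grothendieck vanishing, one could pass to an étale cover of $C$ by a scheme (or use that $C$ is in fact a scheme when $\mls C$ admits a faithful vector bundle, by \ref{cor:4.6} together with results in the first appendix), and then apply the classical theorem.
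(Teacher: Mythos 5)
Your proposal is correct and is essentially the paper's own argument: both use exactness of $\pi_*$ for the tame coarse moduli map (hence degeneration of the Leray spectral sequence) to identify $H^q(\mls C,\mls E)$ with $H^q(C,\pi_*\mls E)$, and then invoke vanishing of coherent cohomology above the dimension of the one-dimensional coarse space. Your extra care about the algebraic-space version of Grothendieck vanishing is a reasonable refinement but does not change the argument.
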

\begin{proof}
    Let $\pi:\mls C\to C$ be the coarse moduli space of $\mls C$. Since $C$ has dimension $1$, the cohomology of any coherent sheaf on $C$ vanishes in degrees $\geq 2$. As $\mls C$ is tame, the pushforward $\pi_*$ is exact, and so we have ${H}^q(\mls C,\mls E)={H}^q(C,\pi_*\mls E)=0$ for any $q\geq 2$.
\end{proof}

The following result generalizes the fact that a proper curve over $k$ admits an ample line bundle.
\begin{prop}\label{thm:existence of an ample vector bundle}
    If $\mls C$ is a tame Artin curve over a field $k$, then there exists a det-ample vector bundle on $\mls C$.
\end{prop}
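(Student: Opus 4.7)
The strategy is to apply Corollary \ref{E:3.6}, which produces a $\det$-ample (and even $H$-ample) vector bundle on a tame stack from the product of a faithful vector bundle on the stack with a sufficiently high power of an ample line bundle pulled back from the coarse space. Thus the proof reduces to exhibiting two ingredients: (a) a faithful vector bundle $\mls E_0$ on $\mls C$, and (b) an ample line bundle $\mls L_0$ on the coarse space $C$.

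Part (b) is easy: by assumption $C$ is a proper geometrically connected curve over $k$, hence projective (see \cite[\href{https://stacks.math.columbia.edu/tag/0A26}{Tag 0A26}]{stacks-project}), so $\mls L_0$ exists. The main step is (a). Since $\mls C$ is a tame Artin stack with projective (in particular quasi-projective) coarse space, the plan is to exhibit a global quotient presentation $\mls C \simeq [Z / \mathbf{GL}_r]$, which directly yields a faithful rank-$r$ vector bundle as the $\mathbf{GL}_r$-torsor $Z \to \mls C$. In the Deligne--Mumford case this presentation is provided by \cite[5.5]{OS03}; in the general tame Artin setting the same conclusion can be obtained by extending that argument, using the \'etale-local structure of tame stacks from \cite{tame} to construct local finite flat covers by schemes and gluing. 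Here the vanishing statement in Proposition \ref{prop:vanishing on a curve} is useful in controlling obstructions: since $H^q(\mls C, -)$ kills coherent sheaves in degrees $\geq 2$, the relevant gluing problems are essentially one-dimensional.

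With $\mls E_0$ and $\mls L_0$ in hand, Corollary \ref{E:3.6} immediately yields that $\mls E_0 \otimes \pi^*\mls L_0^{\otimes m}$ is both $H$-ample and $\det$-ample on $\mls C$ for $m$ sufficiently large, completing the proof. The main obstacle is step (a): ensuring the existence of a faithful vector bundle works uniformly over tame Artin curves, since in positive characteristic $\mls C$ may have infinitesimal stabilizer factors such as $\bmu_p$ and is not necessarily Deligne--Mumford. Once this is established the conclusion is a one-line appeal to the ampleness theory developed in Section \ref{S:ample}.
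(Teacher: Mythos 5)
Your overall strategy coincides with the paper's: produce a faithful vector bundle $\mls E_0$ on $\mls C$ and an ample line bundle on the coarse space $C$, then invoke \ref{E:3.6}. Step (b) and the final appeal to \ref{E:3.6} are fine, modulo the small point that $C$ is a priori only an algebraic space, so one should also cite the fact that a proper one-dimensional algebraic space over a field is a (projective) scheme.

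The gap is in step (a), and it is the entire content of the proposition. Exhibiting a presentation $\mls C\simeq [Z/\mathbf{GL}_r]$ is \emph{equivalent} to exhibiting a faithful vector bundle (this is essentially \ref{cor:4.6}), so passing to a global quotient presentation merely restates the problem rather than solving it. The suggestion that one can ``extend the argument of \cite[5.5]{OS03} using the \'etale-local structure of tame stacks to construct local finite flat covers and glue'' is not a proof: \'etale-locally on the coarse space a tame stack is indeed a quotient $[U/G]$ with $G$ finite linearly reductive, which yields \emph{local} faithful bundles, but assembling these into a single global vector bundle is precisely the resolution property, which is a genuinely delicate matter (and note that \cite[5.5]{OS03} goes in the opposite direction, from a quotient presentation to a generating sheaf). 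The appeal to \ref{prop:vanishing on a curve} to ``control obstructions'' does not substitute for an argument; that proposition kills $H^{\geq 2}$ of \emph{coherent} sheaves, whereas the obstructions to gluing local quotient presentations live in nonabelian or Brauer-type cohomology. The paper closes exactly this gap by citing \cite[Corollary 60]{mathur2021resolution}, which supplies a faithful vector bundle on any one-dimensional tame stack. Without that input, or a genuine replacement for it, your proof is incomplete.
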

\begin{proof}
By \cite[Corollary 60]{mathur2021resolution} the stack $\mls C$ admits a faithful vector bundle. Therefore  the result follows from \ref{E:3.6} and the fact that the coarse space of $\mls C$ is a proper curve over a field and hence has an ample line bundle  (combine \cite[\href{https://stacks.math.columbia.edu/tag/0A26}{Tag 0A26}]{stacks-project} and \cite[\href{https://stacks.math.columbia.edu/tag/0ADD}{Tag 0ADD}]{stacks-project}).
\end{proof}

\begin{defn}
A \textit{family of tame Artin curves} (resp. \emph{family of tame orbicurves}) over a scheme $T$ consists of an algebraic stack $\mls C$ and a flat proper morphism $f:\mls C\to T$ which is locally of finite presentation such that for every geometric point $\bar t:\Sp (k)\to T$ the fiber $\mls C_{\bar t}$ is a tame Artin curve (resp. tame orbicurve) over $k$.
\end{defn}

\begin{lem}\label{lem:family of tame artin stacks is a tame artin stack}
    If $T$ is a scheme and $f:\mls C\to T$ is a family of tame Artin curves, then $\mls C$ is a tame Artin stack.
\end{lem}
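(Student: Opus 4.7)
The plan is to verify that $\mls C$ satisfies the three conditions characterizing a tame Artin stack over $T$ under the paper's running conventions: (i) $\mls C$ is algebraic and locally of finite presentation over $T$; (ii) the diagonal $\Delta_f : \mls C \to \mls C \times_T \mls C$ is finite; and (iii) every geometric stabilizer group scheme is finite and linearly reductive. Condition (i) is built into the definition of a family of tame Artin curves.

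For (iii), observe that for any geometric point $\bar c : \Sp k \to \mls C$ lying over $\bar t : \Sp k \to T$, the stabilizer $\underline{\mathrm{Aut}}_{\mls C}(\bar c)$ is canonically identified with $\underline{\mathrm{Aut}}_{\mls C_{\bar t}}(\bar c)$. The latter is finite and linearly reductive because, by hypothesis, $\mls C_{\bar t}$ is a tame Artin curve.

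For (ii), since $f$ is proper it is in particular separated, so $\Delta_f$ is proper; moreover, $\Delta_f$ is representable by algebraic spaces, being the diagonal of an algebraic stack. It therefore suffices to prove that $\Delta_f$ is quasi-finite, and this may be checked on geometric fibers. For geometric points $\bar c_1, \bar c_2 : \Sp k \to \mls C$ with common image $\bar t$ in $T$, the fiber of $\Delta_f$ over $(\bar c_1,\bar c_2)$ is $\underline{\mathrm{Isom}}_{\mls C_{\bar t}}(\bar c_1,\bar c_2)$, which is either empty or an $\underline{\mathrm{Aut}}_{\mls C_{\bar t}}(\bar c_1)$-torsor, hence finite by (iii). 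Therefore $\Delta_f$ is proper and quasi-finite, so finite.

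The argument amounts to transporting the fiberwise hypotheses (finite diagonal and linearly reductive stabilizers for each $\mls C_{\bar t}$) to the total space, using the properness of $f$ to supply the properness of $\Delta_f$; no serious obstacle is anticipated.
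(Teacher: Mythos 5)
Your proof is correct and follows essentially the same route as the paper's: properness of $f$ gives properness of the diagonal, finiteness of the geometric stabilizers (inherited from the tame fibers) gives quasi-finiteness, and proper plus quasi-finite implies finite. The only cosmetic difference is that the paper phrases the finiteness step in terms of the inertia stack $I_{\mls C}$ and invokes the criterion of \cite[3.2]{tame}, whereas you work with the full diagonal $\Delta_f$ directly; the underlying argument is identical.
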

\begin{proof}
Since the geometric fibers of $\mls C$ are tame, by \cite[3.2]{tame} the automorphism group scheme of every geometric point of $\mls C$ is finite and linearly reductive. Hence using \cite[3.2]{tame} again, to show that $\mls C$ is tame it is enough to show that its inertia stack $I_{\mls C}$ is finite over $\mls C$. To show this, we observe that as $f:\mls C\to T$ is proper, the diagonal
    \[
        \Delta_{\mls C/T}:\mls C\to\mls C\times_T\mls C
    \]
    is also proper. As $T$ is a scheme, the relative inertia stack $ I_{\mls C/T}$ is equal to $ I_{\mls C}$, and it follows that the map $I_{\mls C}\to\mls C$ is proper. Since geometric points of $\mls C$ have finite stabilizers, the morphism $ I_{\mls C}\to\mls C$ is quasi-finite, hence finite.
\end{proof}

\subsection{The stack of tame orbicurves}

\begin{pg}\label{P:5.8}
    Let $\mls M^{(2)}$ denote the $2$-category  fibered in 2--groupoids over the category of schemes whose objects are pairs $(\mls C,T)$ where $T$ is a scheme and $\mls C\to T$ is a family of tame orbicurves over $T$. A morphism $(\mls C,T)\to(\mls C',T')$ in $\mls M^{(2)}$ is a pair $(f,g)$, where $f:\mls C\to\mls C'$ is a morphism of stacks and $g:T\to T'$ is a morphism of schemes such that the diagram
    \[
        \begin{tikzcd}
            \mls C\arrow{d}\arrow{r}{f}&\mls C'\arrow{d}\\
            T\arrow{r}{g}&T'
        \end{tikzcd}
    \]
    cartesian. A 2--morphism $(f_0,g_0)\to (f_1,g_1)$ between two such 1--morphisms  exists only if $g_0 = g_1$, in which case it  consists of a  natural transformation $\theta:f_0\xrightarrow{\sim}f_1$. 
\end{pg}

\begin{pg}
    Associated to $\mls M^{(2)}$ is a category $\mls M$ fibered in groupoids over the category of schemes.
    Objects of $\mls M$ are pairs $(\mls C,T)$ where $T$ is an scheme and $\mls C\to T$ is a family of tame orbicurves over $T$, and a morphism $(\mls C,T)\to (\mls C',T')$ in $\mls M$ between two objects is an isomorphism class of 2--Cartesian diagrams
    \[
        \begin{tikzcd}
            \mls C\arrow{d}\arrow{r}&\mls C'\arrow{d}\\
            T\arrow{r}&T'.
        \end{tikzcd}
    \]

    \begin{prop}\label{P:8.11}
        The natural functor $\mls M^{(2)}\to\mls M$ is an equivalence. 
    \end{prop}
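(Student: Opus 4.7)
The plan is to show that for every family $\mls C\to T$ of tame orbicurves, the 2-automorphism group of $\mathrm{id}_{\mls C/T}$ is trivial. This suffices since essential surjectivity of $\mls M^{(2)}\to\mls M$ is built into the definition of $\mls M$, and the set of 2-morphisms between any two 1-morphisms in $\mls M^{(2)}$ with the same image in $\mls M$ is a torsor under this 2-automorphism group. By the universal property of the inertia, a 2-automorphism of $\mathrm{id}_{\mls C/T}$ corresponds to a section $\theta\colon\mls C\to I_{\mls C/T}$ of the structural morphism, and the claim becomes that $\theta$ equals the identity section $e$; finiteness of the diagonal of $\mls C/T$ makes $I_{\mls C/T}\to\mls C$ finite and $e$ a closed immersion, so the equalizer $\mls Z\subset\mls C$ of $\theta$ and $e$ is a closed substack.

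The plan is then to exploit the dense open subscheme hypothesis together with a det-ample vector bundle. Let $\mls U\subset\mls C$ denote the open locus of trivial stabilizers (open by upper semicontinuity of the rank of the finite morphism $I_{\mls C/T}\to\mls C$); each geometric fiber $\mls U_{\bar t}$ contains the prescribed dense open subscheme of $\mls C_{\bar t}$ and is therefore dense. Since $I_{\mls U/T}=\mls U$, we have $\theta\vert_\mls U=e\vert_\mls U$, hence $\mls U\subset\mls Z$. Working Zariski-locally on $T$, I would combine \ref{thm:existence of an ample vector bundle} (fibrewise existence) with the cohomology-and-base-change and spreading techniques used in the proof of \ref{P:non-noetherian}, relying on the vanishing in \ref{prop:vanishing on a curve} to kill deformation obstructions, to produce a vector bundle $\mls E$ on $\mls C$ that is det-ample in every geometric fiber. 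Because $\mls E$ is faithful, the map $\mathrm{Aut}(\mathrm{id}_{\mls C/T})\hookrightarrow\mathrm{Aut}(\mls E)$ sending $\theta\mapsto\theta^\#$ (via the natural action of the stabilizers on $\mls E$) is injective at every geometric point, so it suffices to show $\theta^\#=\mathrm{id}_\mls E$. Since the section $\psi:=\theta^\#-\mathrm{id}\in H^0(\mls C,\mls End(\mls E))$ vanishes on $\mls U$, it suffices to know that $\mls U$ is scheme-theoretically dense in $\mls C$.

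The main obstacle is establishing this scheme-theoretic density. Topological density of $\mls U$ is immediate from the definitions, but ruling out embedded associated primes of $\mls C$ supported in the zero-dimensional complement $\mls C\setminus\mls U$ is the subtle point, and some such hypothesis is unavoidable: nilpotent sections supported at a $\bmu_p$-stacky point in positive characteristic can in principle produce nontrivial $\bmu_p$-valued sections of the inertia, so any proof of the proposition has to rule this out. One expects this to be the correct interpretation of ``dense open subscheme'' on each geometric fiber, verified via the \'etale-local presentation of a tame orbicurve as $[W/G]$ near a stacky point (where $G$ acts schematically freely on the preimage of $\mls U_{\bar t}$). Flatness of $\mls C\to T$ then propagates the fiberwise scheme-theoretic density to the total space. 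Granting this, the conclusion is clean: $\psi=0$, hence $\theta^\#=\mathrm{id}$, and faithfulness of $\mls E$ forces $\theta=e$, completing the proof.
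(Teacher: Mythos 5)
Your reduction is the right one, and it is exactly what the paper's one-line proof outsources to \cite[4.2.3]{AVmaps}: essential surjectivity is automatic from the definitions, and the entire content is that the only section of $I_{\mls C/T}\to\mls C$ is the identity section $e$. Your first paragraph already contains the efficient way to finish: $I_{\mls C/T}\to\mls C$ is finite, so the equalizer $\mls Z$ of $\theta$ and $e$ is a closed substack containing the open locus $\mls U$ of trivial stabilizers, and all that is needed is that $\mls U$ be \emph{schematically} dense in $\mls C$ to force $\mls Z=\mls C$. The detour through a det-ample bundle $\mls E$ buys you nothing: your final step still hinges on the same schematic density (to kill $\psi$), while producing $\mls E$ globally on $\mls C/T$ is itself delicate --- the deformation argument in the proof of \ref{T:8.13} yields such a bundle only smooth- or \'etale-locally on $T$, which would be admissible here since the assertion is local on $T$, but it is extra machinery for no gain. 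So the proposal stands or falls with the schematic density claim, which is flagged but not proved.

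That is the genuine gap, and you have diagnosed it correctly rather than papered over it. Under a purely topological reading of ``dense open substack which is a scheme'' the statement really does fail: in characteristic $p$ consider a chart $[\Sp(k[t,s]/(s^2,st))/\bmu_p]$ with $\zeta$ acting by $t\mapsto\zeta t$, $s\mapsto s$. Then $c=1+es$ is $\bmu_p$-invariant, satisfies $c^p=1+pes=1$ and $(c-1)t=est=0$, hence defines a section of the inertia that restricts to $e$ on the topologically dense complement of the origin but is not equal to $e$. The citation of \cite[4.2.3]{AVmaps} conceals this point: there the curves are nodal, hence reduced, so topological density of the scheme locus upgrades automatically to schematic density; in the present non-reduced generality one must either read ``dense'' schematically in each geometric fiber (and then use flatness of $\mls C/T$ to propagate relative schematic density to the total space) or otherwise exclude embedded components at stacky points. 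Your proof is therefore incomplete as written --- the decisive step is stated as an expectation rather than established --- but the expectation is the correct one, and your instinct that no argument can avoid it is vindicated by the example above.
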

    \begin{proof}
    This follows from \cite[4.2.3]{AVmaps}, which implies that a $2$-morphism $(f_0,g_0)\to (f_1,g_1)$ as in \ref{P:5.8} is unique if it exists.
    \end{proof}
\end{pg}

\begin{prop}\label{P:7.11} $\mls M$ is a stack.
\end{prop}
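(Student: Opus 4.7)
The plan is to verify the two standard stack axioms (descent of morphisms and effective descent of objects) in the étale topology, in the same spirit as the reduction used for $\mls S$ in \ref{P:6.2b}. Thanks to \ref{P:8.11}, a morphism in $\mls M(T')$ is simply an isomorphism of $T'$-stacks between the base changes of the families, which already reduces the problem from a potential $2$-categorical descent question to an ordinary $1$-categorical one. Moreover, by \ref{lem:family of tame artin stacks is a tame artin stack} the underlying fibered category objects are themselves algebraic stacks, so we may use standard descent results for algebraic stacks throughout.

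For descent of morphisms, I would fix $(\mls C,T), (\mls C',T) \in \mls M(T)$ and argue that the presheaf
\[
    (g\colon T'\to T) \longmapsto \mathrm{Isom}_{T'}(\mls C_{T'}, \mls C'_{T'})
\]
on étale $T$-schemes is a sheaf. This reduces to the well-known fact that isomorphisms of algebraic stacks satisfy étale descent: a collection of isomorphisms $\phi_i\colon \mls C_{T_i}\xrightarrow{\sim}\mls C'_{T_i}$ over the members of an étale cover $\{T_i\to T\}$ that agree on overlaps glues uniquely to an isomorphism $\mls C\xrightarrow{\sim}\mls C'$ over $T$.

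For effective descent of objects, given an étale cover $\{T_i\to T\}$, objects $\mls C_i\in\mls M(T_i)$, and gluing isomorphisms $\phi_{ij}\colon \mls C_i|_{T_{ij}}\xrightarrow{\sim}\mls C_j|_{T_{ij}}$ satisfying the cocycle condition, I would invoke descent for algebraic stacks to glue the data $(\{\mls C_i\},\{\phi_{ij}\})$ into an algebraic stack $\mls C\to T$ whose base change to each $T_i$ is $\mls C_i$. The conditions of being flat, proper, and locally of finite presentation on $\mls C\to T$ descend from the $T_i$ by standard descent of morphism properties. Since formation of geometric fibers commutes with base change and every geometric point of $T$ is dominated by a geometric point of some $T_i$, the geometric fibers of $\mls C\to T$ coincide with those of the $\mls C_i$ and are therefore tame orbicurves, so $\mls C\in\mls M(T)$.

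The main obstacle is invoking descent for algebraic stacks in a form strong enough to produce $\mls C$ from $(\{\mls C_i\},\{\phi_{ij}\})$; once this is granted the remaining checks are routine. This can be handled directly by standard results on descent of groupoids in algebraic spaces, or (in the spirit of \ref{P:6.2b}) by appealing to the fact that algebraic stacks form a $2$-stack in the étale topology and then using \ref{P:8.11} to collapse to the $1$-categorical level.
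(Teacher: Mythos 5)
Your plan is correct, and it rests on the same two pillars as the paper's proof: \'etale descent for stacks over schemes (the $2$-stack property, which the paper cites from Breen and you invoke as descent for algebraic stacks/groupoids in algebraic spaces) together with the rigidity statement \ref{P:8.11}, which collapses the $2$-categorical descent problem to a $1$-categorical one and in particular makes the Isom presheaves genuine sheaves rather than merely stacks. Where you diverge is in how the defining conditions of the subcategory are handled. You observe that flatness, properness, and finite presentation descend, and that the remaining conditions (fibers are tame Artin curves with a dense open subscheme) are purely fiberwise and hence automatically \'etale-local on the base; this is enough to conclude that $\mls M$ is a full substack of the stack of all such families, so your argument is complete for this proposition. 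The paper instead proves the stronger claim that these conditions are \emph{open} on the base: the dimension-one condition is open and closed by checking on coarse spaces, and the dense-open-subscheme condition is open by writing $\mls O_{I_{\mls C}}\simeq \mls O_{\mls C}\oplus\mls F$, taking the support $\mls Z$ of $\mls F$, and applying upper semicontinuity of fiber dimension to the proper morphism $\mls Z\to\mls M_1$. That openness is not needed for \ref{P:7.11} itself, but it is reused in the proof of \ref{T:8.13} to see that $\mls M^{\mls E_r}$ is an \emph{open} (hence algebraic) substack of $\mls S_r$; so if you follow your route you will still need the inertia-support argument later.
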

\begin{proof}
Let $\widetilde{\mls M}^{(2)}(T)$ denote the 2-groupoid of stacks over $T$. We claim that the condition that an object $\mls X$ of $\widetilde{\mls M}^{(2)}(T)$ lies in $\mls M^{(2)}(T)$ is open on $T$. Then the proposition follows from the fact that the fibered 2-category assembled from $\widetilde{\mls M}^{(2)}(T)$ is a 2-stack by \cite[Example 1.11 (i)]{Breen}.

We now prove the claim, First, by \cite[\href{https://stacks.math.columbia.edu/tag/0D4R}{Tag 0D4R}]{stacks-project} 
        the condition of being relative dimension $1$, which can be checked on coarse spaces, is an open and closed condition.  We let $\mls M_1$ denote the corresponding stack of 1-dimensional tame stacks and let $\mls C \to \mls M_1$ denote the universal object. Second, the inertia stack $I_{\mls C}\rightarrow \mls C$ is finite over $\mls C$ and admits a section (the zero-section), and therefore corresponds to a coherent sheaf of algebras $\mls O_{I_{\mls C}}\simeq \mls O_{\mls C}\oplus \mls F$.  Let $\mls Z\subset \mls C$ be the support of $\mls F$ so that the complement of $\mls Z$ is the maximal open substack of $\mls C$ representable by an algebraic space. Then $\mls Z \to \mls M_1$ is proper and by upper semicontinuity of fiber dimensions there is an open substack of $\mls M_1$ where the fibers of $\mls Z$ are zero-dimensional. This is precisely the locus where geometric fibers of $\mls C \to \mls M_1$ have a dense subscheme. 
\end{proof}

     We call $\mls M$ the \emph{stack of tame orbicurves}. 

\begin{thm}\label{T:8.13} $\mls M$ is an algebraic stack.
\end{thm}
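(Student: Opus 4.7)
The plan is to exhibit $\mls M$ as the target of a smooth, surjective morphism from the algebraic stack $\mls S := \sqcup_{r \geq 1} \mls S_r$ of \ref{T:5.1}, via the forgetful rule $(\mls C, \mls E) \mapsto \mls C$. First I would define $\mls S^{\circ} \subset \mls S$ to be the open substack parameterizing those $(\mls C, \mls E)$ for which $\mls C \to T$ is a family of tame orbicurves. Openness is established exactly as in the proof of \ref{P:7.11}: being of relative dimension one is open and closed on the coarse space by \cite[\href{https://stacks.math.columbia.edu/tag/0D4R}{Tag 0D4R}]{stacks-project}, and the condition that every geometric fiber contains a dense open subscheme is open by upper semicontinuity of the fiber dimension of the relative inertia away from its identity section.

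I would then verify that the forgetful morphism
\[
p \colon \mls S^{\circ} \longrightarrow \mls M, \qquad (\mls C, \mls E) \mapsto \mls C,
\]
is representable by algebraic stacks, smooth, and surjective. For representability, the 2-fiber product $\mls S^{\circ} \times_{\mls M} T$ along a morphism $T \to \mls M$ classifying $\mls C \to T$ is, on its rank-$r$ component, the open substack of rank-$r$ vector bundles on $\mls C/T$ which are det-ample on every geometric fiber; openness of det-ampleness is \ref{R:non-noetherian}, and algebraicity of the stack of rank-$r$ vector bundles on a proper flat tame stack follows from Hom-stack results for tame stacks (identify this stack with $\Hom_T(\mls C, B\mathbf{GL}_{r,T})$; see also \cite{OS03}). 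Smoothness of $p$ reduces, by standard deformation theory of vector bundles, to the vanishing of the obstruction space $\mathrm{Ext}^2(\mls E, \mls E)$ for any vector bundle $\mls E$ on a tame Artin curve over a field, and this vanishing is exactly \ref{prop:vanishing on a curve}. Surjectivity follows from \ref{thm:existence of an ample vector bundle}, which supplies a det-ample vector bundle on every tame orbicurve over an algebraically closed field.

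With a smooth, surjective, representable morphism $p \colon \mls S^{\circ} \to \mls M$ from an algebraic stack in hand, I would conclude as follows. Composing $p$ with a smooth cover of $\mls S^{\circ}$ by a scheme produces a smooth surjective morphism to $\mls M$ from a scheme. Diagonal representability for $\mls M$ amounts to the statement that for two families $\mls C_1, \mls C_2 \to T$ of tame orbicurves the functor $\mathrm{Isom}_T(\mls C_1, \mls C_2)$ is representable by an algebraic space; this follows from algebraicity of the Hom-stack $\Hom_T(\mls C_1, \mls C_2)$ (again by the tame analogue of Olsson's Hom-stack theorem) together with \ref{P:8.11}, which guarantees that there are no nontrivial 2-automorphisms on the Isom locus so that it is a sheaf rather than a stack. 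Together these imply $\mls M$ is algebraic. The main obstacle is establishing smoothness and algebraicity of the stack of vector bundles on a family of tame orbicurves, since once this is in hand the remaining ingredients fit together routinely.
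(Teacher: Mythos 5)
Your proposal is correct, and its core coincides with the paper's argument: the same cover (the open substack of $\sqcup_r \mls S_r$ where the underlying family is a family of tame orbicurves, which the paper calls $\mls M^{\mls E}$), the same openness argument borrowed from the proof of \ref{P:7.11}, surjectivity via \ref{thm:existence of an ample vector bundle}, algebraicity of the fibers of the forgetful map as an open substack of the stack of vector bundles on $\mls C/T$ (the paper cites \cite[1.2]{HallRydh} where you invoke Hom-stack results), and formal smoothness via the vanishing of the degree-$2$ obstruction group supplied by \ref{prop:vanishing on a curve}. The one genuine divergence is in the final assembly. You deduce algebraicity in the classical way: verify separately that the diagonal of $\mls M$ is representable by algebraic spaces, using algebraicity of $\Hom_T(\mls C_1,\mls C_2)$ for tame stacks together with \ref{P:8.11} to see that the Isom locus is fibered in setoids. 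The paper instead proves a bootstrapping statement (\ref{lem:covering stack lemma}): given a formally smooth morphism $\pi:\mls U\to\mls M$ from an algebraic stack whose base changes to schemes are algebraic, lfp and surjective, one checks that $U\times_{\mls M}T$ is an algebraic space for a scheme chart $U\to\mls U$ (the projection $\mls U\times_{\mls M}T\to\mls U$ is faithful since $T$ is an algebraic space, hence representable), presents $\mls M$ as $[U/R]$ with $R=U\times_{\mls M}U$, and concludes by \cite[\href{https://stacks.math.columbia.edu/tag/04TK}{Tag 04TK}]{stacks-project}. The paper's route therefore never needs the Hom-stack theorem for the diagonal, which is the heaviest extra input your version requires; your route buys a more familiar statement of what is being checked. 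Two small points to make explicit if you pursue your version: a deformation of $\mls E_A$ over a square-zero extension is automatically det-ample in every geometric fiber because the geometric fibers of the family do not change (the paper records this via \ref{P:non-noetherian}), and the smooth surjection from a scheme is representable by algebraic spaces only after the diagonal condition is in place, so the diagonal must be established first.
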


Before giving the proof we record the following general result.

    \begin{lem}\label{lem:covering stack lemma}
        Let $S$ be a scheme. Let $\mls U$ be an algebraic stack over $S$ and let $\mls M$ be a stack over $S$. Suppose that there exists a morphism $\pi:\mls U\to\mls M$ of stacks over $S$ such that
        \begin{itemize}
            \item [(i)] for any morphism $T\to\mls M$ from an $S$--scheme $T$, the fiber product $\mls U\times_{\mls M}T$ is an algebraic stack and the morphism $\mls U\times_{\mls M}T\to T$ is locally of finite presentation and surjective, and
            \item [(ii)] $\pi$ is formally smooth, in the sense that given a square-zero extension $A'\twoheadrightarrow A$ of rings and a 2--commutative diagram
        \begin{equation}\label{eq:deformation diagram}
        \begin{tikzcd}
            \Sp (A)\arrow{r}\arrow[hook]{d}&\mls U\arrow{d}{\pi}\\
            \Sp (A')\arrow{r}\arrow[dashed]{ur}&\mls M
        \end{tikzcd}
        \end{equation}
        of solid arrows, there exists a dashed arrow rendering the resulting diagram 2--commutative.
        \end{itemize} 
        Then $\mls M$ is an algebraic stack.
    \end{lem}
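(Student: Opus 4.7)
The plan is to realize $\mls M$ as the quotient of a smooth groupoid in algebraic spaces constructed from a smooth atlas of $\mls U$, using $\pi$ to transfer algebraicity from $\mls U$ down to $\mls M$.

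First, I would observe that hypotheses (i) and (ii) together imply that $\pi:\mls U\to\mls M$ is smooth in the following sense: for any morphism $T\to\mls M$ from a scheme $T$, the base change $\mls U_T := \mls U\times_{\mls M}T$ is an algebraic stack which is locally of finite presentation and surjective over $T$ by (i), and formally smooth over $T$ by base-changing (ii). Choosing a smooth atlas of $\mls U_T$ by a scheme reduces smoothness of $\mls U_T\to T$ to classical smoothness of a morphism of schemes, so $\mls U_T\to T$ is smooth.

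Next, I would choose a smooth surjection $p:V\to\mls U$ with $V$ a scheme and analyze the composition $q := \pi\circ p: V\to\mls M$. For any scheme $T$ with morphism to $\mls M$, the fiber product $V\times_{\mls M}T = V\times_{\mls U}\mls U_T$ is an algebraic stack, schematic and smooth over $\mls U_T$ (as a base change of $p$), hence smooth and surjective over $T$ by the previous step. Since both factors $V$ and $T$ are schemes, the fibered category $V\times_{\mls M}T$ has no nontrivial morphisms, so it is a setoid; combined with algebraicity this makes it an algebraic space. Thus $q$ is representable by algebraic spaces, smooth, and surjective.

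Setting $R := V\times_{\mls M}V$, the previous step gives a smooth groupoid in algebraic spaces $R\rightrightarrows V$, with composition induced by composition of isomorphisms in $\mls M$. The remaining work is to identify $\mls M$ with the quotient stack $[V/R]$. Essential surjectivity of the canonical morphism $[V/R]\to\mls M$ follows from smooth-local existence of lifts of objects of $\mls M$ along $q$ (using that $V\times_{\mls M}T\to T$ is smooth surjective), and fully-faithfulness is immediate from the definition $R = V\times_{\mls M}V$. Hence $\mls M\simeq[V/R]$ is the quotient of a smooth groupoid in algebraic spaces, and therefore is an algebraic stack.

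The main obstacle is the representability claim in the second step: showing that $V\times_{\mls M}T$ is an algebraic space (not just a stack) requires combining the algebraicity of the intermediate stack $\mls U_T$ from (i), the representability of $p$ by schemes, and the fact that a fiber product of schemes over a stack is automatically a setoid. The role of formal smoothness (ii) is confined to the first step, where it promotes ``locally of finite presentation'' to ``smooth''.
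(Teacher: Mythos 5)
Your proposal is correct and follows essentially the same route as the paper: choose a smooth atlas $V\to\mls U$, use (i) and (ii) to show $V\to\mls M$ is representable by algebraic spaces, smooth, and surjective (the paper likewise invokes the setoid/representability point via \cite[\href{https://stacks.math.columbia.edu/tag/04Y5}{Tag 04Y5}]{stacks-project}), and then present $\mls M\simeq[V/R]$ with $R=V\times_{\mls M}V$. The only point worth tightening is the passage from formal smoothness plus local finite presentation of $\mls U\times_{\mls M}T\to T$ to smoothness, which deserves a citation to the infinitesimal lifting criterion for algebraic stacks rather than an appeal to an atlas alone.
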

    \begin{proof}
        Let $U\rightarrow \mls U$ be a smooth surjective morphism with $U$ a scheme.  Then the composition $U\rightarrow \mls U\xrightarrow{\pi} \mls M$ is representable, smooth, and surjective. 
        Indeed given any morphism $T\rightarrow \mls M$ from a scheme $T$ we have 
        $$
U\times _{\mls M}T\simeq U\times _{\mls U}(\mls U\times _{\mls M}T).
$$
By assumption (i) $\mls U\times _{\mls M}T$ is an algebraic stack which by (ii) is smooth over $T$, and since $U\rightarrow \mls U$ is smooth and surjective it follows that $U\times _{\mls M}T$ is also smooth and surjective over $T$. Moreover $\mls U \times_{\mls M} T \to \mls U$ is representable (using \cite[\href{https://stacks.math.columbia.edu/tag/04Y5}{Tag 04Y5}]{stacks-project} and the fact that $T$ is an algebraic space), so $U \times_{\mls M} T$ is an algebraic space. In particular, setting $R:= U\times _{\mls M}U$ we get a groupoid in algebraic spaces and an equivalence $[U/R]\simeq \mls M$.  The result therefore follows from \cite[\href{https://stacks.math.columbia.edu/tag/04TK}{Tag 04TK}]{stacks-project}.  
    \end{proof}

    \begin{proof}[Proof of \ref{T:8.13}]
       For a positive integer $r$, let $\mls S_r$ denote the stack from \S\ref{S:stackofstacks}. Let $\mls M^{\mls E_r} \subset \mls S_r$ denote the full substack where $\mls X \to T$ is a family of tame orbicurves. As in the proof of \ref{P:7.11}  the conditions defining $\mls M^{\mls E_r}$ are open, so if we set $\mls M^{\mls E}:= \sqcup_{r \in \mathbf{Z}>0} \mls M^{\mls E_r}$, then we have from \ref{T:5.1} that $\mls M^{\mls E}$ is an algebraic stack. Consider the forgetful morphism
        \[
            p:\mls M^{\mls E}\to\mls M
        \]
        given on objects by $(\mls C,\mls E)\mapsto \mls C$. We will verify the conditions of Lemma \ref{lem:covering stack lemma} for $p$. Suppose given a scheme $T$ and a morphism $T\to\mls M$, corresponding to a family $\mls C\to T$ of tame Artin curves over $T$. Let $\mls U=\mls M^{\mls E}\times_{\mls M}T$ be the fiber product, so that we have a 2--Cartesian diagram
        \[
            \begin{tikzcd}
                \mls U\arrow{r}\arrow{d}[swap]{p_T}&\mls M^{\mls E}\arrow{d}{p}\\
                T\arrow{r}&\mls M.
            \end{tikzcd}
        \]
        Explicitly, $\mls U$ is the stack over $T$ whose fiber over a $T$--scheme $T'$ is the groupoid of vector bundles on $\mls C'=\mls C\times_TT'$ that are det-ample in every geometric fiber. 

\begin{lem} $\mls U$ is an algebraic stack locally of finite presentation over $T$.
\end{lem}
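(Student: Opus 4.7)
The plan is to realize $\mls U$ as an open substack of the stack of all vector bundles on $\mls C/T$, which I will show is algebraic and locally of finite presentation using the Hom-stack theorem of Olsson--Starr.

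First I would decompose by rank. Since the rank of a vector bundle is locally constant on its base, writing $\mls U = \bigsqcup_{r \geq 0} \mls U_r$ where $\mls U_r$ classifies those bundles of constant rank $r$ on $\mls C'$ is a decomposition into open and closed substacks, and it suffices to show each $\mls U_r$ is algebraic and locally of finite presentation over $T$. A rank-$r$ vector bundle on $\mls C'$ is the same data as a morphism $\mls C' \to B\mathbf{GL}_{r, T'}$, so the stack of rank-$r$ vector bundles on $\mls C/T$ is identified with the relative Hom stack $\underline{\mathrm{Hom}}_T(\mls C, B\mathbf{GL}_{r, T})$.

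Next, I would invoke \cite[1.1]{OS03}, which as remarked in the proof of \ref{T:5.1} generalizes from Deligne--Mumford stacks to tame Artin stacks by the same argument, to conclude that $\underline{\mathrm{Hom}}_T(\mls C, B\mathbf{GL}_{r, T})$ is an algebraic stack locally of finite presentation over $T$. The hypotheses apply because $\mls C \to T$ is flat, proper, locally of finite presentation, and tame (by \ref{lem:family of tame artin stacks is a tame artin stack}), and $B\mathbf{GL}_{r,T}$ is a smooth algebraic stack.

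Finally, I would show that $\mls U_r$ is an open substack of this Hom stack. The inclusion is the locus where the universal bundle is det-ample in every geometric fiber. To check this is open one may work locally on the Hom stack, reducing to the case of a quasi-compact affine base $S$ equipped with a vector bundle $\mls E$ on $\mls C \times_T S$; here \ref{R:non-noetherian} produces an open subscheme of $S$ cut out by exactly the det-ampleness condition in geometric fibers, giving the required openness. Thus each $\mls U_r$ is open in an algebraic stack locally of finite presentation over $T$, and the disjoint union $\mls U = \bigsqcup_r \mls U_r$ has the same property. The only subtle input is the Hom-stack algebraicity, which is quoted; the rest is assembling pieces already in the paper.
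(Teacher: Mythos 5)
Your argument is essentially the paper's: realize $\mls U$ inside the stack of vector bundles on $\mls C$ and cut out the det-ample locus, whose openness comes from \ref{R:non-noetherian} exactly as you say. The one real problem is the citation you lean on for the key algebraicity input. The reference \cite[1.1]{OS03} is the Quot-functor theorem, and the parenthetical in the proof of \ref{T:5.1} about it ``generalizing to tame stacks'' concerns replacing a Deligne--Mumford \emph{source} by a tame Artin stack in the Hilbert-functor context; it does not give algebraicity of $\Hom_T(\mls C, B\mathbf{GL}_{r,T})$, whose \emph{target} $B\mathbf{GL}_r$ is an Artin stack with positive-dimensional (non-finite) stabilizers and so lies outside the scope of that result even after the tame generalization. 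The statement you need is true, but it requires the Hom-stack theorem for targets with affine stabilizers; the paper instead simply quotes \cite[1.2]{HallRydh} for the algebraicity of the stack of vector bundles on $\mls C$ (which amounts to the same Hom-stack statement with the correct reference). With that citation repaired, your rank decomposition, the identification with the Hom stack, and the openness argument via \ref{R:non-noetherian} all go through and reproduce the paper's proof.
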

\begin{proof}
The stack of all vector bundles on $\mls C$ is algebraic by \cite[1.2]{HallRydh}.  Since the condition that a vector bundle is $\det $-ample in every geometric fiber is open by \ref{P:non-noetherian}, the lemma follows. 
\end{proof}
       By \ref{thm:existence of an ample vector bundle}, the morphism $p$  is surjective on $k$--points for any field $k$, and so is surjective as a morphism of stacks. It remains to prove that $p$ is formally smooth. Consider a square--zero extension $A'\twoheadrightarrow A$ of rings with kernel $I\subset A'$ and a 2--commutative diagram 
    \begin{equation}\label{eq:deformation diagram2}
        \begin{tikzcd}
            \Sp (A)\arrow{r}{x_A}\arrow[hook]{d}&\mls M^{\mls E}\arrow{d}{\pi}\\
            \Sp (A')\arrow{r}{x_{A'}}\arrow[dashed]{ur}&\mls M
        \end{tikzcd}
    \end{equation}
    of solid arrows. The map $x_{A'}$ corresponds to a family $\mls C_{A'}$ of tame orbicurves over $\Sp (A')$, and the map $x_A$ corresponds to a family $\mls C_{A}$ of tame orbicurves over $\Sp (A)$ equipped with a vector bundle $\mls E_A$ that is det-ample in every geometric fiber and a 2--Cartesian diagram
    \[
        \begin{tikzcd}
            \mls C_A\arrow{d}\arrow[hook]{r}&\mls C_{A'}\arrow{d}\\
            \Sp (A)\arrow[hook]{r}&\Sp (A').
        \end{tikzcd}
    \]
    There is a canonical obstruction class
    \[
        \mathrm{ob}\in{H}^2(\mls C_A,\sEnd(\mls E_A)\otimes I)
    \]
    whose vanishing is equivalent to the existence of a flat deformation of $\mls E_A$ to a vector bundle on  $\mls C_{A'}$ (this is classical; a reference in a much more general setting is \cite[IV, 3.1.5]{Illusie}).  By \ref{P:non-noetherian} any such deformation is automatically det-ample in every geometric fiber.  Thus, the class $\mathrm{ob}$ vanishes if and only if there exists a dashed arrow rendering the diagram~\eqref{eq:deformation diagram2} 2--commutative. 
    Using that $\Sp(A)$ is affine and arguing as in the proof of \ref{prop:vanishing on a curve} we see that ${H}^2(\mls C_A,\sEnd(\mls E_A)\otimes I)$ vanishes, and hence ob is always zero.
    This completes our verification that $p$ is formally smooth.
    \end{proof}

\begin{rem}
    Let $\mls C\to T$ be a family of tame orbicurves with coarse moduli space $\mls C\to C$. Then $C\to T$ is again flat and proper, and the moreover the formation of the coarse moduli space is compatible with arbitrary base change on $T$. Thus, letting $\mls M^c$ denote the stack of curves in the sense of \cite[\href{https://stacks.math.columbia.edu/tag/0DMJ}{Tag 0DMJ}]{stacks-project}, the association $\mls C\mapsto C$ defines a morphism $\mls M\to\mls M^c.$
\end{rem}

\begin{rem}
    It is natural to attempt to remove the tameness assumption, and consider the stack parameterizing proper stacks of dimension $1$ with generically trivial stabilizers. We do not know whether this stack is algebraic. We point out that the crucial vanishing of coherent cohomology in degrees $\geq 2$ fails in the wild case.
\end{rem}

\begin{appendix}
    \section{Ample line bundles on algebraic spaces}
The results in this appendix are presumably well-known but we include them here for lack of a reference.

    Let $R$ be a noetherian ring.  If $X$ is an algebraic space over $R$ and $\mc O_X(1)$ is an invertible sheaf on $X$, then for an integer $n$ and coherent sheaf $\mls F$ on $X$ write $\mls O_X(n)$ for $\mls O_X(1)^{\otimes n}$ and $\mls F(n)$ for $\mls F\otimes \mls O_X(n)$.

When $X$ is a scheme, the following result follows from \cite[\href{https://stacks.math.columbia.edu/tag/01Q3}{Tag 01Q3}]{stacks-project}.

    \begin{thm}\label{T:A.1} Let $X$ be a separated algebraic space of finite type over $R$. Assume that for every coherent sheaf $\mls F$ on $X$ there exists an integer $n_0$ such that $\mls F(n)$ is generated by global sections for $n\geq n_0$.  Then $X$ is a scheme and $\mls O_X(1)$ is ample on $X$.
    \end{thm}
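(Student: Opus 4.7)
The plan is to use the hypothesis to exhibit $X$ as a scheme; once that is done, the ampleness of $\mls O_X(1)$ on the scheme $X$ follows from the standard characterization of ample line bundles on noetherian schemes by global generation of twists (e.g., \cite[\href{https://stacks.math.columbia.edu/tag/01Q3}{Tag 01Q3}]{stacks-project}). To show $X$ is a scheme, I would construct a monomorphism $\phi\colon X \to \mathbf{P}^N_R$ of finite type into a projective space. Since any separated, locally quasi-finite morphism of algebraic spaces is representable by schemes (a theorem of Knutson), this will force $X$ to be a scheme.

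First, apply the hypothesis with $\mls F = \mls O_X$ to obtain an integer $n_0$ such that $\mls O_X(n)$ is globally generated for $n \geq n_0$. By noetherianity of $X$, finitely many global sections $s_0, \ldots, s_N \in H^0(X, \mls O_X(n))$ generate $\mls O_X(n)$, producing a morphism $\phi_n \colon X \to \mathbf{P}^N_R$.

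The heart of the argument is to show that, for $n$ sufficiently large and sufficiently many sections, the morphism $\phi_n$ is a monomorphism. A morphism locally of finite presentation is a monomorphism if and only if it is universally injective and unramified, so it suffices to show $\phi_n$ separates geometric points and tangent directions. Given two distinct closed points $x \ne y$ of $X$, applying the hypothesis to the coherent ideal sheaf $\mls I_{\{x\}}$ of the reduced point $\{x\}$ yields, for large $n$, a section of $\mls O_X(n)$ that vanishes at $x$ but not at $y$; this separates $\phi_n(x)$ from $\phi_n(y)$ in $\mathbf{P}^N_R$. For separating tangent directions at a point $x$, applying the hypothesis to $\mls I_x^2$ provides sections whose residues in the cotangent space $\mathfrak m_x / \mathfrak m_x^2$ span; this ensures $\phi_n$ is unramified at $x$.

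The main obstacle is producing a single integer $n$ and finite family of sections that achieves the required separation uniformly over all points and tangent directions, since the hypothesis only supplies an $n$ depending on the particular coherent sheaf. I would address this by a noetherian induction on closed subspaces of $X$ analogous to the scheme-theoretic treatment: starting from a candidate $\phi_n$, identify the closed subspace where the separation fails, apply the hypothesis to an appropriate ideal sheaf supported on (or related to) that locus to produce additional sections (after possibly increasing $n$), and iterate. Because $X$ is noetherian, this process terminates, yielding the desired monomorphism and completing the reduction to the scheme-theoretic case.
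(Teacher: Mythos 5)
Your overall strategy --- exhibit a separated, locally quasi-finite morphism from $X$ to a projective target, invoke Knutson's theorem to conclude that $X$ is a scheme, and then apply \cite[\href{https://stacks.math.columbia.edu/tag/01Q3}{Tag 01Q3}]{stacks-project} --- is essentially the paper's. The genuine gap is exactly at the step you flag as the main obstacle, and the proposed noetherian induction does not close it. To get a single morphism $\phi_n\colon X\to \mathbf{P}^N_R$ you need one integer $n$ and one finite set of sections that separate \emph{all} pairs of points simultaneously, while the hypothesis only supplies an $n$ for each individual sheaf $\mls I_{\{x\}}$, of which there are infinitely many. For the induction to run you would need ``the closed subspace where separation fails'' to make sense, but the locus where $\phi_n$ fails to be injective is \emph{not} closed in general: $X$ is only separated of finite type, not proper, so the image in $X$ of $(X\times_{\mathbf{P}^N}X)\setminus\Delta$ need not be closed. (The non-unramified locus is closed, so that half could in principle be treated this way --- and since unramified already implies locally quasi-finite, you do not actually need a monomorphism at all --- but even then, adjoining sections of a higher power produces a \emph{different} morphism, and you must separately argue that the separations already achieved persist; this requires a twisting/Segre-type argument you have not supplied.)

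The paper avoids the uniformity problem entirely by taking the target to be $\mathrm{Proj}(S)$ with $S=\oplus_{n\geq 0}H^0(X,\mls O_X(n))$ the total section ring, so that sections of all degrees are available at once and no single $n$ ever has to work for every point. It then proves only the much weaker statement that $\rho\colon X\to \mathrm{Proj}(S)$ is quasi-finite, checking this on a stratification of $X$ by schemes $T_i$ (\cite[\href{https://stacks.math.columbia.edu/tag/0BBN}{Tag 0BBN}]{stacks-project}) by showing that the opens $X_s\cap T_i$ generate the topology of each $T_i$; this only requires, for each reduced closed subspace $\overline Z$ and point $y\notin \overline Z$, a section of \emph{some} power of $\mls O_X(1)$ vanishing on $\overline Z$ and not at $y$ --- a one-ideal-sheaf-at-a-time application of the hypothesis with no uniform $n$ needed. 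If you want to salvage your route, replace $\mathbf{P}^N_R$ by $\mathrm{Proj}(S)$ (or aim only for unramifiedness rather than a monomorphism); as written, the argument has a hole at the uniformity step.
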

    \begin{proof}
        First of all, replacing $\mls O_X(1)$ by $\mls O_X(n)$ for suitable $n$ we may assume that $\mls O_X(n)$ is generated by global sections for all $n\geq 1$.  Let $S$ be the graded ring $\oplus _{n\geq 0}H^0 (X, \mls O_X(n))$ so we obtain a morphism $\rho :X\rightarrow \text{Proj}(S)$.  We claim that this morphism is quasi-finite.  If we show this then it follows that $X$ is a scheme, by \cite[\href{https://stacks.math.columbia.edu/tag/0418}{Tag 0418}]{stacks-project}, and therefore $\mc L$ is ample \cite[\href{https://stacks.math.columbia.edu/tag/01Q3}{Tag 01Q3}]{stacks-project}.

        To prove that $\rho $ is quasi-finite 
         note that by \cite[\href{https://stacks.math.columbia.edu/tag/0BBN}{Tag 0BBN}]{stacks-project} there exists a finite collection $T_i\subset X$ of disjoint locally closed subspaces with each $T_i$ a scheme and covering $X$ ($X$ admits a stratification by schemes).  
        To prove the quasi-finiteness of $\rho $ it then  suffices to show that its restriction $\rho :T_i\rightarrow \text{Proj}(S)$ is quasi-finite for each $i$.  For this it suffices, in turn, to show that the  open sets $X_s\cap T_i\subset T_i$, as $s$ varies over homogeneous elements of $S$, generate the topology of $T_i$.  For this statement, let $U\subset T_i$ be an open set and let $Z\subset T_i$ be its complement.  Let $\overline Z\subset X$ be the closure of $Z$ in $X$, viewed as a subscheme with the reduced induced structure.  Fix also a point $y\in Y$.  If $I\subset \mls O_X$ is the ideal sheaf of $\overline Z$ then we can find an integer $n>0$ such that $I(n)$ is generated by global sections and therefore there exists a section $s\in H^0(X, \mls O_X(n))$ whose zero locus contains $Z$ and which is nonzero at $y$.  For such a section $s$ we have $y\in X_s\cap T_i\subset U$.
    \end{proof}

The following result is similar to \cite[Proof of Theorem 1]{Kubota}, which a priori applies when $X$ is a scheme.
\begin{prop}\label{P:A.2b} Let $f:P\rightarrow X$ be a proper morphism of algebraic spaces of finite type over $R$
and  let $\mls O_P(1)$ be an ample invertible sheaf on $P$.  Then for every coherent sheaf $\mls F$ on $P$ there exists an integer $n_0$ such that for all $n\geq n_0$ the sheaf $f_*\mls F(n)$ is generated by global sections.
\end{prop}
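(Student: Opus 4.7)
The plan is to adapt the scheme-case argument of Kubota (Theorem~1 of \cite{Kubota}) to accommodate $X$ being a general algebraic space rather than a scheme. First, \ref{T:A.1} applied to $P$ and $\mls O_P(1)$ yields that $P$ is itself a Noetherian scheme on which $\mls O_P(1)$ is ample in the classical sense. In particular we may choose an integer $d > 0$ and sections $s_0, \ldots, s_N \in H^0(P, \mls O_P(d))$ whose basic nonvanishing loci $P_{s_i}$ are affine and cover $P$. Since $\mls O_P(1)$ is in particular $f$-ample and $f$ is proper, relative Serre vanishing gives, for any coherent $\mls F$ on $P$ and $n$ sufficiently large, both $R^i f_*\mls F(n) = 0$ for $i > 0$ and (via cohomology and base change) $f_*\mls F(n) \otimes k(x) \simeq H^0(P_x, \mls F(n)|_{P_x})$ for every $x \in X$.

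By Nakayama, $f_*\mls F(n)$ is globally generated on $X$ provided that one can find finitely many global sections of $\mls F(n)$ on $P$ whose images span $f_*\mls F(n) \otimes k(x) \simeq H^0(P_x, \mls F(n)|_{P_x})$ at every $x \in X$. Such sections are constructed using the affine cover: for each $i$, since $P_{s_i}$ is affine and $\mls F|_{P_{s_i}}$ coherent, pick finitely many $\Gamma(P_{s_i}, \mls O_P)$-module generators $m_{i,1}, \ldots, m_{i,k_i} \in \Gamma(P_{s_i}, \mls F)$. By the standard extension property for sections on basic opens of ample invertible sheaves, there is an integer $M$ such that each $s_i^M m_{i,j}$ lifts to a global section $\tilde m_{i,j} \in H^0(P, \mls F(Md))$. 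For $n \geq M$ the products $s_i^{n-M}\tilde m_{i,j} \in H^0(P, \mls F(nd))$ restrict on $P_{s_i}$ to $s_i^n m_{i,j}$, hence, after trivializing $\mls O_P(nd)$ by $s_i^n$, to the original generators $m_{i,j}$. These $L := \sum_i k_i$ global sections assemble into a surjection of sheaves $\mls O_P^{\oplus L} \twoheadrightarrow \mls F(nd)$ on $P$, which upon restriction to any fiber $P_x$ generates $\mls F(nd)|_{P_x}$ as a sheaf on the projective $k(x)$-scheme $P_x$. Combining this sheaf-generation with Serre vanishing on $P_x$ (using that $\mls O_{P_x}(1)$ is ample) and with relative Serre vanishing for the kernel sheaf on $P$, one upgrades sheaf-generation to spanning of $H^0(P_x, \mls F(nd)|_{P_x})$ as a $k(x)$-vector space.

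The main technical obstacle is to ensure uniformity in $x \in X$ throughout these steps; this is precisely where the hypothesis that $\mls O_P(1)$ be globally ample on $P$ (as opposed to merely $f$-ample) is essential, for it furnishes a \emph{single} affine cover $\{P_{s_i}\}$, a \emph{single} integer $M$, and a \emph{single} finite set of generators $\{m_{i,j}\}$ working for all fibers simultaneously. Finally, the statement of the proposition concerns $\mls O_P(1)$-twists while our argument gives $\mls O_P(d)$-twists; this is bridged by applying the argument to each of the coherent sheaves $\mls F, \mls F(1), \ldots, \mls F(d-1)$ and taking the maximum of the resulting bounds. Crucially, no ample line bundle on $X$ is required, which is essential since $X$ is only assumed to be an algebraic space.
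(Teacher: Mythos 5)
There is a genuine gap, and it sits exactly where the proposition has content: producing global generation over $X$, which carries no ample sheaf. Your $L$ sections $s_i^{n-M}\tilde m_{i,j}\in H^0(P,\mls F(nd))$ do generate $\mls F(nd)$ as an $\mls O_P$-module, but for $f_*\mls F(nd)$ to be globally generated you need global sections whose images span the $k(x)$-vector space $f_*\mls F(nd)\otimes k(x)$ for every $x\in X$. Granting your identification of that space with $H^0(P_x,\mls F(nd)|_{P_x})$, its dimension grows like $n^{\dim P_x}$ (asymptotic Riemann--Roch on the fiber), so a number of sections bounded independently of $n$ cannot span it once $\dim P_x\geq 1$. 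Equivalently, your final ``upgrade'' would require $H^0(P_x,\mls O_{P_x})^{\oplus L}\to H^0(P_x,\mls F(nd)|_{P_x})$ to be surjective, which is impossible for large $n$ by counting dimensions; no amount of Serre vanishing fixes this, because the source of your surjection is the \emph{untwisted} sheaf $\mls O_P^{\oplus L}$ (note also that its kernel varies with $n$, so ``relative Serre vanishing for the kernel'' has no fixed sheaf to apply to). If you repair this in the natural way --- fix one surjection $\mls O_P^{\oplus L}\twoheadrightarrow\mls F(Md)$ and twist to get $\mls O_P((n-M)d)^{\oplus L}\twoheadrightarrow\mls F(nd)$ with kernel $\mls K((n-M)d)$ for a fixed $\mls K$, then kill $R^1f_*$ --- you reduce to showing that $f_*\mls O_P(m)$ is globally generated on $X$ for $m\gg0$, which is the case $\mls F=\mls O_P$ of the very proposition being proved. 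Two secondary problems: the isomorphism $f_*\mls F(n)\otimes k(x)\simeq H^0(P_x,\mls F(n)|_{P_x})$ does not follow from relative Serre vanishing, since $\mls F$ is not assumed flat over $X$ (and even pointwise the required $n$ depends on $x$); and \ref{T:A.1} takes the global-generation property as its \emph{hypothesis}, so it cannot be invoked to start the argument, though the conclusion that $P$ is a scheme is harmless.

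The missing idea is the reduction that breaks the circle above. After replacing $\mls O_P(1)$ by a very ample power one gets an immersion $P\to\mathbf{P}^N_R$, hence --- because $f$ is proper --- a closed immersion $i':P\hookrightarrow\mathbf{P}^N_X$ over $X$, and one may replace $(P,\mls F)$ by $(\mathbf{P}^N_X,i'_*\mls F)$. For $P=\mathbf{P}^N_X$ the sheaf $f_*\mls O_P(m)=H^0(\mathbf{P}^N_R,\mls O_{\mathbf{P}^N_R}(m))\otimes_R\mls O_X$ is free, hence globally generated with no hypothesis on $X$ whatsoever; combining this with the fixed twisted surjection and the vanishing of $R^1f_*\mls K(n-s_0)$ finishes the proof, with no fiberwise cohomology or base change needed. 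Your instinct that no ampleness on $X$ should be required is correct, but the mechanism that replaces it is this embedding into a relative projective space, which your argument does not supply.
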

\begin{proof}
    Note that it suffices to prove the proposition after replacing $\mls O_P(1)$ by $\mls O_P(d)$ for $d>0$.  Indeed if the result holds with $\mls O_P(d)$ then there exists an integer $m_0$ such that for all $m\geq m_0$ the sheaves 
    $$
    f_*\mls F(md), \;f_*\mls F(md+1), \;\dots, \;f_*\mls F(md+(d-1))
    $$
    are generated by global sections.  It follows that for all $n\geq m_0d$ the sheaf $f_*\mls F(n)$ is generated by global sections.  Replacing $\mls O_P(1)$ by $\mls O_P(d)$ we may therefore assume that we have an immersion $i:P\hookrightarrow \mathbf{P}^N_R$ for some $N$ with $i^*\mls O_{\mathbf{P}^N_R}(1)\simeq \mls O_P(1)$.  Let $i':P\hookrightarrow \mathbf{P}^N_X$ be the induced immersion, which is closed since $f$ is proper.  If $g:\mathbf{P}^N_X \to X$ is the projection then we have 
    $$
    f_*\mls F(n)\simeq g_*((i'_*\mls F)\otimes \mls O_{\mathbf{P}^N_X}(n)),
    $$
    from which it follows that it suffices to prove the proposition with $P = \mathbf{P}^N_X$ and $\mls O_P(1) = \mls O_{\mathbf{P}^N_X}(1)$.  Note that in this case $f_*\mls O_{P}(n) = H^0(\mathbf{P}^N_R, \mls O_{\mathbf{P}^N_R}(n))\otimes _R\mls O_X$; in particular, it is generated by global sections for $n>0$.  Now choose an integer $s_0$ such that for all $n\geq s_0$ the sheaf $\mls F(n)$ on $P$ is generated by global sections, and fix a surjection $a:\xymatrix{\mls O_P^{\oplus r}\ar@{->>}[r]& \mls F(s_0)}$ for some $r$, and let $\mls K$ denote the kernel of $a$ so that for all $n\geq s_0$ we have an exact sequence
    $$
    0\rightarrow \mls K(n-s_0)\rightarrow \mls O_P(n-s_0)^{\oplus r}\rightarrow \mls F(n)\rightarrow 0.
    $$
    Now choose $n_0\geq s_0$ so that $R^1f_*\mls K(n-s_0)=0$ for all $n\geq n_0$ (this is possible by \cite[\href{https://stacks.math.columbia.edu/tag/0B5U}{Tag 0B5U}]{stacks-project}).  Then for $n\geq n_0$ the map
    $$
    (f_*\mls O_P(n-s_0))^{\oplus r}\rightarrow f_*\mls F(n)
    $$
    is surjective, and the left side is globally generated.
\end{proof}

\section{Affine GIT over a general base}\label{A:appendixB}

\begin{pg}
Let $R$ be a noetherian ring and let $G$ be a reductive group scheme over $R$
\cite[Expos\'e XIX]{SGA3reductive}.  Recall that this means that $G$ is an affine smooth $R$-group scheme all of whose geometric fibers are connected and reductive.  By \cite{Seshadri} the basic constructions of geometric invariant theory carry through for actions of $G$ on quasi-projective $R$-schemes, at least in the case when $R$ is of finite type over a universally Japanese ring.  
In his theory of adequate moduli spaces, Alper generalizes some of these constructions to arbitrary noetherian rings \cite{Alperadequate}. In this appendix we restate some of Alper's work in traditional GIT language and explain how for some results we can even drop the noetherian hypothesis.
\end{pg}

\subsection{Affine GIT over $R$} Let $X = \Sp (A)$ be an affine $R$-scheme with action of $G$, let $Y$ denote $\Sp (A^G)$, let $\mls X$ denote the stack quotient $[X/G]$, and let
\[
X \xrightarrow{q} \mls X \xrightarrow{p} Y
\]
denote the canonical morphisms.
It follows from \cite[\href{https://stacks.math.columbia.edu/tag/02FZ}{Tag 02FZ}]{stacks-project} that there exists an open substack $\mls X_0\subset \mls X$ such that a geometric point $\bar x\rightarrow \mls X$ factors through $\mls X_0$ if and only if the stabilizer group scheme of $\bar x$ is finite.
  Let $\mls Z\subset \mls X$ be the complement of $\mls X_0$ with the reduced substack structure.

 By \cite[9.1.4]{Alperadequate} the map $p:\mls X\rightarrow Y$ is an adequate moduli space in the terminology of loc. cit., and therefore so is the base change $\mls X_U\rightarrow U$ for any open subset $U\subset Y$.  It follows from this and \cite[5.3.1]{Alperadequate}  that the image $Z\subset Y$ of $\mls Z$ is closed.  Let $U\subset Y$ be the complement of $Z$ and let $\mls U\subset \mls X$ be the preimage.

\begin{prop}\label{P:B.3b} The stack $\mls U$ has finite diagonal.
\end{prop}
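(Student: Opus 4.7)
The plan is to identify $X_U$ with the stable locus of the affine GIT quotient $X \to Y$, meaning the locus of points with closed orbit and finite stabilizer, and then invoke a standard GIT result to deduce finiteness of the diagonal of $\mls U = [X_U/G]$.

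First, I would show that every geometric point $\bar x$ of $X_U$ has finite stabilizer $G_{\bar x}$ and closed orbit $G \bar x$. Let $\bar y \in U$ be the image of $\bar x$. By construction of $Y = \Sp (A^G)$, the point $\bar y$ corresponds to the unique closed orbit $O_{\bar y}$ in the closure $\overline{G \bar x}$. Since $\bar y \in U = Y \setminus Z$, every geometric point of $\mls X$ lying over $\bar y$ has finite stabilizer; this applies both to $\bar x$ itself and to any point of $O_{\bar y}$. Hence $\dim G \bar x = \dim O_{\bar y} = \dim G$, and as $O_{\bar y}$ is a closed subset of the irreducible $\overline{G \bar x}$ of the same dimension, we must have $O_{\bar y} = \overline{G \bar x}$. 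This forces $G \bar x$ to be closed.

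Next, I would compute the diagonal of $\mls U$: pulling back $\Delta \colon \mls U \to \mls U \times \mls U$ along the smooth surjection $X_U \times X_U \to \mls U \times \mls U$ yields the action morphism $\Psi \colon G \times X_U \to X_U \times X_U$, $(g, x) \mapsto (gx, x)$. This $\Psi$ is affine (since $G$ is affine over $R$) and has finite fibers (the nonempty fibers being torsors under the finite stabilizers from the previous step). Since finiteness of morphisms is local for the smooth topology on the target, it suffices to show $\Psi$ is proper; then affine and proper together yield finite, completing the proof.

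The main obstacle is establishing properness of $\Psi$. This is a classical statement in GIT: on the locus of closed orbits with finite stabilizers for a reductive group action on an affine scheme over an algebraically closed field, the action is proper in the sense that the action map is proper (see, for example, Mumford--Fogarty--Kirwan, or Seshadri). In our relative setting over a noetherian ring $R$ with $G$ reductive, this properness follows from Alper's extension of GIT to adequate moduli spaces; alternatively, it can be deduced from the classical case by checking the valuative criterion fiberwise over $\Sp R$, using that both source and target of $\Psi$ are of finite type over $R$ and that the geometric fibers of $\Psi$ are proper by the classical theorem.
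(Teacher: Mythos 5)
Your main line of argument is correct and is essentially the paper's proof rewritten at the level of the action groupoid. Writing $X_U = q^{-1}(\mls U)$, the morphism $\Psi\colon G\times X_U\to X_U\times X_U$ is the base change of the diagonal of $\mls U$ along the smooth cover $X_U\times X_U\to \mls U\times \mls U$, so finiteness of the diagonal is indeed equivalent to finiteness of $\Psi$; your reductions (affine together with proper implies finite, finiteness is smooth-local on the target, nonempty fibers of $\Psi$ are torsors under the finite stabilizers) are all fine, and your verification that geometric points of $X_U$ have finite stabilizer and closed orbit is a correct use of the unique-closed-orbit property of the adequate moduli space $\mls X\to Y$ (it in fact reproves part of \ref{P:B.3}). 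The entire content of the proposition is therefore concentrated in the properness of $\Psi$, i.e.\ in the separatedness of $\mls U$ over $R$. The paper obtains exactly this from \cite[8.3.2]{Alperadequate} (separatedness of $\mls U\to U$, combined with the fact that $U$, being an open subscheme of the affine $Y$, is separated over $R$), and the first justification you offer amounts to the same citation, so that route is sound.

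Your proposed alternative for this key step, however, has a genuine gap: one cannot deduce properness of $\Psi$ by ``checking the valuative criterion fiberwise over $\Sp(R)$'' from properness of the geometric fibers. Properness is not a fiberwise condition --- the test DVRs in the valuative criterion may have generic and closed points lying over different points of $\Sp(R)$, and already the inclusion of the generic point of $\Sp(R)$ for $R$ a DVR has proper (indeed empty or isomorphic) fibers over $\Sp(R)$ without being proper. In the present situation the danger is precisely a family of points of $X_U\times X_U$ over a DVR whose lift to $G\times X_U$ exists generically but degenerates in the special fiber; ruling out such ``vertical'' degenerations of closed orbits is the nontrivial content of GIT over a general base (this is why the work of Seshadri and Alper is needed at all), so this step cannot be reduced formally to the classical statement over an algebraically closed field. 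You should discard the alternative and rely on the citation to Alper.
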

\begin{proof} This follows from \cite[8.3.2]{Alperadequate}. Indeed by loc. cit. the map $\mls U \rightarrow U$ to its coarse moduli space is separated and therefore the diagonal map $\mls U\rightarrow \mls U\times _U\mls U$ is finite. 
 Since the map $\mls U\times _U\mls U\rightarrow \mls U\times _{\Sp (R)}\mls U$ is a closed immersion, being a base change of the closed immersion $U\rightarrow U\times _{\Sp (R)}U$ (since $U/R$ is separated) it follows that the diagonal map $\mls U\rightarrow \mls U\times _{\Sp (R)}\mls U$ is finite.
\end{proof}

\begin{pg}\label{P:B.4} The relationship with affine GIT is as follows.  Recall \cite[Definition 1 on p. 252]{Seshadri} that a geometric point $\bar x\rightarrow X$ is \emph{stable} if the $G$-orbit of $\bar x$ in $X\times _{\Sp (R)}\bar x$ is closed of dimension equal to the dimension of $G_{\bar x}$ (this last condition on the dimension of the orbit is equivalent to the condition that $\bar x$ has zero-dimensional stabilizer). 
\end{pg}

\begin{prop}\label{P:B.3} A geometric point $\bar x\rightarrow X$ is stable if and only if it factors through $X^s:= q^{-1}(\mls U)\subset X$.  In particular, $X^s$ is an open subset of $X$ and the quotient $[X^s/G]$ has finite diagonal with coarse space an open subset of $\Sp (A^G)$.
\end{prop}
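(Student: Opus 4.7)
The plan is to leverage the fundamental property of the affine quotient $X\to Y$, classical in Mumford's GIT and established in the general noetherian setting by Alper \cite{Alperadequate}: each geometric fiber of $X\to Y$ contains a unique closed $G$-orbit, and the closure of every $G$-orbit in that fiber contains this unique closed orbit. Combined with the fact that a $G$-orbit has dimension $\dim G$ precisely when its stabilizer is zero-dimensional (equivalently finite, since $G$ is of finite type), the equivalence in the proposition will reduce to a short dimension comparison inside the fiber of $X\to Y$ through $\bar x$.

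For the forward direction I would assume $\bar x$ stable, so $G\cdot \bar x$ is closed and of dimension $\dim G$, forcing $q(\bar x)\in\mls X_0$. If the image $\bar y:=p(q(\bar x))$ were to lie in $p(\mls Z)$, I could pick $\bar z\to\mls Z$ over $\bar y$ and lift it to $\tilde z\in X$ with positive-dimensional stabilizer, so that $\dim G\cdot\tilde z<\dim G$. The closure $\overline{G\cdot\tilde z}$ would contain the unique closed orbit in the fiber over $\bar y$, which by uniqueness is also $G\cdot \bar x$; but $\dim\overline{G\cdot\tilde z}<\dim G = \dim G\cdot\bar x$, a contradiction. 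Hence $\bar y\in U$ and $\bar x\in X^s$. For the converse, if $\bar x\in X^s$ then $q(\bar x)\in\mls U\subset\mls X_0$, so the stabilizer is finite and $\dim G\cdot\bar x=\dim G$; the unique closed orbit $O\subset\overline{G\cdot\bar x}$ either equals $G\cdot\bar x$ or has strictly smaller dimension, in which case $O$ has positive-dimensional stabilizer and produces a point of $\mls Z$ lying over $\bar y\in U$, contradicting $U\cap p(\mls Z)=\emptyset$. Thus $G\cdot\bar x$ is closed and $\bar x$ is stable.

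For the remaining assertions, $X^s=q^{-1}(\mls U)$ is open in $X$ because $q$ is flat and surjective, hence open, and $[X^s/G]=\mls U$ has finite diagonal by \ref{P:B.3b}. To realize the coarse space as an open of $\Sp(A^G)$, note that the same dimension argument shows that every $G$-orbit in $X^s$ is closed in its fiber over $Y$, and that distinct such orbits over a common geometric point of $U$ would violate uniqueness of the closed orbit. Hence each geometric fiber of $X^s\to U$ is a single $G$-orbit. Together with the facts that $\mls U\to U$ is the restriction of the adequate moduli space $p$ (so universally closed and submersive, with $\mls O_U\xrightarrow{\sim}(p|_{\mls U})_*\mls O_{\mls U}$) and is separated by \cite[8.3.2]{Alperadequate}, this identifies $U$ as the coarse moduli space of $\mls U$.

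The principal obstacle, invoked throughout, is the uniqueness of the closed orbit in each geometric fiber of $p$; this is the essential content of the adequate-moduli/GIT theory for reductive groups acting on affine schemes, and once it is in hand the proposition becomes a short exercise in dimension counting together with the standard formal properties of adequate moduli spaces.
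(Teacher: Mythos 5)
Your proposal is correct and follows essentially the same route as the paper: both arguments rest on the separation property of the adequate moduli space $p:\mls X\to Y$ (your ``unique closed orbit in each geometric fiber, contained in every orbit closure'' is the same content as the paper's citation of \cite[5.3.1 (4)]{Alperadequate}), combined with the dimension count relating orbit dimension to stabilizer dimension. The only difference is that you additionally spell out why $U$ is the coarse space of $\mls U$, which the paper leaves implicit.
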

\begin{proof}
By definition a geometric point $\bar x\rightarrow X$ factors through $q^{-1}(\mls X_0)$ if and only if the stabilizer group scheme is finite. Since $\mls U$ is contained in $\mls X_0$, it therefore suffices to show that a geometric point $\bar x\rightarrow \mls X_0$ factors through $\mls U$ if and only if the orbit of $\bar x$ in $X\times _{\Sp (R)}\bar x$ is closed.

For this we use the property \cite[5.3.1 (4)]{Alperadequate} that 
two geometric $k$-points have the same image in $Y$ if and only if the closures in $X \times_{\Sp(R)} \Sp(k)$ have nonempty intersection. So the orbit of the given point $\bar x$ is closed if and only if any other geometric point $\bar x'$ with the same image in $Y$ contains $\bar x$ in its orbit closure. But since $\bar x$ has finite stabilizer it must be in the orbit of $\bar x'$: if not, the orbit of $\bar x$ would have strictly smaller dimension than the orbit of $\bar x'$, and hence the stabilizer of $\bar x$ would have strictly larger dimesnion than the stabilizer of $\bar x'$. Since the stabilizer of $\bar x$ is finite this is not possible, so $\bar x'$ is in the same orbit as $\bar x$ and in particular has finite stabilizer. 

We have shown that $\bar x \to \mls X_0$ has closed orbit if and only if every other point in the same fiber over $Y$ is also in $\mls X_0$; i.e., if and only if $\bar x$ factors through $\mls U$.
\end{proof}

\begin{rem} Statements \ref{P:B.3b} and \ref{P:B.3}  hold more generally with $G$ geometrically reductive in the sense of \cite{Alperadequate}, with the same proofs.  In particular, by \cite[9.7.6]{Alperadequate} the group scheme $G$ could be an extension of a finite group scheme by a reductive group scheme. 
\end{rem}

\subsection{Twisted affine GIT over $R$} Our next goal is to state and prove the analog of \ref{P:B.3} for stability with respect to a character $\chi: G \to \mathbf{G}_{m, R}$. Let $X = \Spec(A)$ be an affine $R$-scheme with action of $G$. For any integer $m$ let $A_{\chi^m} \subset A$ be the $R$-submodule of elements on which $G$ acts through the character $\chi^m$.

\begin{defn}\label{D:B.7} A geometric point $\bar x\rightarrow X=\Sp (A)$ is \emph{stable with respect to} $\chi$ if for some integer $m>0$ there exists $f\in A_{\chi ^m}$ such that $\bar x^*f\neq 0$, the orbit of $\bar x$ in $D(f)\times _{\Sp (R)}\bar x$ is closed, and the stabilizer group scheme of $\bar x$ is finite.
\end{defn}

\begin{rem}\label{R:a}
A point $\bar x \to X$ is stable with respect to $\chi$ if and only if for some $m>0$ there exists $f \in A_{\chi^m}$ such that $\bar x^*f \neq 0$ and $\bar x$ is a point of $D(f)^s$, the locus of stable points in the sense of \ref{P:B.4} for the affine scheme $D(f)$.
\end{rem}
\begin{rem} If $N>0$ is an integer, a geometric point $\bar x\rightarrow X$ is stable with respect to $\chi $ if and only if it is stable with respect to $\chi ^N$.  The ``if'' direction is immediate, and for the ``only if'' direction observe that if $f\in A_{\chi ^m}$ is a section then $f^N$ is an element of $A_{\chi ^{mN}}= A_{(\chi ^N)^m}$ and $D(f) = D(f^N)$.
\end{rem}

\begin{lem}\label{L:B.9} There exists an open subset $X^{s, \chi}
\subset X$ such that a geometric point $\bar x\rightarrow X$ factors through $X^{s, \chi}$ if and only if $\bar x$ is stable with respect to $\chi$.
\end{lem}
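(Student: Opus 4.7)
The plan is to build $X^{s,\chi}$ as a union of open stable loci coming from Proposition \ref{P:B.3} applied to the basic $\chi$-semi-invariant opens. First I would observe that for any $m > 0$ and any $f \in A_{\chi^m}$, the principal open $D(f) \subset X$ is $G$-invariant: indeed, $G$ acts on $f$ through the character $\chi^m$, so the zero locus of $f$ is $G$-stable, hence so is its complement. Thus $D(f)$ is an affine $R$-scheme with an action of $G$, and Proposition \ref{P:B.3} applied to $D(f)$ produces an open subset $D(f)^s \subset D(f)$ (itself open in $X$) consisting of those geometric points of $D(f)$ whose orbit is closed in $D(f) \times_{\Sp(R)} \bar x$ and whose stabilizer is finite.

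I would then simply define
\[
X^{s,\chi} := \bigcup_{m \geq 1} \bigcup_{f \in A_{\chi^m}} D(f)^s,
\]
which is open in $X$ as a union of open subsets. The characterization of its geometric points is now immediate from Remark \ref{R:a}: a geometric point $\bar x \to X$ lies in $X^{s,\chi}$ if and only if there exist $m > 0$ and $f \in A_{\chi^m}$ with $\bar x^*f \neq 0$ and $\bar x \in D(f)^s$, which is precisely the condition that $\bar x$ be stable with respect to $\chi$ in the sense of Definition \ref{D:B.7}.

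There is no serious obstacle here; the content of the lemma is essentially bookkeeping, reducing $\chi$-stability to ordinary affine GIT stability on the $G$-invariant affine opens $D(f)$ and invoking the already established Proposition \ref{P:B.3}. The only point that needs a moment of care is verifying the equivalence of Remark \ref{R:a} with Definition \ref{D:B.7}, i.e. that requiring the orbit of $\bar x$ to be closed in $D(f) \times_{\Sp(R)} \bar x$ together with finiteness of the stabilizer is the same as requiring $\bar x \in D(f)^s$; but this is exactly the statement of Proposition \ref{P:B.3} applied to the reductive action on $D(f)$.
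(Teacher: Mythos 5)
Your proposal is correct and is essentially identical to the paper's proof: the paper also defines $X^{s,\chi}$ as the union of the loci $D(f)^s$ over all $f\in A_{\chi^m}$, $m>0$, and cites \ref{R:a} and \ref{P:B.3} for openness and the pointwise characterization. The only addition on your part is the (worthwhile, if routine) check that each $D(f)$ is $G$-invariant so that \ref{P:B.3} applies to it.
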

\begin{proof}
Define $X^{s,\chi}$ to be the union of $D(f)^s$ where $f$ ranges over all elements of $A_{\chi^m}$ for all $m>0$. Then $X^{s, \chi}$ has the desired properties by \ref{R:a} and \ref{P:B.3} (which says that $D(f)^s$ is open in $D(f)$).

\end{proof}

 Let $S$ denote the graded ring $S:= \oplus _{m\geq 0}A_{\chi ^m}$, and let $S_{(f)}$ denote the subring of the localization $S_f$ consisting of elements of degree zero. 
 
\begin{lem}\label{L:B.10} For $m>0$ and $f\in A_{\chi ^m}$ the natural map
$$
S_{(f)}\rightarrow (A_f)^{G}
$$
is an isomorphism.
\end{lem}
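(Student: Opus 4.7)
The approach is to check directly that the natural map is bijective. Define it by sending $a/f^k \in S_{(f)}$ (with $a \in A_{\chi^{km}}$) to $a/f^k \in A_f$. This lands in $(A_f)^G$ since $a$ and $f^k$ both transform under the character $\chi^{km}$, so their ratio is $G$-fixed. Injectivity is routine: if $a/f^k = 0$ in $A_f$ then $f^N a = 0$ in $A$ for some $N \geq 0$, and hence already $a/f^k = 0$ in $S_f$.

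The substance of the lemma is surjectivity. Given $b/f^k \in (A_f)^G$, I will use the coaction $\mu \colon A \to A \otimes_R R[G]$ encoding the $G$-action to produce an explicit preimage. Since $f \in A_{\chi^m}$ one has $\mu(f) = f \otimes \chi^m$ in $A \otimes R[G]$, where $\chi^m$ denotes the character viewed as a regular function on $G$. Extending $\mu$ to the localization $A_f$, the $G$-invariance of $b/f^k$ translates into the identity $\mu(b)(f^k \otimes 1) = bf^k \otimes \chi^{km}$ in $A_f \otimes_R R[G]$. Since both sides of this identity already lie in $A \otimes_R R[G]$, their difference is killed by some power of $f \otimes 1$, so there exists $N \geq 0$ with
\begin{equation*}
\mu(b)(f^{k+N} \otimes 1) \; = \; bf^{k+N} \otimes \chi^{km} \quad \text{in } A \otimes_R R[G].
\end{equation*}
Using multiplicativity of $\mu$, the left-hand side can be rewritten as $\mu(bf^{k+N})(1 \otimes \chi^{-(k+N)m})$, yielding $\mu(bf^{k+N}) = bf^{k+N} \otimes \chi^{(2k+N)m}$. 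This says precisely that $bf^{k+N} \in A_{\chi^{\ell m}}$ for $\ell := 2k+N$, so $bf^{k+N}/f^\ell \in S_{(f)}$ is the sought preimage of $b/f^k$.

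The main technical point is the careful character and denominator bookkeeping in the coaction computation. In particular, to pass from the invariance identity in $A_f \otimes_R R[G]$ to the needed identity in $A \otimes_R R[G]$, one uses that the localization map $A \otimes_R R[G] \to A_f \otimes_R R[G]$ has kernel equal to the $(f \otimes 1)$-power torsion; this relies on the flatness of $R[G]$ over $R$, which follows from smoothness of $G$ as a reductive group scheme. No further noetherian hypothesis is needed for the lemma itself.
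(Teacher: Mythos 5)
Your proof is correct, and it reaches the conclusion by a genuinely different route from the paper. The paper's argument is module-theoretic: given $y\in (A_f)^G$ it lifts $f^My$ to $A$, encloses the lift in a finitely generated $G$-invariant submodule $M\subset A$ (so it quietly uses local finiteness of the $G$-representation $A$), observes that the kernel of $M\to R\cdot f^My$ is killed by some $f^N$, and concludes that $f^NM$ is a $G$-submodule isomorphic to the eigenline $R\cdot f^{N+M}y$, hence lands in $A_{\chi^{m(N+M)}}$. You instead work directly with the coaction $\mu\colon A\to A\otimes_RR[G]$: invariance of $b/f^k$ gives $\mu(b)(f^k\otimes 1)=bf^k\otimes\chi^{km}$ in $A_f\otimes_RR[G]$, clearing denominators by $f^N\otimes 1$ promotes this to an identity in $A\otimes_RR[G]$, and multiplicativity of $\mu$ then exhibits $bf^{k+N}$ as an honest $\chi^{(2k+N)m}$-eigenvector. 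Your character and degree bookkeeping checks out ($bf^{k+N}/f^{2k+N}=b/f^k$, with numerator of degree $2k+N$), and both proofs share the same essential mechanism of killing the localization ambiguity by a power of $f$. What your version buys is that it avoids invoking the existence of finitely generated $G$-invariant submodules, replacing that structural input with an explicit Hopf-algebra computation. One small inaccuracy: the statement that the kernel of $A\otimes_RR[G]\to A_f\otimes_RR[G]$ is the $(f\otimes 1)$-power torsion does not rely on flatness of $R[G]$ over $R$; it is immediate from $A_f\otimes_RR[G]\simeq (A\otimes_RR[G])\otimes_AA_f$ and the description of the kernel of any localization map. This does not affect the validity of the argument.
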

\begin{proof}
The injectivity follows from the fact that the map $S\rightarrow A$ is injective.

For the surjectivity, let $y\in (A_f)^G$ be an element, and choose $M>0$ such that $f^My\in  A_f$ is in the image of $A$. Since $G$ is not linearly reductive, an arbitrary element of the preimage of $f^My$ may not be contained in $A_{\chi^M}$, but we can find one such element as follows. Let $M\subset A$ be a finitely generated $G$-invariant submodule surjecting onto $R\cdot f^My$, and let $M'\subset M$ be the kernel of the induced surjection $M\rightarrow R$.  Since the module  $M'$ has image $0$ in $A_f$ there exists an integer $N>0$ such that $f^NM' = 0$.  The image of $M$ under the map $f^N:A\rightarrow A$ is then a $G$-submodule of $A$ mapping isomorphically to $R\cdot f^{N+M}y$ and is therefore contained in $A_{\chi ^{m(N+M)}}$.  It follows that there exists an element $g\in A_{\chi ^{m(N+M)}}$ such that $g/f^{N+M}\in S_{(f)}$ maps to $y$. 
\end{proof}

\begin{lem}\label{L:B.11} Let $\bar x$ be a geometric point of $X^{s, \chi}$.  Then there exists $m>0$ and $f\in A_{\chi ^m}$ not vanishing at $\bar x$ such that every point of $D(f)$ is stable and the graph of the action map 
\begin{align}
G\times D(f)&\rightarrow D(f)\times D(f)\label{eq:action}\\
(g, y)&\mapsto (y, gy)\notag
\end{align}
is proper.
\end{lem}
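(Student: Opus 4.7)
The plan is to find, inside a semi-invariant neighborhood $D(f_0)$ of $\bar x$ provided by stability, a smaller semi-invariant basic open $D(f) \subset D(f_0)^s$; both conclusions of the lemma will then follow from Proposition~\ref{P:B.3} applied to the affine scheme $D(f_0)$.

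By the definition of $X^{s,\chi}$ together with \ref{R:a}, we can choose $m_0>0$ and $f_0 \in A_{\chi^{m_0}}$ with $\bar x \in D(f_0)^s$. Set $W := D(f_0) \setminus D(f_0)^s$, a $G$-invariant closed subscheme of $D(f_0)$, and let $\pi : D(f_0) \to Y_{f_0} := \Sp((A_{f_0})^G)$ denote the associated affine adequate moduli space (\cite[9.1.4]{Alperadequate}). Unpacking the description of $D(f_0)^s = q^{-1}(\mls U)$ from the proof of \ref{P:B.3}, where $\mls U = \mls X_0 \setminus p^{-1}(p(\mls Z))$, one checks directly that $\mls X \setminus \mls U \subseteq p^{-1}(p(\mls Z))$ and hence $\pi(W) \subseteq Z := p(\mls Z)$, a closed subset of $Y_{f_0}$ by \cite[5.3.1]{Alperadequate}. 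Conversely, $\bar x \in q^{-1}(\mls U) \subseteq \pi^{-1}(Y_{f_0}\setminus Z)$ gives $\pi(\bar x) \notin Z$.

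The key step is to produce a semi-invariant $g \in A_{\chi^{Nm_0}}$ with $g(\bar x) \neq 0$ and $g|_W = 0$; then $f := f_0 g \in A_{\chi^{(N+1)m_0}}$ will be nonzero at $\bar x$ and satisfy $D(f) = D(f_0) \cap D(g) \subset D(f_0) \setminus W = D(f_0)^s$. Since $Y_{f_0}$ is affine and $\pi(\bar x) \notin Z$, there exists $h \in (A_{f_0})^G$ with $h|_Z = 0$ (so in particular $h$ vanishes on $W$) and $h(\pi(\bar x)) \neq 0$. By \ref{L:B.10} we may write $h = g/f_0^N$ with $g \in A_{\chi^{Nm_0}}$, and this $g$ has the required properties.

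To conclude: every geometric point $\bar y$ of $D(f) \subset D(f_0)^s$ has finite stabilizer and closed orbit in $D(f_0)_{\kappa(\bar y)}$, so by \ref{D:B.7} (using $f_0$ as witness) it is stable with respect to $\chi$, giving $D(f) \subset X^{s,\chi}$. For the graph assertion, $[D(f)/G]$ is an open substack of $[D(f_0)^s/G]$, which by \ref{P:B.3} has finite diagonal over $\Sp(R)$, and finite diagonal is inherited by open substacks. A standard computation shows that the pullback of the diagonal $[D(f)/G] \to [D(f)/G] \times_R [D(f)/G]$ along the atlas $D(f) \times D(f) \to [D(f)/G] \times [D(f)/G]$ is precisely the graph map $G \times D(f) \to D(f) \times D(f)$, $(g, y) \mapsto (y, gy)$; hence the graph is finite, in particular proper. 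The main obstacle is the verification that $\pi(\bar x) \notin \pi(W)$, which requires carefully tracing through the definition of $D(f_0)^s$ via the maximal substack $\mls U$ in \ref{P:B.3}; everything else is formal.
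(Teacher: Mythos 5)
Your proof is correct and follows essentially the same route as the paper's: use \ref{R:a} to get a first semi-invariant $f_0$ with $\bar x\in D(f_0)^s$, use the fact from \ref{P:B.3} that $D(f_0)^s$ is the preimage of an open subset of $\Sp((A_{f_0})^G)$ to find an invariant $h$ cutting out a smaller neighborhood inside the stable locus, clear denominators via \ref{L:B.10} to obtain the semi-invariant $f$, and then identify the graph of the action as the pullback of the finite diagonal of the quotient stack along the atlas $D(f)\times D(f)$. The only cosmetic differences are that you re-derive the containment $\pi(W)\subseteq Z$ rather than quoting it, and you deduce finiteness of the diagonal of $[D(f)/G]$ from its being an open substack of $[D(f_0)^s/G]$ rather than from $D(f)=D(f)^s$; both are fine.
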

\begin{proof}
    By \ref{R:a} there exists $m>0$ and $f\in A_{\chi ^m}$ such that $\bar x$ is a point of $D(f)^s$.  By \ref{P:B.3} the set $D(f)^s$ is the preimage of an open subset $U\subset \Sp ((A_f)^G)$.  It follows that there exists an element $h\in (A_f)^G$ such that $D(h)\subset D(f)^s$ and $\bar x$ has image in $D(h)$.  Now (using \ref{L:B.10}) choose $N>0$ such that $f^Nh$ lifts to an element $\tilde h\in A_{\chi ^{Nm}}$.  
    Then $f\tilde h$ is in $ A_{\chi ^{(N+1)m}}$ and $D(f\tilde h) = D(h)\subset X$.  That is, for a stable point $\bar x\rightarrow X$ we can find an element $f\in A_{\chi ^m}$ for some $m>0$ for which $D(f)^s = D(f)$.  This implies, in particular that the diagonal of the stack quotient $[D(f)/G]$ is finite.  To conclude the proof note that every point $\bar x \to D(f)$ is stable with respect to $\chi$ by \ref{R:a} and the diagram
    $$
    \xymatrix{
    G\times D(f)\ar[r]\ar[d]& D(f)\times D(f)\ar[d]\\
    [D(f)/G]\ar[r]^-\Delta & [D(f)/G]\times [D(f)/G]}
    $$
is cartesian, where the top horizontal map is the graph of the action and the vertical maps are the projections.
\end{proof}

Observe that there is a natural $G$-invariant map $X^{s,\chi}\rightarrow \Sp (S)-\{0\}$ (where we write $0$ for the closed subscheme defined by the ideal $\oplus _{m>0}A_{\chi ^m}\subset S$).  Passing to the quotient by the $G$ and $\mathbf{G}_m$-actions respectively we get a map $\rho :[X^{s,\chi}/G]\rightarrow \text{Proj}(S)$.

\begin{cor}\label{C:B.12} The stack $[X^{s,\chi}/G]$ has finite diagonal with coarse space an open subscheme of $\text{\rm Proj}(S)$.
\end{cor}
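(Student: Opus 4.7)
My plan is to verify both claims by working locally on $X^{s,\chi}$, using the cover by opens $D(f)$ supplied by \ref{L:B.11}, and then gluing the results.

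For the finite diagonal, I would argue as follows. Given any geometric point $\bar x$ of $X^{s,\chi}$, \ref{L:B.11} produces $m>0$ and $f\in A_{\chi^m}$ not vanishing at $\bar x$ such that $D(f)\subset X^{s,\chi}$ and the graph of the $G$-action on $D(f)$ is proper. This last condition means the diagonal of the stack $[D(f)/G]$ is proper; since every point of $D(f)$ is stable, it has finite stabilizer, so this diagonal is also quasi-finite, hence finite. The open substacks $[D(f)/G]$ obtained in this way form an open cover of $[X^{s,\chi}/G]$, so the diagonal of $[X^{s,\chi}/G]$ is finite.

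Next I would compute the coarse space. On each open $D(f)\subset X^{s,\chi}$ supplied by \ref{L:B.11}, every point is stable, so \ref{P:B.3} applied to the $G$-action on $D(f)=\Sp(A_f)$ identifies the coarse space of $[D(f)/G]$ with an open subscheme of $\Sp((A_f)^G)$; in fact all of $\Sp((A_f)^G)$ since no points are discarded. By \ref{L:B.10} this equals $\Sp(S_{(f)}) = D_+(f)\subset \text{Proj}(S)$. Thus the natural morphism $\rho:[X^{s,\chi}/G]\to\text{Proj}(S)$ restricts on each $[D(f)/G]$ to its coarse moduli map and lands in the basic open $D_+(f)$. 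Letting $f$ range over the sections produced by \ref{L:B.11}, the opens $D_+(f)$ assemble to an open subscheme $U\subset\text{Proj}(S)$ containing the image of $\rho$, and the local coarse moduli morphisms glue to a global one $[X^{s,\chi}/G]\to U$: the gluing is automatic because the identifications $S_{(f)}\simeq(A_f)^G$ of \ref{L:B.10} are induced by the $G$-equivariant inclusion $S\hookrightarrow A$ and are therefore functorial in $f$.

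I do not foresee a substantive obstacle here: the heavy lifting has already been done in \ref{L:B.10} and \ref{L:B.11}. The only bookkeeping point is to note that the $D(f)$ produced by \ref{L:B.11}, as $\bar x$ varies, genuinely cover $X^{s,\chi}$, which is immediate from the construction in \ref{L:B.9} of $X^{s,\chi}$ as a union of such $D(f)^s$. With that, both assertions reduce to the affine GIT result \ref{P:B.3}.
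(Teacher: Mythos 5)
Your proof is correct and follows essentially the same route as the paper: cover $[X^{s,\chi}/G]$ by the opens $[D(f)/G]$ from \ref{L:B.11}, deduce finiteness of the diagonal from properness of the action graph plus finiteness of stabilizers, and identify the coarse space of each piece with $D_+(f)\subset\mathrm{Proj}(S)$ via \ref{P:B.3} and \ref{L:B.10}. The paper packages the gluing step by using the globally defined map $\rho$ and a cartesian square over $D_+(f)$, but this is only a cosmetic difference from your functoriality argument.
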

\begin{proof}
By \ref{L:B.11} the stack $[X^{s, \chi}/G]$ is covered by open substacks of the form $[D(f)/G]$ where $f \in A_{\chi^m}$ has the property that the action map \ref{eq:action} is proper. This implies that $[D(f)/G]$ has finite diagonal and hence $[X^{s,\chi}/G]$ does as well. From the cartesian diagram
$$
\xymatrix{
[D(f)/G]\ar@{^{(}->}[r]\ar[d]& [X^s/G]\ar[d]^-\rho \\
D_+(f)\ar@{^{(}->}[r]& \text{Proj}(S),}
$$
it follows that the coarse space of $[X^{s,\chi}/G]$ is the open subscheme of $\text{Proj}(S)$ given by the union of the open sets $D_+(f)$ as $f$ ranges over elements of $A_{\chi ^m}$ with all points of $D(f)$ stable.

\end{proof}

\begin{rem} The above discussion extends to the setting of a quasi-projective $R$-scheme with a $G$-linearized invertible sheaf.  However, we do not develop this theory since it will not be used in the article.
\end{rem}

\begin{rem}\label{rem:git-detample} By \cite[\href{https://stacks.math.columbia.edu/tag/01MW}{Tag 01MW}]{stacks-project} the scheme $\text{Proj}(S)$ has an ample line bundle which pulls back to some power of the line bundle on $[X^s/G]$ defined by the character $\chi $.  In particular, if $G = \mathbf{GL}_r$ for some $r$ and $\chi $ is a positive power of the determinant then the vector bundle $\mls E$ on $[X^s/\mathbf{GL}_r]$ associated to the standard representation of $\mathbf{GL}_r$ is $\det $-ample on $[X^s/\mathbf{GL}_r]$ in the sense of \ref{D:ampledef}.
\end{rem}

\subsection{Other characterisations of the stable locus}
Observe (as in \cite[6.1]{Mukaimoduli}) that there is a canonical identification
\begin{equation}\label{eq:identify}
\oplus _{m\geq 0}A_{\chi ^m} \simeq (A[u])^G
\end{equation}
where $G$ acts on $u$ via $\chi ^{-1}$.  Geometrically, we can view $A[u]$ as the coordinate ring of the $G$-scheme $\widetilde{X}:= X\times _{\Sp (R)}\mathbf{A}^1_{R, \chi ^{-1}}$, where $\mathbf{A}^1_{R, \chi ^{-1} }$ denotes the affine line with $G$-action by $\chi ^{-1}$.
Consider the projection $\widetilde{X} \to X$ .

\begin{lem}\label{L:B.15} Let $\bar x\rightarrow X$ be a geometric point, and let $\bar y\rightarrow X\times \mathbf{G}_{m}\subset \widetilde X$ be any lift.

(i) There exists $m>0$ and $f\in A_{\chi ^m}$ such that $\bar x^*f\neq 0$ if and only if the closure of the $G$-orbit of $\bar y$ in $\widetilde X_{\kappa (\bar x)}$ does not meet $X_{\kappa (\bar x)}\times \{0\}\subset \widetilde X_{\kappa (\bar x)}$.

(ii) The point $\bar x$ is stable if and only if the $G$-orbit of $\bar y$ in $\widetilde X_{\kappa (\bar x)}$ is closed and the stabilizer group scheme of $\bar y$ is finite.
\end{lem}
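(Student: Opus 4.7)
The plan is to use the $G$-equivariant affine quotient $\widetilde X\to\Sp(S)$ as an adequate moduli space for (i), and for (ii) to pair it with an explicit $G$-equivariant finite map defined on the $G$-invariant open loci of $\widetilde X$ on which a chosen element of $A_{\chi^m}$ does not vanish. For (i), by \cite[9.1.4]{Alperadequate} the map $\widetilde X\to\Sp(S)$ induced by the inclusion $S=(A[u])^G\subset A[u]$ from \eqref{eq:identify} is an adequate moduli space, so by \cite[5.3.1 (4)]{Alperadequate} (quoted in the proof of \ref{P:B.3}), the orbit closure of $\bar y$ in $\widetilde X_{\kappa(\bar x)}$ meets $X_{\kappa(\bar x)}\times\{0\}$ if and only if $\bar y$ has the same image in $\Sp(S)_{\kappa(\bar x)}$ as some point of $X\times\{0\}$. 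The closed subscheme $X\times\{0\}=V(u)$ is $G$-invariant and maps onto the closed subscheme $V(S_+)\subset\Sp(S)$ cut out by the irrelevant ideal $S_+=\oplus_{m>0}A_{\chi^m}$. The invariant generators $f u^m$ of $S_+$ evaluate at $\bar y=(\bar x, c)$ to $f(\bar x)c^m$, so, using $c\in\kappa(\bar x)^\times$, the image of $\bar y$ lies in $V(S_+)_{\kappa(\bar x)}$ if and only if $f(\bar x)=0$ for every $m>0$ and every $f\in A_{\chi^m}$. This gives (i).

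For (ii), I attach to each nonzero $f\in A_{\chi^m}$ with $m>0$ the $G$-invariant open $\widetilde D:=\{f u^m\neq 0\}\subset\widetilde X$ and the morphism
\[
\phi\colon\widetilde D\longrightarrow D(f)\times_{\Sp R}\mathbf{G}_{m, R},\qquad(x, u)\longmapsto(x, f(x)u^m),
\]
where the target carries the $G$-action induced from $D(f)$ together with the trivial action on $\mathbf{G}_m$. The $G$-equivariance of $\phi$ is a direct consequence of the $G$-invariance of $f u^m\in(A[u])^G$, and examining coordinate rings shows $\phi$ is finite locally free of rank $m$ with basis $1, u,\dots, u^{m-1}$. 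Moreover, since $f u^m$ is a $G$-invariant function taking a nonzero constant value on any $G$-orbit in $\widetilde D$, the closure in $\widetilde X_{\kappa(\bar x)}$ of such an orbit stays inside $\widetilde D_{\kappa(\bar x)}$; in particular, for $\bar y\in\widetilde D$ the orbit of $\bar y$ is closed in $\widetilde X_{\kappa(\bar x)}$ if and only if it is closed in $\widetilde D_{\kappa(\bar x)}$.

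For the direction ($\Rightarrow$) of (ii), I choose $(f, m)$ witnessing the stability of $\bar x$ as in \ref{D:B.7}; then $\bar y\in\widetilde D$ and $\phi(G\bar y)=G\bar x\times\{f(\bar x)c^m\}$ is closed in $D(f)_{\kappa(\bar x)}\times\mathbf{G}_m$. Since $\phi$ is finite, the set-theoretic preimage $\phi^{-1}(G\bar x\times\{f(\bar x)c^m\})$ is closed in $\widetilde D_{\kappa(\bar x)}$ and equals $\bigcup_{\zeta\in\bmu_m(\kappa(\bar x))}\zeta\cdot G\bar y$ (with $\bmu_m$ acting on the second coordinate), a finite union of $G$-orbits all of the same dimension as $G\bar y$. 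Since orbit closures of algebraic group actions on algebraic varieties contain, besides the given orbit itself, only orbits of strictly smaller dimension, this forces $\overline{G\bar y}=G\bar y$ in $\widetilde D_{\kappa(\bar x)}$ and hence in $\widetilde X_{\kappa(\bar x)}$; finiteness of $\mathrm{Stab}(\bar y)$ follows from $\mathrm{Stab}(\bar y)\subset\mathrm{Stab}(\bar x)$. For ($\Leftarrow$), the second coordinate of every point of $G\bar y$ is of the form $\chi^{-1}(g)c\in\mathbf{G}_m$, so neither $G\bar y$ nor its (closed) closure meets $X\times\{0\}$; by (i) there exists $f\in A_{\chi^m}$ with $f(\bar x)\neq 0$, hence $\bar y\in\widetilde D$ and $G\bar y$ is closed in $\widetilde D_{\kappa(\bar x)}$. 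Since $\phi$ is closed, $\phi(G\bar y)=G\bar x\times\{f(\bar x)c^m\}$ is closed in $D(f)_{\kappa(\bar x)}\times\mathbf{G}_m$, so $G\bar x$ is closed in $D(f)_{\kappa(\bar x)}$; the restriction $\phi|_{G\bar y}$ is surjective with finite fibers onto its image, so $\dim G\bar y=\dim G\bar x$, and combined with $\dim G\bar y=\dim G$ (from finiteness of $\mathrm{Stab}(\bar y)$) this forces $\mathrm{Stab}(\bar x)$ to be finite as well, completing the proof that $\bar x$ is stable. The subtlest point, which is what drives the final dimension count, is that finiteness of $\mathrm{Stab}(\bar y)=\mathrm{Stab}(\bar x)\cap\ker\chi$ does not a priori force $\mathrm{Stab}(\bar x)$ to be finite (the stabilizer could in principle be one-dimensional with $\chi$ an isogeny on it), and it is precisely the finiteness of $\phi$ restricted to the orbit of $\bar y$ that rules this out.
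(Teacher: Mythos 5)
Your proof is correct, and it reaches the same two pillars the paper rests on --- Alper's separation property for adequate moduli spaces \cite[5.3.1 (4)]{Alperadequate} for (i), and the finite $m$-th power map in the $u$-direction trivialized by $f$ for (ii) --- but it organizes both halves differently in ways worth noting. For (i), you work directly with the adequate moduli space $[\widetilde X/G]\to \Sp(S)$ over $R$ and test membership in $V(S_+)$ against the invariants $fu^m$; this sidesteps the paper's preliminary reduction to an algebraically closed base field (which the paper needs because $\Gamma(\widetilde X,\mls O)^G\to\Gamma(\widetilde X_s,\mls O_{\widetilde X_s})^G$ is only an adequate homeomorphism), since the irrelevant ideal you use is already defined over $R$. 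For (ii), the paper factors the comparison through two steps --- the finite equivariant power map $\widetilde X\to\widetilde X_m$ followed by the equivariant closed section $\sigma:X_f\to(\widetilde X_m)_{fu_m}$ --- whereas you compose these into the single finite locally free map $\phi:(\widetilde X)_{fu^m}\to D(f)\times\mathbf{G}_m$, describe its fibers over the orbit explicitly as a $\bmu_m(\kappa(\bar x))$-translate family of equidimensional orbits, and conclude by the standard fact that boundary orbits drop dimension. Your version buys a more self-contained and slightly shorter argument (no adequate-homeomorphism lifting of invariants, no auxiliary $\widetilde X_m$), and your closing remark correctly isolates the real content of the reverse implication: finiteness of $\mathrm{Stab}(\bar y)=\mathrm{Stab}(\bar x)\cap\ker\chi$ alone does not bound $\mathrm{Stab}(\bar x)$, and it is the finiteness of $\phi$ along the orbit that does. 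The paper's route, by contrast, isolates the base-change issue once and for all at the start, which it then reuses implicitly in \ref{C:B.16}; if you adopt your version you should check that nothing downstream relies on that explicit reduction.
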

\begin{proof}
Let $s\in \Sp (R)$ be the image of $\bar x$.  The map
$$
\Gamma (\widetilde X, \mls O_{\widetilde X})^G\rightarrow \Gamma (\widetilde X_s, \mls O_{\widetilde X_s})^G
$$
need not be surjective, but is an adequate homeomorphism in the sense of \cite[3.3.1]{Alperadequate} (see \cite[5.2.9]{Alperadequate}).  From this and \cite[3.4.5]{Alperadequate} it follows  that if $f\in \Gamma (\widetilde X_s, \mls O_{\widetilde X_s})^G$ is an element then there exists $M>0$  such that $f^M$ lifts to an element of $\Gamma (\widetilde X, \mls O_{\widetilde X})^G$.  Now the formation of $G$-invariants commutes with flat base change \cite[5.2.9 (1)]{Alperadequate} so $\Gamma (\widetilde X_s, \mls O_{\widetilde X_s})^G\otimes _{\kappa (s)}\kappa (\bar x)\simeq \Gamma (\widetilde X_{\bar x}, \mls O_{\widetilde X_{\bar x}})^G$.  It follows that there exists $f\in A_{\chi ^m}$ for some $m>0$ such that $\bar x^*f\neq 0$ if and only if this holds for the base change to $\bar x$.  In other words, it suffices to prove (i), and therefore the entire lemma, in the case when $R$ is an algebraically closed field $k$ and $\bar x$ and $\bar y$ are $k$-points.  In what follows we write simply $x$ and $y$ for these $k$-points. 

With this assumption let us prove (i). 
Let $J \subset A[u]^G$ be the ideal of elements without constant term in $u$ (observe that $J$ is equal to the $G$-invariant part of the ideal $A[u] \cdot u \subset A[u]$) and let $\pi: \widetilde{X} \to \Spec(A[u])^G$ be the projection. There exists $m>0$ and $f \in A_{\chi^m}$ such that $x^*f\neq 0$ if and only if there exists $h \in J$ such that $y^*h \neq 0$ (using \eqref{eq:identify}), if and only if $\pi(y)$ is not in $V(J)$. Since $\pi$ is an adequate moduli space and $X \times \{0\}$ surjects onto $V(J)$, it follows from \cite[5.3.1 (4)]{Alperadequate} that $\pi(y)$ is in $V(J)$ if and only if the orbit closure of $y$ meets $X \times \{0\}$. This proves (i).



To prove (ii), note that by (i) we may assume on either side of the desired equivalence that we have some $m>0$ and $f \in A_{\chi^m}$ for which $x^*f \neq 0$. If we set $\widetilde{X}_m:= X \times \mathbf{A}^1_{R, \chi^{-m}}$ then the $m^{th}$-power morphism $\widetilde{X} \to \widetilde{X}_m$ is finite and $G$-equivariant. Writing $y^m$ for the image of $y$, we claim the orbit $O_y$ is closed and the stabilizer $G_y$ is finite if and only if $O_{y^m}$ is closed and $G_{y^m}$ is finite. To see this
consider the maps
\[
G \xrightarrow{\;\;g \mapsto g \cdot y\;\;} O_y \twoheadrightarrow O_{y^m},
\]
which induce identifications $O_y=G/G_y$ and $O_{y^m} = G/G_{y^m}$.  Since $O_y\rightarrow O_{y^m}$ is finite-to-one we see from this that $G_y\subset G_{y^m}$ has finite index, which implies that $G_y$ is finite if and only if $G_{y^m}$ is finite.  Furthermore, if $O_y$ is closed then so is $O_{y^m}$, being the image under a finite morphism of a closed set. 
 Conversely, if $G_{y^m}$ is finite and $O_{y^m}$ is closed, then we have morphisms
\[
G \xrightarrow{\;\;g \mapsto g \cdot y\;\;} \overline{O}_y \twoheadrightarrow O_{y^m}
\]
where the composition is surjective and a finite morphism of schemes. It follows that $G \to \overline{O}_y$ is a dominant proper map and therefore surjective. This completes the proof of the claim.

On the other hand, the function $f^{-1} \in \Gamma(X_f, \mls O^*_{X_f})$ defines a $G$-equivariant section $\sigma: X_f \to (\widetilde{X}_m)_{fu^m}$ of the projection (here we write $X_f$ in place of $D(f)$ to make clear which scheme we are considering). We may even replace $\sigma$ by a $\mathbf{G}_{m, k}$-multiple and assume $\sigma(x) = y^m$. Since $fu^m$ is a $G$-invariant function on $\widetilde{X}$ its value on $O_{y^m}$ is constant, hence $fu^m$ is nonzero on the orbit closure $\overline{O}_{y^m}$; i.e., $\overline{O}_{y^m}$ is contained in $(\widetilde{X}_m)_{fu^m}$. 

Since $\sigma$ is a $G$-equivariant closed embedding, it follows that $G_x$ is finite and $O_x$ is closed in $X_f$ if and only if $G_{y^m}$ is finite and $O_{y^m}$ is closed in $(\widetilde{X}_m)_{fu^m}$. But by the preceeding paragraph $O_{y^m}$ is closed in $(\widetilde{X}_m)_{fu^m}$ if and only if $O_{y^m}$ is closed in $\widetilde{X}$.

\end{proof}

\begin{cor}\label{C:B.16}
With notation as in \ref{D:B.7} let $p: X \to \Spec(R)$ be the projection. Then a geometric point $\bar x \to X$ is stable if and only if the geometric point $\bar x \to X_{\kappa (\bar x)}:= X\times _{\Sp (R)}\bar x$ is stable (as a geometric point of the $G_{\bar x}$-scheme $X_{\kappa (\bar x)}$).    
\end{cor}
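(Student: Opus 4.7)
The plan is to apply \ref{L:B.15}(ii) in two different guises and compare. That lemma characterises $\chi$-stability purely in terms of the orbit and stabilizer of a lift $\bar y$ living in the fibre $\widetilde{X}_{\kappa(\bar x)}$, so both sides of the claim should reduce to a single geometric condition on that fibre.

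First, I would apply \ref{L:B.15}(ii) to the given $G$-action on $X/R$ with character $\chi$. Choosing any lift $\bar y\to X_{\kappa(\bar x)}\times\mathbf{G}_m\subset \widetilde{X}_{\kappa(\bar x)}$, the lemma states that $\bar x\to X$ is stable if and only if the $G$-orbit of $\bar y$ in $\widetilde{X}_{\kappa(\bar x)}$ is closed and the stabilizer group scheme of $\bar y$ is finite.

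Next I would apply the same lemma to the base change: the $G_{\bar x}$-scheme $X_{\kappa(\bar x)}$ over $\kappa(\bar x)$ with the base-changed character $\chi_{\kappa(\bar x)}$. Here the construction $\widetilde{(-)}$ applied to $X_{\kappa(\bar x)}$ produces $X_{\kappa(\bar x)}\times_{\kappa(\bar x)}\mathbf{A}^1_{\kappa(\bar x),\chi^{-1}_{\kappa(\bar x)}}$, which is canonically identified with $\widetilde{X}_{\kappa(\bar x)}$, and under this identification the induced $G_{\bar x}$-action is the restriction of the original $G$-action. Consequently \ref{L:B.15}(ii) asserts that $\bar x$ is stable as a point of the $G_{\bar x}$-scheme $X_{\kappa(\bar x)}$ precisely when the $G_{\bar x}$-orbit of $\bar y$ is closed and its stabilizer is finite; orbits and stabilizers here agree with those from the previous paragraph, both being subschemes of $\widetilde{X}_{\kappa(\bar x)}$ and subgroup schemes of $G_{\kappa(\bar y)}=G_{\bar x}\times_{\kappa(\bar x)}\kappa(\bar y)$ respectively.

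Since both sides of the claim thus reduce to the same closed-orbit-and-finite-stabilizer condition on $\bar y\in\widetilde{X}_{\kappa(\bar x)}$, the equivalence follows. The only step requiring any care is checking that the construction $\widetilde{(-)}$ commutes with base change on $R$ and that the two actions match, which is immediate from the definitions. In particular there is no substantive obstacle: the real work of reducing stability to a fibrewise geometric criterion has already been carried out in \ref{L:B.15}.
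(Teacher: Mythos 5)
Your proposal is correct and follows exactly the route the paper takes: the paper's proof is the one-line observation that the corollary "follows from the characterization of stable points in \ref{L:B.15} (ii)," and your write-up simply makes explicit the two applications of that lemma (over $R$ and over $\kappa(\bar x)$) together with the compatibility of the $\widetilde{(-)}$ construction with base change. No further comment is needed.
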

\begin{proof}
    This follows from the characterization of stable points in \ref{L:B.15} (ii) (it also follows from the first part of the proof).
\end{proof}

\subsection{Removing the noetherian assumption}\label{S:GIT without noeth}

For technical reasons it is useful to remove the noetherian assumption on $R$.  This assumption is needed to ensure good properties (in particular, finite generation) of the ring of invariants $A^G$  but not for the definition and results about the stack quotient  $[X^s/G]$ as we now explain.

    Let $f:X\rightarrow S$ be a morphism of schemes that is affine and of finite presentation and let $G/S$ be a reductive group scheme over $S$ \cite[XIX, 2.7]{SGA3reductive} acting on $X$. 
Let $\chi: G \to \mathbf{G}_{m, S}$ be a character. 

\begin{defn} A geometric point $\bar x\rightarrow X$ is \emph{stable with respect to} $\chi$ if the corresponding point of the fiber $X\times _S\bar x$ is stable for the action of $G_{f(\bar x)}$ with respect to $\chi_{f(\bar x)}$.
\end{defn}

\begin{rem} In the case when $S = \Sp (R)$ is affine with $R$ noetherian this definition coincides with the notion in \ref{D:B.7} by \ref{C:B.16}.
\end{rem}

\begin{thm}\label{T:GIT} (i) There exists a unique open subscheme $X^{s,\chi}\subset X$ such that a geometric point $\bar x\rightarrow X$ factors through $X^{s,\chi}$ if and only if $\bar x$ is stable.

(ii) The stack quotient $[X^{s,\chi}/G]$ has finite diagonal over $S$.
\end{thm}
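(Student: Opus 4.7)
The plan is to reduce to the noetherian affine case treated in \ref{L:B.9} and \ref{C:B.12}, and then transfer the conclusions to $S$ via standard limit arguments, using that stability is a fiberwise notion by \ref{C:B.16}.

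First, since both assertions are local on $S$ and the stable locus in (i) is uniquely characterized by its geometric points (open subschemes of $X$ being determined by their underlying sets), it suffices to treat the case where $S = \Sp(R)$ is affine; the constructions on an affine cover of $S$ will then automatically glue. Write $R$ as a filtered colimit $R = \mathrm{colim}_\alpha\, R_\alpha$ of finitely generated $\mathbf{Z}$-subalgebras, each of which is noetherian. Since $f:X\to S$ is affine and of finite presentation, standard limit techniques (as in \cite[\href{https://stacks.math.columbia.edu/tag/01ZM}{Tag 01ZM}]{stacks-project}) allow us to descend $X$ to an affine finitely presented $R_{\alpha_0}$-scheme $X_{\alpha_0}$ for some index $\alpha_0$. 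Using that reductive group schemes, group actions, and characters are all of finite presentation, after possibly enlarging $\alpha_0$ we may also descend $G$, its action on $X$, and $\chi$ to a reductive group scheme $G_{\alpha_0}/R_{\alpha_0}$ acting on $X_{\alpha_0}$ together with a character $\chi_{\alpha_0}$.

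Apply \ref{L:B.9} to the noetherian base $\Sp(R_{\alpha_0})$ to obtain an open subscheme $X_{\alpha_0}^{s,\chi}\subset X_{\alpha_0}$ whose geometric points are exactly the stable ones (in the sense of \ref{D:B.7}). Define $X^{s,\chi}\subset X$ to be the preimage of $X_{\alpha_0}^{s,\chi}$ under $X\to X_{\alpha_0}$, which is open. To verify the characterization in (i), let $\bar x\rightarrow X$ be a geometric point. By \ref{C:B.16}, $\bar x$ is stable for $(X,G,\chi)/S$ if and only if the induced geometric point of $X\times_S\bar x$ is stable for the action of $G_{\bar x}$. But this fiber agrees with the corresponding fiber of $X_{\alpha_0}$, so by another application of \ref{C:B.16} this is equivalent to $\bar x$ being stable when regarded as a geometric point of $X_{\alpha_0}$, which in turn is equivalent to factoring through $X_{\alpha_0}^{s,\chi}$, i.e., through $X^{s,\chi}$. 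Uniqueness of $X^{s,\chi}$ follows from the density of geometric points.

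For (ii), by \ref{C:B.12} the stack quotient $[X_{\alpha_0}^{s,\chi}/G_{\alpha_0}]$ has finite diagonal over $\Sp(R_{\alpha_0})$. The stack $[X^{s,\chi}/G]$ is the base change of $[X_{\alpha_0}^{s,\chi}/G_{\alpha_0}]$ along $S\to \Sp(R_{\alpha_0})$, and finiteness of morphisms is preserved under arbitrary base change, so the diagonal of $[X^{s,\chi}/G]$ over $S$ is finite as well. The main (but routine) obstacle is simply the bookkeeping of the noetherian approximation; the genuine content has already been done in the noetherian case, and the fiberwise nature of stability recorded in \ref{C:B.16} makes the passage to the general base transparent.
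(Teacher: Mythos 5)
Your proof is correct and follows essentially the same route as the paper: reduce to affine $S$, descend $(X,G,\chi)$ to a finitely generated $\mathbf{Z}$-algebra by a limit argument, and invoke \ref{L:B.9} and \ref{C:B.12}, with \ref{C:B.16} guaranteeing that stability is compatible with the base change. The only difference is that you spell out the bookkeeping (defining $X^{s,\chi}$ as a preimage and checking its geometric points) that the paper leaves implicit.
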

\begin{proof}
    The assertions are Zariski local on $S$ so it suffices to consider the case when $S = \Sp (R)$ is affine.  Write $R=\text{colim}_iR_i$ as a colimit of rings of finite type over $\mathbf{Z}$.  Since $f$ is of finite presentation, and also using \cite[XIX, 2.5]{SGA3reductive}, there exists an index $i$, an affine scheme $X_i/R_i$ with action of a reductive group scheme $G_i/R_i$ such that $(X, G)$ is obtained by base change along the map $R_i\rightarrow R$ from $(X_i, G_i)$.  This reduces the proof to the case when $S= \Sp (R)$ with $R$ of finite type over $\mathbf{Z}$ and hence to \ref{L:B.9} and \ref{C:B.12}.
\end{proof}

\end{appendix}

\bibliographystyle{amsplain}
\bibliography{bibliography}{}

\end{document}